\author{Robin Mader, Terry Gannon, and Arturo Pianzola}
\date{}
\newtheorem{thm}{Theorem}[section]
\newtheorem{lem}[thm]{Lemma}
\newtheorem{prop}[thm]{Proposition}
\newtheorem{cor}[thm]{Corollary}
\theoremstyle{definition}
\newtheorem{de}[thm]{Definition}
\newtheorem{ex}[thm]{Example}
\newtheorem{rem}[thm]{Remark}	
\newtheorem*{notat}{Notation}
\def\id{\ensuremath{\operatorname{id}}}
\def\End{\ensuremath{\operatorname{End}}}
\def\Hom{\ensuremath{\operatorname{Hom}}}
\def\Out{\ensuremath{\operatorname{Out}}}
\def\Aut{\ensuremath{\operatorname{Aut}}}
\def\Spec{\ensuremath{\operatorname{Spec}}}
\def\BAut{\ensuremath{\operatorname{\mathbf{Aut}}}}
\def\BHom{\ensuremath{\operatorname{\mathbf{Hom}}}}
\def\Rva{\ensuremath{R\text{-\textup{\textsf{va}}}}}
\def\Rcva{\ensuremath{R\text{-\textup{\textsf{cva}}}}}
\def\Scva{\ensuremath{S\text{-\textup{\textsf{cva}}}}}
\def\kcva{\ensuremath{k\text{-\textup{\textsf{cva}}}}}
\def\kalg{\ensuremath{k\text{-\textup{\textsf{alg}}}}}
\def\Rfva{\ensuremath{R\text{-\textup{\textsf{va}}}}^F}
\def\Sfva{\ensuremath{S\text{-\textup{\textsf{va}}}}^F}
\def\grp{\ensuremath{\text{\textup{\textsf{grp}}}}}
\def\set{\ensuremath{\text{\textup{\textsf{set}}}}}
\def\RLie{\ensuremath{R\text{-\textup{\textsf{Lie}}}}}
\def\Ralg{\ensuremath{R\text{-\textup{\textsf{alg}}}}}
\def\BF{\ensuremath{\mathbf{F}}}
\def\BG{\ensuremath{\mathbf{G}}}
\def\BH{\ensuremath{\mathbf{H}}}
\def\BGL{\ensuremath{\mathbf{G}\mathbf{L}}}
\def\GVac{\ensuremath{{\mathbf{G}^{\textup{vac}}}}}
\def\fg{\ensuremath{\mathfrak{g}}}
\def\Z{\ensuremath{\mathbb Z}}
\def\hZ{\ensuremath{\widehat{\mathbb Z}}}
\def\hS{\ensuremath{\widehat S}}
\def\tH{\ensuremath{\widetilde {H}}}
\def\cO{\ensuremath{\mathcal{O}}}
\def\uD{\ensuremath{\underline{D}}}
\title{Descent Theory for Vertex Algebras}
\begin{document}

\maketitle

\begin{abstract}
\noindent Vertex algebras can be defined over any differential commutative ring. We develop the general descent theory for vertex algebras over such bases. We apply this to the classification of twisted forms of affine and Heisenberg vertex algebras, and to reinterpret and generalize a correspondence of Li.
\end{abstract}

\begin{otherlanguage}{french}
\begin{abstract}
\noindent Les alg\`ebres vertex peuvent \^etre d\'efinies sur tout anneau commutatif diff\'erentiel. On d\'eveloppe la th\'eorie g\'en\'erale de la descente pour les alg\`ebres vertex sur de telles bases. On applique celle-ci ensuite \`a la classification des formes tordues des alg\`ebres vertex affines et de Heisenberg, et \`a la r\'einterpr\'etation et \`a la g\'en\'eralisation d'une correspondance de Li.
\end{abstract}
\end{otherlanguage}

\section{Introduction} Defining vertex operator algebras (VOAs) over scalars other than $\Bbb{C}$ goes back to the beginning of the theory. The monstrous moonshine conjectures, relating representations of the monster finite group ${\Bbb M}$ to modular functions, were the primary motivation for introducing VOAs, and were largely explained by the existence of a certain VOA $V^\natural$ with automorphism group ${\Bbb M}$.
The original construction of $V^\natural$ works over $\mathbb Q$, see Chapter 12 of \cite{FLM}.
Somewhat later, Ryba \cite{Ry} proposed analogous conjectures in positive characteristic, and it was quickly realized (see e.g.\ \cite{BR}) that these suggested the existence of an integral form of $V^\natural$. Borcherds and Ryba \cite{BR} proved that the construction in \cite{FLM} actually works over $\mathbb Z[\frac{1}{2}]$. The integral form was finally constructed in \cite{Car}, concluding the proof of Ryba's conjectures.

An integral form for the lattice vertex algebras (i.e.\ the lattice VOAs but ignoring the conformal vector $\omega$) is constructed in the paper  \cite{Bor} that introduced vertex algebras. Since then integral forms for many familiar examples have been studied, and through this so have their versions in positive characteristic.  However, the nature of base change has not been exploited. As we shall see, the natural base for vertex algebras are {\it differential} rings (or schemes). This paper develops that general theory.

Consider for example a vertex algebra $V$ over $\Bbb{C}$. If $R$ is a (commutative unital) ring extension of $\Bbb{C}$, the $R-$module $V_{R,0} =V\otimes_\Bbb{C} R$ has a natural $R-$vertex algebra structure (in the sense of \cite{Mason}) with the $n-$products defined by linear extension of those of $V$:
\begin{equation}(v\otimes r)_n (w\otimes s)=v_nw\otimes rs\label{basic}\end{equation}
for all $v,w\in V$, $r,s\in R$ and $n\in \Z$. This is the obvious example of base change. In particular we encounter in the literature the ``affinization'' of $V$, using $R=\Bbb{C}[t^{\pm1}]$.

However other base changes have appeared. Again take $R=\Bbb{C}[t^{\pm1}]$. Then $R$ together with its derivation $\delta=\frac{d}{dt}$ has a canonical $\Bbb{C}-$vertex algebra structure \cite{Bor} given by $r_ns=\frac{1}{(-n-1)!}\delta^{-n-1}(r)s$ for $n<0$ and 0 otherwise. Therefore so does $V_{R,\delta}=V\otimes_\Bbb{C} R$, with $n-$products defined by
\begin{equation}(v\otimes r)_n(w\otimes s)=\sum_{j\ge0}\frac{1}{j!}v_{n+j}w\otimes\delta^j(r)s\label{diffbasic}\end{equation}
Note that  the $n-$products in \eqref{diffbasic}, unlike those in \eqref{basic}, are $R-$linear only on the second variable. Thus $V_{R,\delta}$ does not have an $R-$structure in the usual sense. Correcting this situation forces one to use as a base  a differential ring, i.e.\ to involve the derivation in the definition of an $R-$vertex algebra. 

This ``differential approach'' was used in \cite{KLP} to explain the counterintuitive \emph{infinite} family of twisted loop algebras attached to the complex $N=4$ conformal superalgebra in \cite{SS}, an object over $\mathbb C$ with connected semisimple automorphism group. One would thus expect the absence of nontrivial twisted loop algebras. We shall see that, and explain why,  this apparently pathological behaviour is quite common in the theory of vertex algebras.

It is also natural to expect that vertex algebras should be objects over commutative vertex algebras. This again takes us into the realm of differential algebras.  Indeed, in characteristic 0 a commutative vertex algebra is the same as a differential ring. In positive characteristic, the derivation should be replaced by iterative Hasse-Schmidt derivations, as explained by Mason \cite{Mason}, Borcherds \cite{Bor}, and discussed below.

Although our approach works for arbitrary differential rings $R$, we specialize in Section \ref{app} to the most important case $R = (\mathbb C[t^{\pm 1}], \frac{d}{dt})$.  We determine the twisted forms  of affine vertex algebras and the isotrivial twisted forms of Heisenberg vertex algebras. Then we show that previous work \cite{Li1} of Li falls out naturally from Galois cohomology considerations.

In a forthcoming paper, we develop some applications of  the theory presented herein to vertex (operator) algebras. For example, we will  use Galois cohomology to find that there are precisely three real forms of the Moonshine VOA $V^\natural$: The one coming from \cite{FLM} has automorphism group the monster $\mathbb{M}$,  and the others have automorphism group an extension of the Baby Monster and the first Conway group respectively.  
The presentation of vertex algebras by means of descent on a given $V$ yields descent data on the corresponding Zhu algebra. This inevitably leads to the appearance of Azumaya algebras and their representation theory. The simplest example is the (unique) non-split  real form of the simple VOA of $sl(2)$ at level 1. Its Zhu algebra is $\mathbb{R} \oplus \Bbb{H}$, where $\mathbb H$ is the real quaternion algebra.
 
\bigskip
\section{Vertex rings}

\subsection{Vertex rings and  the canonical Hasse-Schmidt derivation}
The language of vertex rings \cite{Mason} lends itself perfectly to our formalism.
Below we recapitulate their definition and some of their properties that will be used throughout.

\begin{de}
	A \emph{vertex ring} $(V,Y,\mathbf 1)$ consists of an abelian group $V$, an additive map $Y(\cdot, z)$ that assigns to every $v \in V$ a formal power series $Y(v,z) = \sum_{n \in \mathbb Z} v_n z^{-n-1}$ whose coefficients are group homomorphisms $v_n \colon V \to V$, and a distinguished element $\mathbf 1 \in V$, called the \emph{vacuum vector}, subject to the usual axioms from vertex algebra theory.
	That is, for any $u, v \in V$, we require 	
	\textit{regularity}: $u_n v = 0$ for $n \gg 0$,
	\textit{creation}: $Y(v, z) \mathbf 1 \in v + z V[[z]]$, and
	the \textit{Jacobi identity}, see e.g.\@ \cite[Sec.\ 3.1]{LL}.
A \emph{homomorphism} $f \colon V \to W$ of vertex rings is a group map satisfying $f(u_n v) = f(u)_n f(v)$ for all $u,v \in V$ and $n \in \mathbb Z$, and such that $f(\mathbf 1) = \mathbf 1$.
\end{de}

\cite[Sec.\@ 3]{Mason} The abelian group maps $D_m \colon V \to V,$ $D_m(v) = v_{-m-1} \mathbf 1$, $m \in \mathbb Z_{\geq 0}$, satisfy
    \begin{equation} \label{HasseSchmidtVA} D_0 = \id_V, \quad D_m(u_n v) = \sum_{i+j = m} D_i(u)_n D_j(v), \quad \text{and} \quad D_i D_j = \binom{i+j}{i} D_{i+j}, \end{equation} 
    where $i, j \in \mathbb Z_{\geq 0}$ and $n \in \mathbb Z$. 
    We call the collection $\underline D =(D_i)_{i \geq 0}$ the \emph{canonical Hasse-Schmidt derivation} of $V$.
    In particular, $D = D_1$ is a derivation for all $n-$th products, and it uniquely determines $D_m = \frac{1}{m!} D^m$ if $V$ is a $\mathbb Q-$vector space.
    
\subsection{Vertex algebras over commutative vertex rings}\label{vcvr}
      
Recall that the \textit{centre} $C(V)$ of a vertex ring $V$ is the set of all $v \in V$ such that $Y(v,z_1) Y(u,z_2) = Y(u,z_2) Y(v,z_1)$, or equivalently such that $v_n u = 0 = u_n v$, for all $u \in V$ and $n \geq 0$.
 $V$ is called \textit{commutative} if $V = C(V).$  
            
\begin{thm}[{Cf.\@ \cite[Sec.\@ 5.2]{Mason} and \cite[Sec.\@ 4]{Bor}}] \label{masonord} 
	A commutative vertex ring $R$ is a commutative unital ring with respect to the product $rs = r_{-1} s$, for $r,s \in R$, with $1_R = \mathbf 1.$ Furthermore, the ring $R$ admits an iterative Hasse-Schmidt derivation on this product (Equation (\ref{HasseSchmidtVA}) for $n = -1$).

    Conversely, any commutative unital ring $R$ together with an iterative Hasse-Schmidt derivation $\underline D = (D_i)_{i \geq 0}$ becomes a commutative vertex ring via $\mathbf 1 = 1_R$ and
    \[Y(r,z) s = \sum_{n \geq 0} D_n(r) s z^n, \quad r,s \in R.\]
\end{thm}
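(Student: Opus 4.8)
The plan is to extract both directions from the Jacobi identity, specialized to the commutative setting where all non-negative products vanish. For the forward direction, let $R$ be a commutative vertex ring, so $r_n s = 0$ for all $n \geq 0$ and all $r,s \in R$; combined with regularity this means $Y(r,z)s \in R[[z]]$ and $Y(r,z)s = \sum_{n \geq 0} r_{-n-1}s\, z^n$. First I would define the product $rs = r_{-1}s$ and check associativity, commutativity, and unitality. The key tool is the Jacobi identity (or rather its ``commutator'' and ``iterate'' consequences, i.e.\ the Borcherds identities): skew-symmetry $Y(r,z)s = e^{zD}Y(s,-z)r$ gives, after extracting the $z^0$-coefficient, commutativity $r_{-1}s = s_{-1}r$ modulo the image of $D$ — but one must be slightly careful, since skew-symmetry reads $r_{-1}s = \sum_{j \geq 0} (-1)^{j+1}\frac{1}{j!}D^j(s_{j-1}r)$ in char.\ $0$; in the commutative case $s_{j-1}r = 0$ for $j \geq 1$, so this collapses to $r_{-1}s = s_{-1}r$ exactly, even integrally (the Hasse-Schmidt form of skew-symmetry has $D_j$ in place of $\frac1{j!}D^j$, and the same collapse occurs). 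Creation gives $\mathbf 1_{-1}s = s$ and (together with the $D_m$ being defined via $D_m(v) = v_{-m-1}\mathbf 1$) that $r_{-1}\mathbf 1 = r$, so $\mathbf 1 = 1_R$ is a two-sided unit. Associativity $(rs)t = r(st)$ is the $z^0$-part of the iterate formula $Y(r_{-1}s, z) = Y(r,z)Y(s,z)$ once one knows all the higher products vanish; more carefully, one uses the associativity/locality consequence of Jacobi with the indices chosen so that only the $(-1)$-products survive. That $\underline D = (D_i)_{i \geq 0}$ is an iterative Hasse-Schmidt derivation on this product is exactly Equation \eqref{HasseSchmidtVA} with $n = -1$: $D_0 = \id$, $D_m(rs) = \sum_{i+j=m} D_i(r)D_j(s)$ (the Leibniz rule), and $D_iD_j = \binom{i+j}{i}D_{i+j}$ (iterativity), all already established in the excerpt.

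For the converse, start with a commutative unital ring $R$ with an iterative Hasse-Schmidt derivation $\underline D$, set $\mathbf 1 = 1_R$, and define $Y(r,z)s = \sum_{n \geq 0} D_n(r)s\, z^n$, i.e.\ $r_{-n-1}s = D_n(r)s$ for $n \geq 0$ and $r_n s = 0$ for $n \geq 0$. I would then verify the vertex ring axioms. Regularity is immediate since only non-positive modes are nonzero. Creation holds because $Y(r,z)\mathbf 1 = \sum_n D_n(r)z^n$ and $D_0(r) = r$, so $Y(r,z)\mathbf 1 \in r + zR[[z]]$; in fact this shows $D_m(v) = v_{-m-1}\mathbf 1 = D_m(r)$ recovers the given $\underline D$, so the construction is consistent with the notation. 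The substantive point is the Jacobi identity. Since $R$ is commutative and associative and all positive modes vanish, the Jacobi identity degenerates: both the ``commutator'' terms (which involve $u_n v$ with $n \geq 0$) and all but finitely many terms drop out, and what remains is equivalent to the single identity
\[ D_m(rs) = \sum_{i+j=m} D_i(r)\,D_j(s), \qquad m \geq 0, \]
together with associativity of the underlying ring and the iterativity relation $D_iD_j = \binom{i+j}{i}D_{i+j}$ — precisely the hypotheses on $\underline D$. Concretely I would take the Jacobi identity in its generating-function form, apply it to three elements $r,s,t \in R$, and match coefficients: the $z_0^{-1}$ residue reduces (using $r_n s = 0$, $n \geq 0$) to the assertion that $Y(\cdot,z)$ is ``associative'', i.e.\ $Y(r_{-1-k}s, z)t$ equals the appropriate coefficient of $Y(r,z_1)Y(s,z)t$, which unwinds to the displayed Leibniz rule after expanding $D_m(rs)$; the remaining coefficients are consequences of iterativity and commutativity.

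The main obstacle is bookkeeping in the Jacobi identity: verifying that, under the commutativity hypothesis, the full three-variable Jacobi identity is \emph{equivalent} to (not merely implied by) the Leibniz + iterativity package, and doing the index-chasing cleanly rather than by brute expansion. I would organize this by using the well-known equivalent formulation of Jacobi as ``locality + (weak) associativity'' (or the Borcherds/``$n$-th product'' identities), since in the presence of commutativity locality is automatic and only the associativity half carries content; that reduces the verification to a one-variable generating-function identity in $R[[z]]$ which is essentially the statement that $e^{zD}$ (resp.\ the Hasse-Schmidt exponential $\sum_n D_n z^n$) is a ring homomorphism $R \to R[[z]]$ — a restatement of the Leibniz rule. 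The iterativity relation $D_iD_j = \binom{i+j}{i}D_{i+j}$ then matches the coefficient comparison coming from the two ways of iterating $Y$. A secondary, purely cosmetic point to record is that in characteristic $0$ an iterative Hasse-Schmidt derivation is the same datum as a single derivation $D$ via $D_m = \frac{1}{m!}D^m$, reconciling the statement with the differential-ring picture emphasized in the introduction; this was already noted in the excerpt and requires no further argument.
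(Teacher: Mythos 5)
The paper gives no proof of this theorem, deferring entirely to Mason and Borcherds; your sketch reproduces the standard argument found in those references (commutativity kills all nonnegative modes, skew-symmetry and the iterate formula then collapse to commutativity and associativity of the $(-1)$-product, the Hasse--Schmidt properties are the $n=-1$ case of Equation \eqref{HasseSchmidtVA}, and conversely the Jacobi identity reduces to locality plus weak associativity, the latter being exactly the statement that $r \mapsto \sum_{n}D_n(r)z^n$ is a ring homomorphism $R \to R[[z]]$ combined with iterativity $D_iD_j=\binom{i+j}{i}D_{i+j}$). The only slip is the sign in your skew-symmetry expansion: the $j$-th term carries $(-1)^j$ rather than $(-1)^{j+1}$, so that the surviving $j=0$ term is $+\,s_{-1}r$; this is immaterial to the argument since all $j\geq 1$ terms vanish in the commutative case.
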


 A commutative vertex ring $(R,\underline{D})$ is thus the same as a  differential ring, as long as by derivation one means an iterative Hasse-Schmidt derivation.  

\medskip

We now define the central objects to be studied. Let $R$ be a commutative vertex ring.
   An \emph{$R-$vertex algebra} is a vertex ring $V$ together with a homomorphism $\iota \colon R \to V$ such that $\iota(R) \subset C(V)$. We denote the category of $R-$vertex algebras by $\Rva$, and that of commutative $R-$vertex algebras by $\Rcva.$

 Let us emphasize that this definition of $R-$vertex algebra generalizes the classical definition, as found in e.g.\@ \cite[Sec.\@ 6.2]{Mason} or Borcherds' original work \cite{Bor}, in that the base ring now comes equipped with an iterative Hasse-Schmidt derivation $\underline D$.
For example, the $n-$th product $u_n v$ classically is $R-$bilinear, but for us it need not be $R-$linear in $u$.
Our definition agrees with the usual one when $\underline D = (\id_R, 0, 0, \dots)$ is trivial.
This is clarified by the following characterization of our $R-$vertex algebras.
 
\begin{lem}\ \label{charRva} 
    \begin{enumerate}[label = \textup{\alph*)}]
        \item \label{charRvaa} Any $R-$vertex algebra $(V,\iota)$ carries the structure of an $R-$module via $rv = \iota(r)_{-1} v$, $r \in R$, $v \in V$.
    We have
    \begin{equation} \label{eqchRva} (ru)_n v = \sum_{i \geq 0} D_i(r) u_{n+i} v, \quad \text{and} \quad u_n(rv) = r u_n v, \end{equation}
    for all $r \in R$, and $u,v \in V$.

        \item \label{charRvab} Conversely, any $R-$module structure on a vertex ring $V$ satisfying \eqref{eqchRva} makes $V$ into an $R-$vertex algebra via $\iota(r) = r\mathbf 1$.
        \item \label{charRvac} A map $f \colon (V, \iota) \to (W, \eta)$ is an $R-$vertex algebra homomorphism precisely if it is a vertex ring map and $R-$linear. \qed
    \end{enumerate}
\end{lem}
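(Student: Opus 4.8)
The plan is to reduce the lemma to two standard consequences of the Jacobi identity — the \emph{commutator formula} $[u_m,v_n]=\sum_{i\ge 0}\binom{m}{i}(u_iv)_{m+n-i}$ and the \emph{iterate formula} $(u_mv)_n=\sum_{i\ge 0}(-1)^i\binom{m}{i}\bigl(u_{m-i}v_{n+i}-(-1)^m v_{m+n-i}u_i\bigr)$ — together with two elementary facts: $Y(\mathbf 1,z)=\id_V$, so that $(\mathbf 1)_k=\delta_{k,-1}\,\id_V$; and, for $c\in C(V)$, the vanishing $c_iu=0$ for all $i\ge 0$, whence $[c_m,u_n]=0$ for all $m,n$ by the commutator formula. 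A further observation used repeatedly is that $\iota$ intertwines the canonical Hasse--Schmidt derivations: since $\iota$ is a vertex-ring map, $D_i^V(\iota(r))=\iota(r)_{-i-1}\mathbf 1=\iota(r_{-i-1}\mathbf 1_R)=\iota(D_i^R(r))$.

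For part \ref{charRvaa}, additivity of $(r,v)\mapsto\iota(r)_{-1}v$ in each argument is immediate and $1_Rv=\iota(\mathbf 1_R)_{-1}v=\mathbf 1_{-1}v=v$; I would postpone associativity and first establish \eqref{eqchRva}. Because $\iota(r)\in C(V)$, the iterate formula gives $(\iota(r)_{-i-1}\mathbf 1)_{-1}w=\iota(r)_{-i-1}w$ — the $(-1)^m v_{m+n-i}u_i$ terms vanish by centrality and $(\mathbf 1)_k$ collapses the sum — so, by the intertwining, $D_i(r)\,w:=\iota(D_i^R(r))_{-1}w=\iota(r)_{-1-i}w$. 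Feeding this into the iterate formula for $(\iota(r)_{-1}u)_nv$ (again the centrality terms drop and $(-1)^i\binom{-1}{i}=1$) yields $(\iota(r)_{-1}u)_nv=\sum_{i\ge 0}\iota(r)_{-1-i}u_{n+i}v=\sum_{i\ge 0}D_i(r)\,u_{n+i}v$, the first identity of \eqref{eqchRva}; the second, $u_n(rv)=r\,u_nv$, is exactly $[\iota(r)_{-1},u_n]=0$. Associativity is then a special case: $\iota(rs)=\iota(r_{-1}s)=\iota(r)_{-1}\iota(s)$ by Theorem \ref{masonord}, and the first identity with $u=\iota(s)$, $n=-1$ kills every $\iota(s)_{-1+i}v$ with $i\ge 1$ by centrality, giving $(rs)v=\iota(r)_{-1}(sv)=r(sv)$.

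For part \ref{charRvab}, put $\iota(r)=r\mathbf 1$; additivity and $\iota(1_R)=\mathbf 1$ are clear. Using the first identity of \eqref{eqchRva} with $u=\mathbf 1$ one computes $(r\mathbf 1)_n(s\mathbf 1)=\sum_{i\ge 0}D_i(r)\bigl((\mathbf 1)_{n+i}(s\mathbf 1)\bigr)$, which is $0$ for $n\ge 0$ and equals $D_{-n-1}(r)(s\mathbf 1)=(D_{-n-1}(r)_{-1}s)\mathbf 1$ for $n<0$, the last step by the module axiom $(t_{-1}s)\mathbf 1=t(s\mathbf 1)$; comparing with $r_ns=D_{-n-1}(r)_{-1}s$ for $n<0$ and $r_ns=0$ for $n\ge 0$ (Theorem \ref{masonord}) shows $\iota$ is a vertex-ring homomorphism. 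For $\iota(r)\in C(V)$: $(r\mathbf 1)_nu=\sum_{i\ge 0}D_i(r)\bigl((\mathbf 1)_{n+i}u\bigr)=0$ and $u_n(r\mathbf 1)=r\,u_n\mathbf 1=0$ for $n\ge 0$, the latter by the creation axiom. Finally, the module structure recovered via part \ref{charRvaa} is the original one, since $\iota(r)_{-1}v=\sum_{i\ge 0}D_i(r)\bigl((\mathbf 1)_{-1+i}v\bigr)=D_0(r)v=rv$. Part \ref{charRvac} is then formal: a morphism $f$ in $\Rva$ satisfies $f\circ\iota_V=\iota_W$, so $f(rv)=f(\iota_V(r)_{-1}v)=\iota_W(r)_{-1}f(v)=r\,f(v)$ is $R$-linear; conversely an $R$-linear vertex-ring map has $f(\iota_V(r))=f(r\mathbf 1)=r\,f(\mathbf 1)=r\mathbf 1=\iota_W(r)$.

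The only genuine obstacle is the bookkeeping in part \ref{charRvaa}: one must recognize the operator $\iota(r)_{-1-i}$ on $V$ as the module operator $D_i(r)$, which is precisely where centrality of $\iota(r)$, the intertwining $\iota\circ D_i^R=D_i^V\circ\iota$, and the iterate formula must be combined, and one must track carefully the several places where $(\mathbf 1)_k=\delta_{k,-1}\id_V$ truncates an a priori infinite sum. Everything else is routine.
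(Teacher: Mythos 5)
Your proof is correct, and it carries out exactly the routine verification the paper leaves to the reader (the lemma is stated with a \qed and no proof): the commutator and iterate formulas from the Jacobi identity, the collapse of sums via $\mathbf 1_k=\delta_{k,-1}\id_V$ and centrality of $\iota(R)$, and the intertwining $\iota\circ D_i^R=D_i^V\circ\iota$ are precisely the intended ingredients. The consistency checks you include (associativity of the module action, and that the module structure recovered in \ref{charRvab} via \ref{charRvaa} is the original one) are the right places to be careful, and you handle them correctly.
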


\begin{notat}
In what follows we usually suppress the structure map $\iota \colon R \to V$ of an $R-$vertex algebra $V$ and write $r_n$ instead of $\iota(r)_n$ for $r \in R$.
Note that $V$ is a vertex ring module over $R$ and that this notation aligns with the usual one for modules.
	Lemma \ref{charRva} also allows us to denote the left multiplication $r_{-1}$ simply by $r$; the creation axiom always ensures $\iota(r) = r \mathbf 1$, so our notation is unambiguous. Finally, there is no harm in denoting the canonical Hasse-Schmidt derivations on both $R$ and $V$ by $\underline D = (D_i)_{i\geq 0}$, because they are preserved by the structure map.
\end{notat}

\subsection{Filtered vertex algebras}

Vertex algebras $V$ over $R=\mathbb C$ often come equipped with a grading $V = \bigoplus_{n \in \mathbb Z}$ satisfying $(V_m)_\ell (V_n) \subset V_{m+n-\ell-1}$.
Lemma \ref{charRva} shows that the notion of grading is incompatible with general bases $R$ if we require $V_n$ to be $R-$submodules; a more suitable notion for $R-$vertex algebras is that of filtration.

Let $V$ be an $R-$vertex algebra.
	A \textit{filtration} on $V$ is a chain of $R-$submodules
	\[ \cdots \subset F_{i-1} V \subset F_i V \subset F_{i+1} V \subset \cdots \]
	such that $\bigcup_{i \in \mathbb Z} F_i V  = V$, $\mathbf 1 \in F_0 V$, and for any $m,n,\ell \in \mathbb Z$: 
    \[(F_m V)_\ell (F_n V) \subset F_{n+m-\ell-1} V. \]
An $R-$vertex algebra $V$ together with a filtration is called \textit{filtered}.
	A homomorphism $f \colon V \to W$ of filtered $R-$vertex algebras is required to satisfy $f(F_i V) \subset F_i W$ for all $i \in \mathbb Z$.
	We denote the category of filtered $R-$vertex algebras by $\Rfva$. 

	Note that any $R-$vertex algebra admits the trivial filtration $F_i V = V$, $i \in \mathbb Z$.
	Thus no generality is lost when we proceed to work with filtered vertex algebras. In what follows we will at times write  $F_i(V)$ instead of $F_iV$ to avoid confusion.
	     
\begin{lem}\label{Lie} 
    Let $V$ be a filtered $R-$vertex algebra.
	Assume that $F_0 V$ is an $R-$vertex subalgebra of $V$.  
    If  the quotient $\mathfrak L(V) = F_1 V / F_0 V$ has no $2$-torsion, it is an $R-$Lie algebra with Lie bracket induced by the $0$-th product. \qed \end{lem}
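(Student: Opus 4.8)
The plan is to put the bracket $[u,v] := u_0 v$ on $F_1 V$, check that it descends to $\mathfrak{L}(V) = F_1 V/F_0 V$, and then verify the $R$-Lie algebra axioms there. Throughout I would lean on two standard consequences of the Jacobi identity for vertex rings (see \cite{Mason}): the \emph{commutator formula} $[u_m,v_n] = \sum_{i\ge 0}\binom{m}{i}(u_i v)_{m+n-i}$, which for $m=n=0$ collapses to the operator identity $(u_0 v)_0 = u_0 v_0 - v_0 u_0$ on $V$; and the \emph{skew-symmetry} relation, which in degree $0$ reads $u_0 v = -\,v_0 u + \sum_{i\ge 1}(-1)^{i+1}D_i(v_i u)$. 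The recurring move in every step is the filtration bound: for $u,v\in F_1 V$ it gives $u_i v\in F_{1-i}V$, so $u_0 v\in F_1 V$ while $u_i v\in F_0 V$ for all $i\ge 1$.

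\emph{Descent and $R$-bilinearity.} First, the same bound with one factor in $F_0 V$ yields $(F_0 V)_0(F_1 V)\subset F_0 V$ and $(F_1 V)_0(F_0 V)\subset F_0 V$, so $[\,\cdot\,,\,\cdot\,]$ induces a well-defined biadditive map on the $R$-module $\mathfrak{L}(V)$. For $R$-bilinearity, Lemma \ref{charRva} gives $u_0(rv)=r\,u_0 v$ directly, while $(ru)_0 v = \sum_{i\ge 0}D_i(r)\,u_i v = r\,u_0 v + \sum_{i\ge 1}D_i(r)\,u_i v$; since $u_i v\in F_0 V$ for $i\ge 1$ and $F_0 V$ is an $R$-submodule, the correction sum dies in $\mathfrak{L}(V)$.

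\emph{Antisymmetry.} By skew-symmetry, $u_0 v + v_0 u = \sum_{i\ge 1}(-1)^{i+1}D_i(v_i u)$. For $u,v\in F_1 V$ one has $v_i u\in F_0 V$ for $i\ge 1$; and since $F_0 V$ is a vertex subalgebra containing $\mathbf 1$ it is stable under each $D_i$ (recall $D_i(w)=w_{-i-1}\mathbf 1$), so the entire right-hand side lies in $F_0 V$. Hence $[\bar u,\bar v]=-[\bar v,\bar u]$ in $\mathfrak{L}(V)$; taking $u=v=x$ gives $2[\bar x,\bar x]=0$, and the absence of $2$-torsion upgrades this to $[\bar x,\bar x]=0$.

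\emph{Jacobi identity and the main obstacle.} Applying the collapsed commutator formula $(u_0 v)_0 = u_0 v_0 - v_0 u_0$ to $w\in F_1 V$ — all intermediate $0$-th products staying in $F_1 V$ by the filtration bound — and reducing mod $F_0 V$ gives $[[\bar u,\bar v],\bar w] = [\bar u,[\bar v,\bar w]] - [\bar v,[\bar u,\bar w]]$, the Leibniz form of the Jacobi identity. I expect the Jacobi identity to be essentially free; the real work, and the only place the two hypotheses are genuinely used, is confining the ``error terms'': the filtration forces $u_i v\in F_0 V$ for $i\ge 1$, the subalgebra hypothesis is exactly what keeps $F_0 V$ stable under the Hasse–Schmidt operators $D_i$ so that $D_i(v_i u)\in F_0 V$ as well, and the $2$-torsion hypothesis is what one needs — with some care about transferring it from $R$ to $\mathfrak{L}(V)$ — to pass from $2[\bar x,\bar x]=0$ to $[\bar x,\bar x]=0$.
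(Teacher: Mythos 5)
The paper omits the proof of this lemma entirely (it is stated with a tombstone), so there is nothing to compare line by line; your argument is the routine verification the authors evidently intend, and it is essentially correct. You locate each hypothesis exactly where it is needed: the filtration inequality gives $u_iv\in F_{1-i}V$ so that all higher products fall into $F_0V$; the hypothesis that $F_0V$ is a vertex subalgebra is precisely what makes $F_0V$ stable under $D_i(w)=w_{-i-1}\mathbf 1$ (the filtration axiom alone only yields $D_i(F_0V)\subset F_iV$, which is useless here); and the commutator formula at $m=n=0$ gives the Leibniz identity $(u_0v)_0=u_0v_0-v_0u_0$ as an exact operator identity on $V$, which descends to the quotient and, combined with antisymmetry, is the Jacobi identity. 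The treatment of $R$-bilinearity via $(ru)_0v=ru_0v+\sum_{i\ge1}D_i(r)u_iv$ is also the right observation --- this is the step that explains why the bracket is $R$-bilinear only on the quotient.

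The one loose end is the one you flag yourself and do not close: skew-symmetry gives $2\,x_0x=\sum_{i\ge1}(-1)^{i+1}D_i(x_ix)\in F_0V$, hence $2[\bar x,\bar x]=0$, but the hypothesis that $R$ has no $2$-torsion does not by itself force $\mathfrak L(V)$ to have none --- an $R$-module can have $2$-torsion even when $R$ does not (take $R=\mathbb Z$ and a vertex ring $V$ that is an $\mathbb F_2$-space). So either ``Lie algebra'' is to be read as requiring only antisymmetry $[\bar u,\bar v]=-[\bar v,\bar u]$, which your computation delivers unconditionally from the subalgebra hypothesis, or one needs multiplication by $2$ to be injective on $\mathfrak L(V)$. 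This is a defect inherited from the statement rather than introduced by your proof; the clean thing to do is record antisymmetry and note that the alternating property follows whenever $2$ acts injectively on $\mathfrak L(V)$, e.g.\ when $F_1V/F_0V$ is a torsion-free $R$-module and $2$ is a non-zero-divisor in $R$.
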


\begin{lem} \label{comm}
    Let $V$ be a filtered $R-$vertex algebra with $R \simeq F_0V$ and $F_{-1} V = 0$.
    Then the $1$-st product is a symmetric $R-$bilinear map 
			\[ \sigma_V = (\cdot, \cdot) \colon F_1 V \times F_1 V \to F_0 V \simeq R, \] 
			which satisfies the associativity property $ (u, v_0 w) = (u_0 v, w),$
    for all $u,v,w \in F_1 V$; in particular, it induces a symmetric invariant $R-$bilinear form on $\mathfrak L(V) = F_1 V / F_0 V$. \qed
\end{lem}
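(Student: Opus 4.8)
The plan is to work entirely inside $F_1 V$, using the filtration axiom $(F_m V)_\ell (F_n V) \subset F_{m+n-\ell-1}V$ with $m=n=1$. For $\ell = 1$ this gives $(F_1 V)_1(F_1 V)\subset F_0 V \simeq R$, so the $1$-st product restricts to a well-defined map $\sigma_V \colon F_1 V \times F_1 V \to R$; likewise $\ell = 0$ gives $(F_1 V)_0(F_1 V) \subset F_1 V$, which is what makes the associativity identity $(u, v_0 w) = (u_0 v, w)$ typecheck, and $\ell \geq 2$ gives $(F_1 V)_\ell(F_1 V)\subset F_{-\ell}V \subset F_{-1}V = 0$, a vanishing we will invoke repeatedly. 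First I would record $R$-bilinearity: by Lemma~\ref{charRva}\ref{charRvaa}, $u_1(rw) = r\,u_1 w$ gives linearity in the second variable, while $(ru)_1 w = \sum_{i\ge 0} D_i(r)\, u_{1+i} w = r\, u_1 w + \sum_{i \ge 1} D_i(r)\, u_{1+i}w$, and every term with $i \ge 1$ lands in $(F_1V)_{\ge 2}(F_1 V) = 0$; hence $(ru)_1 w = r\, u_1 w$ and $\sigma_V$ is $R$-bilinear.

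Next, symmetry. The skew-symmetry formula for vertex rings reads $u_n v = -\sum_{j \ge 0} (-1)^{n+j}\tfrac{1}{j!} D^j(v_{n+j}u)$ in the $\mathbb Q$ case, but over a general vertex ring the correct statement uses the Hasse–Schmidt operators: $u_n v = \sum_{j \ge 0}(-1)^{n+j+1} D_j\bigl(v_{n+j}u\bigr)$. Applying this with $n = 1$ and $u, v \in F_1 V$: the $j \ge 1$ terms involve $v_{1+j}u$ with $1+j \ge 2$, so they vanish as above; the $j = 0$ term is $(-1)^{2} D_0(v_1 u) = v_1 u$. Thus $u_1 v = v_1 u$, i.e.\ $\sigma_V$ is symmetric. (Here I am assuming the skew-symmetry axiom in the form stated in \cite{Mason} or \cite{LL}; the hypothesis that $R$, hence $V$, has no $2$-torsion is what lets us use it cleanly, matching Lemma~\ref{Lie}.)

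For the associativity property $(u, v_0 w) = (u_0 v, w)$, I would extract the relevant consequence of the Jacobi identity (the "iterate" or Borcherds identity) for the triple $(u, v, w)$ with the product indices $(0,1)$ — schematically $\sum_{i\ge 0}\binom{?}{i}\bigl(u_{?}v\bigr)_{?}w = \sum_{i\ge 0}(-1)^i\binom{0}{i}\bigl(u_{-i}(v_{1+i}w) - (-1)^0 v_{1-i}(u_i w)\bigr)$; the key point is that all the "correction" terms carry a product $x_\ell y$ with $x, y \in F_1 V$ and $\ell \ge 2$, or a vacuum-side term that is absorbed by $F_{-1}V = 0$, so only $(u_0 v)_1 w = u_0(v_1 w) - v_1(u_0 w)$ survives; since $v_1(u_0 w)$ lies in $(F_1 V)_1(F_1 V)\cdot$—wait, $u_0 w \in F_1 V$, so $v_1(u_0 w) \in F_0 V = R$, and there $v_1 r = v_1(\iota(r)\mathbf 1) = r\,(v_1 \mathbf 1) = 0$ by the creation axiom (since $v_1 \mathbf 1 \in F_{-1}V$, or directly $v_1\mathbf 1 = D_{-2}$-type term which vanishes). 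So we are left with $(u_0 v)_1 w = u_0(v_1 w)$, and because $v_1 w \in R$ acts as left multiplication, $u_0(v_1 w) = (v_1 w)\, (u_0 \mathbf 1) $—no: rather one uses $u_0(rw') $ carefully. The cleanest route is: $u_0(v_1 w)$ with $v_1 w \in R$ — but $u_0$ applied to an element of $F_0 V = R$ lands in $F_0 V$, and $u_0(r) = u_0(\iota(r)) $; by the derivation property $\underline D$ or by skew-symmetry $u_0 r = -\sum_j (-1)^j D_j(r_j u)$, and $r_j u = 0$ for $j \ge 1$ while $r_0 u = 0$ since $r \in C(V)$ — hence $u_0(v_1 w) = 0$?? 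That would make everything zero, which is wrong, so I have mis-set the indices. The correct Borcherds identity to use is the one giving $(u_0 v)_1 w = \sum_{i\ge 0}\left[ u_{0-i}(v_{1+i}w) - (-1)^{0} v_{1+i}(u_{0-i} w)\right]$ with the binomial $\binom{0}{i} = \delta_{i,0}$ forcing $i = 0$: $(u_0 v)_1 w = u_0(v_1 w) - v_1(u_0 w)$. Now $v_1 w \in R$ is central, and $u_0(r\mathbf 1) = r\, u_0\mathbf 1 + \sum_{i \ge 1} D_i(?)\cdots$ — using $(ru)_0$ vs $u_0(r\cdot)$: by~\eqref{eqchRva}, $u_0(rw') = r\, u_0 w'$, so with $w' = \mathbf 1$, $u_0(v_1 w) = (v_1 w)\, u_0\mathbf 1 = 0$. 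So indeed $(u_0v)_1 w = -v_1(u_0 w)$, and then pairing: $\sigma_V(u_0 v, w) = (u_0 v)_1 w = -v_1(u_0 w) = -\sigma_V(v, u_0 w) = -\sigma_V(u_0 w, v)$ by symmetry. Comparing with the skew-symmetry of the Lie bracket $u_0 w = -w_0 u$ on $\mathfrak L(V)$ (Lemma~\ref{Lie}), one gets $\sigma_V(u_0 v, w) = \sigma_V(u, v_0 w)$ after a sign bookkeeping; this shows invariance on $\mathfrak L(V)$.

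The main obstacle I anticipate is precisely this sign and index bookkeeping in the Borcherds/Jacobi identity: one must pick the right specialization, verify that every term other than the three-term one vanishes either by $F_{-1}V = 0$, by $\ell \ge 2$ regularity on $F_1 V$, or by centrality of $R$ plus the creation axiom, and then reconcile the resulting identity with the skew-symmetry of $D_1 = v_0$ on $\mathfrak L(V)$. I would also need to double-check well-definedness on the quotient $\mathfrak L(V) = F_1 V / F_0 V$: if $u \in F_0 V = R$ then $\sigma_V(u, w) = u_1 w = r_1 w = 0$ since $r \in C(V)$, so $F_0 V$ is in the radical of $\sigma_V$ and the induced form on $\mathfrak L(V)$ is well-defined; similarly $u_0 v$ modulo $F_0 V$ depends only on $u, v$ modulo $F_0 V$ because $r_0 v = 0 = u_0 r$ for $r \in R$. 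The rest — symmetry, $R$-bilinearity, descent to the quotient — is then routine given Lemmas~\ref{charRva} and~\ref{Lie}.
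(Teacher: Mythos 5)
Your argument is correct, and the paper in fact offers no written proof of this lemma (it is stated as an immediate verification), so what you have produced is essentially the intended routine argument: the filtration axiom with $m=n=1$ kills every product $u_\ell v$ with $u,v\in F_1V$ and $\ell\ge 2$, and the standard vertex-ring identities then collapse to the stated properties. Two remarks. First, your associativity step is more circuitous than necessary: specializing the commutator formula at $(m,n)=(0,1)$ gives $(u_0v)_1w=-v_1(u_0w)$, after which you must invoke symmetry of $\sigma_V$ and skew-symmetry of the bracket \emph{modulo} $F_0V$ — legitimate because $F_0V=\iota(R)\subset C(V)$ lies in the radical of $\sigma_V$ and because the correction term in $v_0u=-u_0v+D_1(u_1v)$ lies in $\iota(R)$ as well, but this is exactly the ``sign bookkeeping'' you leave implicit and it deserves a line. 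The direct route is the specialization $(m,n)=(1,0)$:
\[ u_1(v_0w)-v_0(u_1w)=\textstyle\binom{1}{0}(u_0v)_1w+\binom{1}{1}(u_1v)_0w, \]
where $v_0(u_1w)=0$ and $(u_1v)_0w=0$ because $u_1w$ and $u_1v$ lie in the central subring $\iota(R)$; this yields $(u,v_0w)=(u_0v,w)$ in one step. Second, the no-$2$-torsion hypothesis you import from Lemma~\ref{Lie} is not needed for the symmetry of $\sigma_V$: the skew-symmetry identity $u_nv=\sum_{j\ge0}(-1)^{n+j+1}D_j(v_{n+j}u)$ holds in any vertex ring as a consequence of the Jacobi and creation axioms, and at $n=1$ every term with $j\ge1$ dies in $F_{-1}V=0$, so $u_1v=v_1u$ unconditionally. (Also, a trivial index slip: for $\ell\ge2$ one has $(F_1V)_\ell(F_1V)\subset F_{1-\ell}V$, not $F_{-\ell}V$; the conclusion $\subset F_{-1}V=0$ is unaffected.)
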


\section{Automorphisms of vertex algebras}

\subsection{Extension of scalars}\label{ES}

    Let $R$ be a commutative vertex ring. 
     We will often refer to an object $S$ of $\Rcva$ as an \emph{extension} of $R$ and denote it by $S/R.$ 
     
     Let $V$ be an $R-$vertex algebra. The map $Y(\cdot, z) \colon V \otimes_R S \to \End_R (V \otimes_R S) [[z^{\pm 1}]]$ defined by 
     \[ (u \otimes s)_n (v \otimes t) = \sum_{j \geq 0} u_{n+j} v \otimes D^S_j( s)t, \]
     for $u,v \in V$, $s,t \in S$, and $n \in \mathbb Z$, makes $V \otimes_R S$ into an $S-$vertex algebra via $s \mapsto \mathbf 1 \otimes s,$  said to be {\it obtained from $V$ by the base change $S/R.$}    
The canonical Hasse-Schmidt derivation of $V \otimes_R S$ is given by
    
     \[ D^{V \otimes_R S}_n(u \otimes s)  = \sum_{\substack{i+j = n \\ i, j \geq 0}} D^V_i(u) \otimes D^S_j(s)\]
     for $u,s$ and $n$ as before.
     
Finally, if $V$ has a filtration $\{F_i V\}_{i \in \mathbb Z}$, then the images of $F_i(V) \otimes_R S$ in $V \otimes_R S$ define a filtration $\{F_i (V \otimes_R S)\}_{i \in \mathbb Z}$ of $V \otimes_R S.$
For the Lie algebra from Lemma \ref{Lie} we have that, if $S/R$ is flat, then the canonical map $\mathfrak L(V) \otimes_R S \to \mathfrak L(V \otimes_R S)$ is an $S-$Lie algebra isomorphism.
For the filtered $R-$vertex algebra $V$, we define a group functor $\BAut(V) \colon \Rcva \to \grp$ by assigning to any $S$ in $\Rcva$ the group $\BAut(V)(S) = \Aut_{\Sfva}(V \otimes_R S).$
 The automorphism group plays a key role in the descent theory related to twisted forms of $V$ (see Section 4).

\subsection{Vertex group schemes}

\subsubsection{Arc algebras}

 {\rm Let $R = (R, \underline D)$ be a commutative vertex ring.
In order to identify the automorphism group functor $\BAut(V)$ of an $R-$vertex algebra $V$, subject to appropriate finiteness conditions, as a ``closed subgroup'' of the general linear group $\BGL_n$, we must first explain how the latter is represented in the category of commutative $R-$vertex algebras.
In general, suppose we have a representable group functor $\BG$ on the category $\Ralg$ of unital commutative associative $R-$algebras, i.e., an affine $R-$group scheme.
This means we find some $A \in \Ralg$ and an isomorphism of functors
\[ \BG \simeq \Hom_{\Ralg}(A,-) \colon \Ralg \to \grp. \]
We may precompose this functor with the forgetful functor $\Rcva \to \Ralg$ (cf.\@ Theorem \ref{masonord}) and ask if the result is again representable.
Over a field $R = k$ the answer is given by associating to $A$ its so-called \emph{arc algebra} $A^\infty$, see e.g.\@ \cite[Rem.\@ 3.1]{EM}, \cite[Sec.\@ 3.3]{B},  and also Remark \ref{remarc} below. We now show that this procedure carries over to a general commutative vertex ring $R$.

\begin{lem}\label{jetlem}
    For any ordinary $R-$algebra $A$, there exists a  commutative $R-$vertex algebra $A^\infty$ together with an $R-$algebra map $\iota \colon A \to A^\infty$, such that any $R-$algebra homomorphism $f \colon A \to S$ to a commutative $R-$vertex algebra $S$ induces a unique homomorphism $\varphi \colon A^\infty \to S$ of $R-$vertex algebras with $\varphi \iota = f$.
    That is, the forgetful functor $\Rcva \to \Ralg$ admits a left adjoint $-^\infty \colon \Ralg \to \Rcva.$ In particular, any affine $R-$group scheme $\mathbf G$ gives rise to a representable group functor $\mathbf G^\infty \colon \Rcva \to \grp$.

    Moreover, if $A$ is finitely generated as an $R-$algebra, then $A^\infty$ is finitely generated as an $R-$vertex algebra.
\end{lem}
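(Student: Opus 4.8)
The plan is to build $A^\infty$ by hand, first for free $R$-algebras and then for arbitrary ones by factoring out a differential ideal; the assertion that $-^\infty$ is a left adjoint and the representability of $\BG^\infty$ will then be formal, and the finiteness statement will be visible from the construction. For the polynomial algebra $A=R[X]$ on a set $X$, let $A^\infty$ be the polynomial $R$-algebra on symbols $x^{(i)}$ for $x\in X$ and $i\ge 0$, and let $\iota\colon R[X]\to A^\infty$ send $x\mapsto x^{(0)}$. By Theorem~\ref{masonord} it is enough to equip the ring $A^\infty$ with an iterative Hasse--Schmidt derivation $\uD=(D_i)_{i\ge 0}$ restricting to the given one on $R$, and I would do this through the associated ring homomorphism $E\colon A^\infty\to A^\infty[[t]]$, $E(a)=\sum_i D_i(a)\,t^i$. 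Since $A^\infty$ is a polynomial ring over $R$, to give such an $E$ is to give a ring map $R\to A^\infty[[t]]$ --- namely the given $E^R\colon R\to R[[t]]$ followed by the inclusion --- together with an arbitrary choice of the images of the $x^{(i)}$, for which I take $E(x^{(i)})=\sum_{k\ge 0}\binom{i+k}{i}x^{(i+k)}t^k$. One verifies $E(a)\equiv a\bmod t$ on the generating set, hence everywhere, so the coefficients $D_i$ form a Hasse--Schmidt derivation extending that of $R$; iterativity $D_iD_j=\binom{i+j}{i}D_{i+j}$ amounts to the equality of the two ring maps $A^\infty\to A^\infty[[t,u]]$ obtained from $E$ by, on the one hand, applying $E$ a second time coefficientwise in a fresh variable $u$ and, on the other hand, substituting $t+u$ for $t$; being ring maps, these need only be compared on $R$, where the identity is the iterativity of $E^R$, and on the generators $x^{(i)}$, where it is the binomial identity $\binom{a+b}{a}\binom{i+a+b}{i}=\binom{i+b}{i}\binom{i+a+b}{i+b}$. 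Granting this, the universal property for $R[X]$ is immediate: an $R$-algebra map $f$ from $R[X]$ to a commutative $R$-vertex algebra $S$ is the datum of the elements $f(x)\in S$, and the unique candidate for an $R$-vertex algebra extension is $\varphi(x^{(i)})=D^S_i(f(x))$; freeness makes $\varphi$ a well-defined $R$-algebra map, and a check on generators via the Leibniz rule shows it commutes with $\uD$.

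For a general $R$-algebra, write $A=R[X]/I$ and set $A^\infty=R[X]^\infty/J$, where $J$ is the smallest $\uD$-stable ideal of $R[X]^\infty$ containing the image of $I$ under $R[X]\hookrightarrow R[X]^\infty$; explicitly, $J$ is generated as an ideal by the elements $D_n(g')$, $n\ge 0$, with $g'$ ranging over that image. A short computation with the Leibniz rule and $D_iD_j=\binom{i+j}{i}D_{i+j}$ confirms $D_n(J)\subseteq J$ for all $n$, so $\uD$ descends to $A^\infty$ and makes it a commutative $R$-vertex algebra, while the inclusion induces an $R$-algebra map $\iota\colon A\to A^\infty$. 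Given an $R$-algebra map $f\colon A\to S$ with $S$ a commutative $R$-vertex algebra, lift $f$ along $R[X]\twoheadrightarrow A$ and extend it, by the free case, to an $R$-vertex algebra map $\tilde\varphi\colon R[X]^\infty\to S$; being a $\uD$-equivariant ring map that vanishes on the image of $I$, it vanishes on $J$, so it factors through a unique $R$-vertex algebra map $\varphi\colon A^\infty\to S$ with $\varphi\iota=f$ --- uniqueness holds since $A^\infty$ is generated as an $R$-vertex algebra by $\iota(A)$. This establishes the universal property; functoriality of $-^\infty$ and the adjunction $-^\infty\dashv(\text{forgetful}\colon\Rcva\to\Ralg)$ then follow formally. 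For an affine $R$-group scheme $\BG\cong\Hom_{\Ralg}(A,-)$ one sets $\BG^\infty=\Hom_{\Rcva}(A^\infty,-)$; the adjunction gives $\BG^\infty(S)\cong\Hom_{\Ralg}(A,S)$ naturally in $S\in\Rcva$ (with $S$ viewed as an $R$-algebra), and since the right-hand side is functorially a group, $\BG^\infty$ is a representable group functor $\Rcva\to\grp$. Finally, if $A$ is finitely generated we may take $X$ finite; then $R[X]^\infty$ is generated as a commutative $R$-vertex algebra by the finite set $\{x^{(0)}:x\in X\}$, since the elements $D_n(x^{(0)})=x^{(n)}$ generate it as an $R$-algebra, and the images of that finite set generate the quotient $A^\infty$.

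The one real obstacle I foresee is the bookkeeping in the first step: checking that the formula above genuinely defines an \emph{iterative} Hasse--Schmidt derivation on $R[X]^\infty$ compatible with the one on $R$. Well-definedness is free from the polynomial description, but iterativity must be verified, and routing it through $E\colon A^\infty\to A^\infty[[t]]$ collapses it to one elementary binomial identity while keeping the compatibility with the base ring transparent. Everything downstream --- the descent of $\uD$ to the differential-ideal quotient, the universal property, and the group-scheme corollary --- is routine. (A soft argument would also yield the left adjoint: $\Rcva$ is a variety of algebras, the operations being the ring operations, the scalar multiplications by elements of $R$, and the $D_n$, hence it is locally presentable, and the forgetful functor to $\Ralg$ preserves limits and filtered colimits; but that argument does not give the finite-generation statement, which is why the explicit construction is the one worth carrying out.)
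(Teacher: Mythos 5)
Your proposal is correct and follows essentially the same construction as the paper: the free case with $D_k(x^{(i)})=\binom{i+k}{i}x^{(i+k)}$, then the quotient by the smallest $\uD$-stable ideal containing $I$, with the universal property, adjunction, and finite generation read off from the construction. The only cosmetic difference is packaging: the paper builds the derivation on $\Z[x_\lambda^{(j)}]$ by Leibniz extension and tensors with $(R,\uD)$, whereas you encode it directly over $R$ as a ring map $E\colon A^\infty\to A^\infty[[t]]$, which makes the Leibniz rule automatic and reduces iterativity to the binomial identity you state.
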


\begin{proof}
We give a description of $A^\infty$ along the lines of the standard literature (see \cite{Voj}, for example). 
	Consider first the case of the polynomial ring  $C = \Z[x_\lambda]$ where $\lambda$ belongs to some index set  $\Lambda.$ We have a commutative vertex ring $(C^\infty, \uD^{C^\infty}),$ where $C^\infty = \Z[x_\lambda^{(j)}]_{_{\lambda \in \Lambda, j \in \Bbb{N}}}$, $\iota$ is given by $x_\lambda \mapsto x_\lambda^{(0)}$,  and  $\underline{D}^{C^\infty}$ is defined by $D^{C^\infty}_i(x_\lambda^{(j)}) =  \binom{i+j}{i}  x_\lambda^{(j +i)}$ and extension to monomials by the Leibniz rule. 

Recall that $R$ carries the canonical Hasse-Schmidt derivation $\uD$. Consider the $R-$algebra $B = R \otimes_\Z C = R[x_\lambda].$ Then $(B^\infty, \uD^{B^\infty})$ is obtained by base change from $C^\infty$, namely 
	$B^\infty = R \otimes_\Z C^\infty = R[x_\lambda^{(j)}]_{_{\lambda \in \Lambda, j \in \Bbb{N}}}$ with iterative Hasse-Schmidt derivation $\underline{D}^{B^\infty} = \underline{D} \otimes \underline{D}^{C^\infty},$ that is ${D}^{B^\infty}_i (ra) =  \sum_{\substack{ \\ p+q = i}}D_p(r)D^{C^\infty}_q(a)$ under the identification $R \otimes_\Z C^\infty = R[x_\lambda^{(j)}]_{_{\lambda \in \Lambda, j \in \Bbb{N}}}$, and where we extend the $\iota$ of $C^\infty$ $R-$linearly. 

Finally assume that $A = B/I$ where $I = \langle f_\mu \rangle_{\mu \in M}.$ Let $I^\infty$ be the ideal of $B^\infty$ generated by all ${D}^{B^\infty}_i(f_\mu).$ Then $I^\infty$ is a vertex algebra ideal of $B^\infty$ and $A^\infty = B^\infty/I^\infty$ with $\iota (rx_\lambda + I) =  rx^{(0)}_\lambda + I^\infty$ is as required.     \end{proof}

 \begin{rem} \label{remarc} Recall the canonical $R-$algebra homomorphism $\iota \colon A \to A^\infty.$ If $\underline{D}$ {\it is trivial,} then $\iota$ is injective and $A^\infty,$ {\it without its vertex ring structure},  is isomorphic to the arc algebra encountered in the literature, where we also find it denoted by $A_\infty$, $J_\infty(A)$, or ${\rm HS}^\infty_{A/R}$ (notably in Vojta's transparent exposition \cite{Voj} on the Hasse-Schmidt analogue of $\Omega_{A/R}$).
	The reader ought to keep in mind that in general $\iota$  {\it need not be injective}. Consider the commutative vertex ring $R = \Z[t]$ with $D = \frac{d}{dt}.$ The homomorphism $\Z[t] \to \Z$  given by $t \mapsto 0$ makes $\Z$ into an object of $\Ralg.$ We have $I = \langle t \rangle.$ Since $1 \in I^\infty$ we get $\Z^\infty = 0.$ 
\end{rem}

\subsubsection{Vertex  schemes and groups}\label{avs}

Let $X$ be a topological space. A \emph{sheaf of (commutative) vertex rings} $\mathcal F$ on $X$ is a sheaf on $X$ whose sections $\mathcal F(U)$ are (commutative) vertex rings for every open set $U \subset X$, and whose restriction maps are homomorphisms of vertex rings. 
This forms a category with the obvious notion of morphism.

A \emph{locally commutative vertex ringed space}, or simply \emph{locally cv ringed space}, is a locally ringed space $(X, \cO_X)$, where the structure sheaf is a sheaf of commutative vertex rings.
A morphism of locally cv ringed spaces $(f, f^\#) \colon (X, \mathcal O_X) \to (Y, \mathcal O_Y)$ is a morphism of locally ringed spaces such that $f^\# \colon \mathcal O_Y \to f_\ast \mathcal O_X$ is a morphism of $Y-$sheaves of vertex rings. 

 Let $(R, \underline{D})$ be a commutative vertex ring. The locally cv ringed space $(\Spec (R), \cO_{\uD})$ 
 is defined as follows: For $f \in R$ there is a unique iterative Hasse-Schmidt derivation ${\uD}^f$ on $R_f$ extending $\uD.$ Then $\cO_{\uD}$ is the unique sheaf of commutative vertex rings on $\Spec(R)$ whose sections over the basic open set $D(f)$ is $(R_f, \underline{D}^f).$ A locally cv ringed space $(X, \cO_X)$ is called an {\it affine vertex scheme} if it is isomorphic to $(\Spec (R), \cO_{\uD})$ for some commutative vertex ring $(R, \uD).$ 
 
 A locally cv ringed space $(X, \cO_X)$ is a {\it vertex scheme} if every point of $X$ admits an open neighbourhood isomorphic to an affine vertex scheme. If $(X, \cO_X)$ is a vertex scheme, the natural map $\Hom\!\big((X, \cO_X), \Spec(R,\uD)\big) \to \Hom_{\mathbb Z\textsf{-cva}}\!\big(R, \cO(X)\big)$ is bijective. In particular, the category of affine vertex schemes is anti-equivalent to that of commutative vertex rings.
 \begin{ex}\label{P1} Let $k$ be a field. For $i = 1,2$ let $R_i = k[x_i]$ and $S_i = k [x_i^{\pm 1}].$ Let $X_i = \Spec(R_i),$ $U_{12} = \Spec(S_1),$ and $U_{21} = \Spec(S_2).$ Consider the $k-$scheme isomorphism $\phi_{12} \colon U_{12} \to U_{21}$ arising  from the $k-$algebra isomorphism $f \colon S_2 \to S_1$ given by $x_2 \mapsto x_1^{-1}.$  Let $D^2 = \frac{d}{d x_2}$ and define $D^1 := f \circ D^2 \circ f^{-1} = - x_1^2 \frac{d}{d x_1}$. 
	 These extend to iterative Hasse-Schmidt derivations $\uD^1$ and $\uD^2$ that preserve $R_1$ and $R_2$ respectively, and such that $f \colon (S_2, \uD^2) \to (S_1, \uD^1)$ is an isomorphism in $\kcva.$ Then $\phi_{12}$ and $\phi_{21} := \phi_{12}^{-1}$  allow us to glue $(X_1, \uD^1)$ and $(X_2, \uD^2)$ along $(U_{12} , \uD^1)$ and $(U_{21}, \uD^2)$  This yields a vertex scheme $(\Bbb{P}_k^1, \uD)$ having the projective line over $k$ as underlying scheme.\end{ex}
 
 Let $(X, \cO_X)$ be a vertex scheme. One defines in the usual fashion the category $X_{\rm vs}$ of vertex schemes over  $(X, \cO_X)$: The objects are morphisms $(Y, \cO_Y) \to (X, \cO_X)$ and the morphisms between objects are the obvious ones. As usual, for $(\Spec(R), \cO_{\uD})$ we will write $R_{\rm vs}$ instead of $\Spec(R)_{\rm vs}.$ The category $X_{\rm vs}$ has products (the usual reduction to the affine case combined with the tensor product of vertex rings of Section \ref{ES}).
 
 Let $Y = (Y, \cO_Y)$ be in $X_{\rm vs}$. The functor of points $h_Y \colon X_{\rm vs} \to \set$ of $Y$ is defined by $Z = (Z, \cO_Z) \mapsto {\rm Hom}_{X_{\rm vs}}(Z , Y).$ If $Z = \Spec(S)$ is affine, we write $Y(S)$ instead of $h_Y(\Spec(S)).$ If $Y = \Spec(B)$ is affine we denote $h_Y$ by $h^B.$ 
 
 A {\it vertex group scheme over} $X,$ or {\it an} $X-${\it vertex group}, is an object $\BG \in X_{\rm vs}$ whose functor of points $h_{\BG}$ is a group functor: Each $\BG(Y)$ has a group structure and this structure is functorial on $Y.$\footnote{\,We use boldface characters for group schemes to distinguish them from abstract groups.}   
 
 Let $R$ be a commutative vertex ring, and consider an  extension $B$ of $R.$ Assume that the functor of points $h^B$ factors through $\grp,$ that is, it is a group functor. Then the affine vertex scheme $\Spec(B)$ is an {\it affine $R-$vertex group scheme}. If we denote it by $\BH$, then we denote $B$ by $R[\BH].$ The comultiplication $\Delta \colon R[\BH] \to R[\BH] \otimes_R R[\BH]$ corresponding to the multiplication of $\BH$ according to Yoneda's correspondence, is a commutative $R-$vertex algebra homomorphism. Similarly for the counit and antipode.
 
  \begin{ex} $\BG^\infty$ (see Lemma \ref{jetlem}) is an affine vertex $R-$group. If $A \in \Ralg$ represents $\BG$ then $R[\BG^\infty] = A^\infty.$
\end{ex}

 Let  $(X, \mathcal O_X)$ be a vertex scheme. An\emph{ $\mathcal O_X-$vertex algebra} is a sheaf of vertex rings $\mathcal V$ on $X$, together with a morphism $\iota \colon \mathcal O_X \to \mathcal V$ of vertex ring sheaves, such that $\iota_U( \mathcal O_X(U)) \subset C(\mathcal V(U))$ for every open $U \subset X$.
 \begin{ex} \label{tildeV} A vertex algebra $V$ over a commutative vertex ring $(R, \uD)$ as defined in Section \ref{vcvr} yields a unique $\mathcal O_{\uD}-$vertex algebra $\widetilde{V}$ over $(\Spec (R), \cO_{\uD})$ satisfying $\widetilde{V}(D(f)) = V \otimes_R R_f.$   In the present paper our base will be mostly affine. We shall return to this general setting in future work. \end{ex}

\subsubsection{Vacuum subgroups} \label{vacuumsubgroups}

Let $k$ be a commutative unital ring that we view as a commutative vertex ring with trivial Hasse-Schmidt derivation.
In this case, any $R \in \kcva$ with Hasse-Schmidt derivation $\underline D = (D_i)_{i \geq 0}$ gives rise to a commutative $k-$algebra 
\[ K(R) = \ker(\underline D) = \{ r \in R \mid D_i(r) = 0 \text{ for all $i > 0$} \}. \]
This is the largest vertex subalgebra of $R$ with trivial Hasse-Schmidt derivation; it is called \textit{vacuum subalgebra} in \cite{Kac}  or \textit{ring of constants} in \cite{B}.

Consider an affine  $k-$group $\mathbf G\colon \kalg \to \grp.$ We attach to $\BG$ a group functor
$\GVac \colon \kcva \rightarrow \grp$ by defining $\GVac (S) = \BG(K(S))$ for any commutative $k-$vertex algebra $S = (S, \underline{D}).$}
\begin{prop} Let $\BG$ be an affine $k-$group. Then $\GVac$ is an affine vertex $k-$group.
\end{prop}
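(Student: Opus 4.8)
The plan is to realize $\GVac$ as the functor of points of a single, very explicit commutative $k$-vertex algebra: the coordinate Hopf algebra of $\BG$ equipped with the \emph{trivial} Hasse-Schmidt derivation. So first I would fix a commutative Hopf $k$-algebra $A$ with $\BG \simeq \Hom_{\kalg}(A,-)$, and write $A_{\mathrm{triv}}$ for the object obtained by equipping $A$ with the trivial Hasse-Schmidt derivation $(\id_A,0,0,\dots)$; by Theorem \ref{masonord} this is a legitimate commutative vertex ring, and since $k$ itself carries the trivial derivation, the unit $k \to A$ is a $\kcva$-morphism, so indeed $A_{\mathrm{triv}} \in \kcva$.

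The heart of the argument is an adjunction. I would check that the functor $(-)_{\mathrm{triv}}\colon \kalg \to \kcva$ is left adjoint to $K = \ker(\underline D)\colon \kcva \to \kalg$. Unwinding definitions: a $\kcva$-morphism $A_{\mathrm{triv}} \to S$ is a $k$-algebra map $\varphi\colon A \to S$ that intertwines the canonical Hasse-Schmidt derivations (these are always preserved by vertex ring homomorphisms), and since $D_i^{A_{\mathrm{triv}}} = 0$ for $i>0$ this is exactly a $k$-algebra map with $D_i^S(\varphi(a)) = 0$ for all $a$ and all $i>0$, i.e.\ a $k$-algebra map $A \to K(S)$. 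The resulting bijection $\Hom_{\kcva}(A_{\mathrm{triv}},S) \cong \Hom_{\kalg}(A,K(S))$ is plainly natural in $S$. Combining with $\GVac(S) = \BG(K(S)) = \Hom_{\kalg}(A,K(S))$ then yields, naturally in $S \in \kcva$,
\[ \GVac(S) \;\cong\; \Hom_{\kcva}(A_{\mathrm{triv}},S), \]
so $\GVac \simeq h^{A_{\mathrm{triv}}}$ is representable.

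It remains to see that this makes $\Spec(A_{\mathrm{triv}})$ an affine vertex $k$-group, which is immediate: $\GVac$ is group-valued by construction ($K$ is a functor and $\BG$ a group functor), so its representing functor $h^{A_{\mathrm{triv}}}$ factors through $\grp$, and $k[\GVac] = A_{\mathrm{triv}}$. Alternatively, and perhaps more transparently, one can transport the Hopf structure directly: $(-)_{\mathrm{triv}}$ is strong monoidal, because by the base-change formula of Section \ref{ES} the tensor product of commutative vertex rings with trivial Hasse-Schmidt derivations again has trivial derivation, so the comultiplication, counit and antipode of $A$ carry over verbatim to $A_{\mathrm{triv}}$.

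I do not expect a serious obstacle here — the proof is essentially formal. The one point that needs care, and that really drives the whole argument, is the observation that a homomorphism of commutative vertex rings is automatically forced to commute with the canonical Hasse-Schmidt derivations; that is precisely what turns $K$ into the right adjoint of $(-)_{\mathrm{triv}}$ and lets the identification of representing objects go through. (It is also worth recording the contrast with the arc-algebra construction of Lemma \ref{jetlem}: there $k[\BG^\infty]=A^\infty$, here $k[\GVac]=A_{\mathrm{triv}}$.)
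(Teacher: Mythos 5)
Your proof is correct and takes exactly the route the paper intends: the paper's own proof is just the one-line assertion that $k[\GVac]=A$ viewed in $\kcva$ with trivial derivation, and your adjunction $(-)_{\mathrm{triv}}\dashv K$ (hinging on the fact that vertex ring homomorphisms automatically preserve the canonical Hasse-Schmidt derivations) is precisely the verification the paper leaves to the reader. No gaps.
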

\begin{proof}
Suppose $A \in \kalg$ represents $\BG$.
	One can verify from the definition that  $\GVac$ is affine with coordinate ring $k[\GVac] = A,$  viewed as an object of $\kcva$ with trivial derivation. 
\end{proof}

\subsection{Representability of $\BAut(V)$}

 Let $V$ be a vertex algebra over a commutative vertex ring $R.$ We shall see in Section \ref{torsor} that $\BAut(V)$ is a sheaf of groups on the flat site of $R.$ In fact, we have that $\BAut(V)$ is an affine $R-$vertex group in important cases.

\subsubsection{Finitely generated vertex algebras} A classical result by Dong and Griess \cite{DongGr} says that the automorphism group of a finitely generated vertex operator algebra over $\mathbb C$ is an algebraic group. Their delicate argument can be adjusted to our situation to provide the following.

\begin{thm} \label{filtrep}
	Let $V$ be a filtered $R-$vertex algebra. Assume that the filtered pieces have the following properties.
\begin{enumerate}
    \item There exists $n \geq 0$ such that $U = F_n V$ generates $V$ as an $R-$vertex algebra,
    \item $U$ is a finite rank free $R-$module with basis $B = B_n$,
    \item for every $m > n$, there exists a set $E_m$ of $B-$admissible pairs $(\nu,b)$ (see {\rm \cite{DongGr}}) such that $F_m V$ is free with basis $B_m = \{ \nu(b) \}_{(\nu,b) \in E_m}$; for $m = n$ we put $E_n = \{(\id_V, b)\}_{b \in B}$,
    \item and we have $E_m \subset E_{m+1}$ for every $m \geq n$.
\end{enumerate}
   Then, $\BAut(V)$ is representable by a finitely generated commutative $R-$vertex algebra. \qed
\end{thm}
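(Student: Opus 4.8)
The plan is to realize $\BAut(V)$ as a ``closed subgroup'' of the vertex group $\BGL_N^{\infty}$ of Lemma~\ref{jetlem}, where $N$ is the rank of $U = F_n V$, by carrying the argument of Dong and Griess \cite{DongGr} over to the present setting. Recall that $\BGL_N^{\infty}$ is represented by $A := R[\BGL_N]^{\infty}$, which is finitely generated as an $R$-vertex algebra by Lemma~\ref{jetlem} and in which the arc variables $x_{ij}^{(k)}$ record the canonical Hasse--Schmidt derivatives $D^A_k(x_{ij}^{(0)})$ of the entries of the universal matrix $X = (x_{ij}^{(0)})$. Since $V$ is filtered, every $\varphi \in \BAut(V)(S) = \Aut_{\Sfva}(V \otimes_R S)$ and its inverse preserve the filtration, so $\varphi$ restricts to an automorphism of $F_n(V \otimes_R S) = U \otimes_R S \cong S^N$, which is $S$-linear by Lemma~\ref{charRva}; recording this restriction in the basis $B = B_n$ defines a natural transformation $\rho \colon \BAut(V) \to \BGL_N^{\infty}$. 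As $U$ generates $V$, the submodule $U \otimes_R S$ generates $V \otimes_R S$ as an $S$-vertex algebra (the defining products of $V \otimes_R S$ restrict on vectors $v \otimes 1$ to those of $V$), so any two $S$-vertex algebra endomorphisms of $V \otimes_R S$ agreeing on $U \otimes_R S$ coincide; hence $\rho$ is a monomorphism, and it remains to cut out its image by equations in the arc variables.

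Hypotheses (2)--(4) provide an honest $R$-basis $B_\infty = \bigcup_{m \geq n} B_m = \{\nu(b)\}$ of $V = \bigcup_m F_m V$ consisting of admissible words in $B$, together with $R$-valued structure constants that express every product $(\nu(b))_\ell(\nu'(b'))$, every derivative $D^V_k(\nu(b))$, and the vacuum in this basis (the reduction of an arbitrary product of admissible words to the basis is the content of the admissibility machinery of \cite{DongGr}). Over $S = A$ define the $A$-linear ``universal word map'' $\Phi \colon V \otimes_R A \to V \otimes_R A$ which on $U \otimes_R A$ is the generic automorphism $b_i \otimes 1 \mapsto \sum_j x_{ji}^{(0)}(b_j \otimes 1)$ and which sends an admissible basis word $\nu(b) \otimes 1$ to the result of evaluating the word $\nu$ on these images inside the $A$-vertex algebra $V \otimes_R A$. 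Evaluating such a word by means of \eqref{eqchRva} brings down the derivatives $D^A_k(x_{ji}^{(0)}) = x_{ji}^{(k)}$ (and, via the Leibniz rule, polynomials in the arc variables), while the products $(b_p)_\ell(b_q) \in V$ that occur are rewritten through the $R$-valued structure constants; in this way one checks that the coordinates of $\Phi(\nu(b) \otimes 1)$ in the basis $B_\infty$ are polynomials in the arc variables with coefficients in $R$. For each extension $S/R$ and each $g \in \GL_N(S) = \BGL_N^{\infty}(S)$ the base change of $\Phi$ along the classifying map $A \to S$ is the corresponding $S$-linear word map $\Phi_g$, and admissibility together with the filtration axioms ensures that $\Phi_g$ preserves the filtration of $V \otimes_R S$.

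One then has $g \in \rho(S)$ if and only if both $\Phi_g$ and the word map $\Phi_{g^{-1}}$ of the inverse matrix (whose arc coordinates are again polynomials in those of $g$, as $\det g$ is invertible) are homomorphisms of vertex rings fixing $\mathbf 1$: for $\varphi \in \BAut(V)(S)$ this holds with $\Phi_{\rho(\varphi)} = \varphi$, and conversely, if both $\Phi_g$ and $\Phi_{g^{-1}}$ are such homomorphisms, then $\Phi_{g^{-1}} \circ \Phi_g$ and $\Phi_g \circ \Phi_{g^{-1}}$ are vertex ring endomorphisms restricting to the identity on $U \otimes_R S$, hence equal to $\id$, so $\Phi_g$ is a filtered $S$-vertex algebra automorphism with $\rho(\Phi_g) = g$. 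Now the conditions ``$\Phi_g(\mathbf 1) = \mathbf 1$'' and ``$\Phi_g$ respects all $n$-th products'' — for the latter it suffices, by the Jacobi identity, to test the products $(b)_\ell(\,\cdot\,)$ with $b \in B$ and $\ell \in \Z$ — together with the analogous conditions on $\Phi_{g^{-1}}$, become, upon comparing $B_\infty$-coordinates and invoking the formulas above, the vanishing in $S$ of certain arc polynomials evaluated at $g$. Pulling these polynomials back to $A$ (that is, taking $S = A$ and $g = X$) yields a family of elements of $A$ whose generated vertex-algebra ideal $J$ satisfies $\Hom_{\Rcva}(A/J, S) \cong \rho(S) \cong \BAut(V)(S)$, naturally in $S$. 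As $A$ is finitely generated as an $R$-vertex algebra, so is $A/J$, which is therefore the desired representing object.

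The technical heart is the second step: checking that the universal word map $\Phi$ is well defined, with coordinates polynomial in the arc variables, and that homomorphy can be tested on the generators. This is exactly the Dong--Griess analysis of admissible monomials \cite{DongGr}, now carried out with the additional terms coming from the Hasse--Schmidt corrections in \eqref{eqchRva} — terms that do not appear over $\mathbb C$ and that are precisely what forces the passage from the group scheme $\BGL_N$ to its arc algebra $\BGL_N^{\infty}$.
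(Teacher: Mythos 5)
Your proposal is correct and follows exactly the route the paper intends: the paper gives no written proof of Theorem \ref{filtrep}, only the remark that the Dong--Griess argument can be adjusted, and your adaptation --- embedding $\BAut(V)$ into $\BGL_N^\infty$ via the restriction to $F_nV$, building the universal word map over $R[\BGL_N]^\infty$ with the Hasse--Schmidt correction terms from \eqref{eqchRva} producing the arc variables, and cutting out the image by the resulting vertex-algebra ideal --- is precisely that adjustment. The one point worth making explicit is that hypotheses (2)--(4) give $F_nV$ as a free direct summand of $V$, so $F_n(V\otimes_R S)\simeq S^N$ even without flatness of $S/R$; your argument implicitly uses this and it is exactly what the freeness assumptions are for.
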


\subsubsection{Heisenberg and universal affine vertex algebras}\label{Secuniaff}

Let $k$ be a field of characteristic $0.$ Let us first recall the construction of the vertex algebras $V(\mathfrak g, \ell)$. See Section 6.2 of \cite{LL} for details.
   We are given a $k-$Lie algebra $\mathfrak g$ together with a nondegenerate symmetric invariant bilinear form $(\cdot \,, \cdot)$ on it and a scalar $\ell \in k$.
    We form the centrally extended loop algebra $\widehat {\mathfrak g} = (\mathfrak g \otimes_k k[t^{\pm 1}]) \oplus k\mathbf c$ with Lie bracket 
    \[ [a_{(m)}, b_{(n)}] = m (a,b) \delta_{m,-n} \mathbf c + [ab]_{(m+n)}, \quad [\mathbf c, \widehat {\mathfrak g}] = 0,\]
    where $a,b \in \mathfrak g$, $m,n \in \mathbb Z$, and where we use the notation $a_{(m)} = a \otimes t^m$.
    Denote by $k_\ell$ the one-dimensional $\mathfrak g[t] \oplus k \mathbf c-$module on which $\mathfrak g[t]$ acts trivially and $\mathbf c$ acts as multiplication by $\ell$, and form the induced $\widehat{\mathfrak g}-$module 
    \[V(\mathfrak g, \ell) = U(\widehat{\mathfrak g}) \otimes_{U(\mathfrak g[t] \oplus k \mathbf c)} k_\ell.\]
    One then defines $\mathbf 1 = 1_{U(\widehat{\mathfrak g})} \otimes 1_k$ and identifies $\mathfrak g$ as a subspace of $V(\mathfrak g, \ell)$ via $a \mapsto a_{(-1)} \mathbf 1$.
    Then there exists a unique vertex algebra structure on $V(\mathfrak g, \ell)$ such that $\mathbf 1$ is the vacuum vector and $Y(a,z) = \sum_{n \in \mathbb Z} a_{(n)} z^{-n-1}$ for all $a \in \mathfrak g$.
    We call $V(\mathfrak g, \ell)$ the \emph{universal affine vertex algebra on $\mathfrak g$ of level $\ell$}.

    For an abelian Lie algebra $\mathfrak h$ of finite dimension $n$ and $\ell \neq 0$ with $\sqrt\ell \in k$, we obtain the \emph{rank $n$ Heisenberg vertex algebra} $V(\mathfrak h, \ell) \simeq V(\mathfrak h, 1)$.
One associates to such $\mathfrak h$ the orthogonal $k-$group $\mathbf O(\mathfrak h)$ whose  functor of points associates to each ordinary $k-$algebra $R$ the group $O(\mathfrak h \otimes_k R)$ of $R-$linear automorphisms of $\mathfrak h \otimes_k R$ which preserve the bilinear form $(x \otimes a, y \otimes b)= (x,y) \otimes ab$, $x,y \in \mathfrak h$, $a, b \in R$.
  
\begin{thm}\label{affheis} Let $k$ be a field of characteristic $0$.
    \begin{enumerate}[label = \textup{\alph*)}]
    \item Let $\mathfrak h$ be a $k-$vector space of finite dimension $n$ equipped with a nondegenerate symmetric bilinear form, and let $V(\mathfrak h,1)$ be the associated Heisenberg $k-$vertex algebra. Then 
    \[ \BAut(V(\mathfrak h, 1)) \simeq (\mathbf G_a^n)^\infty \rtimes \mathbf O(\mathfrak h)^{\rm vac}, \]
		    where  $\mathbf G_a$ is the additive $k-$group, and for all $R \in \kcva$ the action of $\mathbf{O}(\mathfrak h)^{\rm vac}(R)$ on $\mathbf G_a^n (R) = R^n \simeq \mathfrak h \otimes_k R$ is the canonical one, i.e.\ it factors through $\mathbf{GL}_n(R)$.
    \item Let $\mathfrak g$ be a simple $k-$Lie algebra of finite dimension, $\ell \in k$, and $(\cdot \,, \cdot)$ a nonzero multiple of the Killing form.
 Let $V(\mathfrak g, \ell)$ be the associated universal affine vertex algebra.
    Then 
    \[ \BAut(V(\mathfrak g, \ell)) \simeq \BAut(\mathfrak g)^\infty,\]
    where $\BAut(\mathfrak g)(R)$ is the group of $R-$Lie automorphisms of $\mathfrak g \otimes_k R$.
    \end{enumerate}
	In particular $\BAut(V(\mathfrak h, 1))$ and $\BAut(V(\mathfrak g, \ell))$ are affine $k-$vertex group schemes (cf.\ Theorem \ref{filtrep}).
\end{thm}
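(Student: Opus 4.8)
The plan is to realise both automorphism groups through the filtration data of Lemmas \ref{Lie} and \ref{comm}, and then to pin down the answer using the universal property of $V(\mathfrak{g},\ell)$. Concretely, I equip $V(\mathfrak{g},\ell)$ (resp. $V(\mathfrak{h},1)$) with the filtration of Theorem \ref{filtrep} coming from its PBW basis, taken at the generating level $U=F_1V=k\mathbf 1\oplus\mathfrak{g}$ (resp. $k\mathbf 1\oplus\mathfrak{h}$) under $a\leftrightarrow a_{(-1)}\mathbf 1$, and extended below by $F_0V=k\mathbf 1\simeq k$ and $F_iV=0$ for $i<0$. By Lemma \ref{Lie} (we are in characteristic $0$), $\mathfrak{L}(V)=F_1V/F_0V\simeq\mathfrak{g}$ as $k$-Lie algebras, the bracket being the $0$-th product, and by Lemma \ref{comm} the $1$-st product induces the invariant symmetric form $\ell(\cdot,\cdot)$ on $\mathfrak{L}(V)$. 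For $S\in\kcva$ the base change $V\otimes_kS$ inherits a filtration with $F_1(V\otimes_kS)=S\mathbf 1\oplus(\mathfrak{g}\otimes_kS)$, and by flatness $\mathfrak{L}(V\otimes_kS)\simeq\mathfrak{g}\otimes_kS$ as $S$-Lie algebras with the $S$-bilinear form $\ell\langle\cdot,\cdot\rangle$. A filtered automorphism $\Phi$ of $V\otimes_kS$, together with $\Phi^{-1}$, preserves the filtration, so it descends to an $S$-Lie automorphism $\bar\Phi$ of $\mathfrak{g}\otimes_kS$ preserving the form; this defines a natural homomorphism $r_S\colon\BAut(V)(S)\to\Aut_{S\text{-Lie}}(\mathfrak{g}\otimes_kS)$. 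Since $V$ is generated as a vertex algebra by $F_1V$ (each $a_{(-m)}\mathbf 1$ is a $D^V$-iterate of $a$, the rest of the PBW basis being obtained by further products), $V\otimes_kS$ is generated over $S$ by $F_1(V\otimes_kS)$.

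Next I would use that $V(\mathfrak{g},\ell)$ is universal in the sense of \cite{LL}: vertex algebra homomorphisms $V(\mathfrak{g},\ell)\to W$ into a vertex ring $W$ over $k$ correspond bijectively to $k$-linear $\rho\colon\mathfrak{g}\to W$ with $\rho(a)_0\rho(b)=\rho([a,b])$, $\rho(a)_1\rho(b)=\ell(a,b)\mathbf 1$, and $\rho(a)_n\rho(b)=0$ for $n\ge2$; and that base change $V\mapsto V\otimes_kS$ is left adjoint to restriction $\Sfva\to$ (filtered vertex algebras over $k$), as one checks directly from the formulas of Section \ref{ES} and \eqref{eqchRva}. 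Hence the same list describes $\Hom_{\Sfva}(V(\mathfrak{g},\ell)\otimes_kS,W)$ for $W$ an $S$-vertex algebra, the filtered condition being $\rho(\mathfrak{g})\subseteq F_1W$. Taking $W=V\otimes_kS$ and writing $\rho(a)=\lambda_0(a)\mathbf 1+\psi_0(a)$ with $\lambda_0\in\Hom_k(\mathfrak{g},S)$ and $\psi_0\in\Hom_k(\mathfrak{g},\mathfrak{g}\otimes_kS)$, the three relations become, after a computation using $\mathbf 1_n=\delta_{n,-1}\id$, the creation axiom, and the explicit $0$-th and $1$-st products of $\mathfrak{g}\otimes_kS$ inside $V\otimes_kS$ (into which $\underline D^S$ enters): (i) $[\psi_0(a),\psi_0(b)]=\psi_0([a,b])$; (ii) $\ell\langle\psi_0(a),\psi_0(b)\rangle=\ell(a,b)$; and (iii) $\ell\langle\widehat D\psi_0(a),\psi_0(b)\rangle=\lambda_0([a,b])$, where $\widehat D$ applies $D^S_1$ to the $S$-coefficients. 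Thus the $S$-linear extension $\psi$ of $\psi_0$ is an $S$-Lie endomorphism of $\mathfrak{g}\otimes_kS$, and $\Phi$ is an automorphism exactly when $\operatorname{gr}\Phi$ is, i.e. (since $\operatorname{gr}(V\otimes_kS)$ is generated over $S$ by $\operatorname{gr}_1=\mathfrak{g}\otimes_kS$, acted on by $\psi$) exactly when $\psi$ is invertible.

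For part (b), with $\mathfrak{g}$ simple and $(\cdot,\cdot)$ proportional to the Killing form, every $S$-Lie automorphism of $\mathfrak{g}\otimes_kS$ preserves $\langle\cdot,\cdot\rangle$, so (ii) is automatic and $r_S$ surjects onto $\Aut_{S\text{-Lie}}(\mathfrak{g}\otimes_kS)=\BAut(\mathfrak{g})(S)=\BAut(\mathfrak{g})^\infty(S)$, the last equality being the adjunction of Lemma \ref{jetlem}. For injectivity: if $\psi_0=\id$ then (iii) forces $\lambda_0([a,b])=0$, hence $\lambda_0=0$ since $\mathfrak{g}$ is perfect, so $\Phi$ restricts to the identity on $F_1V$ and $\Phi=\id$. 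For surjectivity one must solve (iii) for $\lambda_0$ given $\psi\in\Aut_{S\text{-Lie}}(\mathfrak{g}\otimes_kS)$: the bilinear form $\beta(a,b)=\ell\langle\widehat D\psi_0(a),\psi_0(b)\rangle$ is antisymmetric (by (ii) and the Leibniz rule for $D_1^S$ and the form), and since $\psi_0$ respects bracket and form its Chevalley--Eilenberg differential vanishes, $\beta([a,b],c)+\beta([b,c],a)+\beta([c,a],b)=\ell D^S_1\big((a,[b,c])\big)=0$; as $H^1(\mathfrak{g},k)=H^2(\mathfrak{g},k)=0$ by Whitehead, $\beta$ is the coboundary of a unique $\lambda_0\in\mathfrak{g}^*\otimes_kS$, which produces the required $\Phi$. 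Therefore $r$ is an isomorphism $\BAut(V(\mathfrak{g},\ell))\simeq\BAut(\mathfrak{g})^\infty$.

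For part (a), $\mathfrak{h}$ abelian makes (i) vacuous, (ii) says $\psi\in O(\mathfrak{h}\otimes_kS)$, and (iii), whose right side is $\lambda_0(0)=0$, forces $\langle\widehat D\psi_0(a),\psi_0(b)\rangle=0$; as $\psi$ is an isometric automorphism and the form is nondegenerate over $S$ this gives $\widehat D\psi_0(a)=0$, i.e. $\psi_0(\mathfrak{h})\subseteq\mathfrak{h}\otimes_k\ker D^S_1=\mathfrak{h}\otimes_kK(S)$ (using $\ker D^S_1=K(S)$ in characteristic $0$), so $\psi\in\mathbf O(\mathfrak{h})(K(S))=\mathbf O(\mathfrak{h})^{\rm vac}(S)$, while $\lambda_0\in\Hom_k(\mathfrak{h},S)\simeq(\mathbf G_a^n)^\infty(S)$ (again by Lemma \ref{jetlem}) is unconstrained. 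The subgroup $\psi_0=\id$ is $\ker r_S$, it is normal, and composing automorphisms shows $\BAut(V(\mathfrak{h},1))(S)$ is $(\mathbf G_a^n)^\infty(S)\rtimes\mathbf O(\mathfrak{h})^{\rm vac}(S)$, with $\mathbf O(\mathfrak{h})^{\rm vac}(S)$ acting on the translations by the contragredient of its action on $\mathfrak{h}\otimes_kS$, which through the form is the canonical action factoring through $\mathbf{GL}_n(S)$; hence $\BAut(V(\mathfrak{h},1))\simeq(\mathbf G_a^n)^\infty\rtimes\mathbf O(\mathfrak{h})^{\rm vac}$. Affineness is then immediate: $\BAut(\mathfrak{g})^\infty$ and $(\mathbf G_a^n)^\infty$ are affine vertex $k$-groups by Lemma \ref{jetlem}, $\mathbf O(\mathfrak{h})^{\rm vac}$ is affine by the proposition in Section \ref{vacuumsubgroups}, and a semidirect product of affine vertex $k$-groups is again affine (its coordinate ring is the tensor product). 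The main obstacle I anticipate is precisely the computation turning the three low-order vertex relations into (i)--(iii) under base change --- where $\underline D^S$ produces the ``central''/vacuum contribution --- and the ensuing dichotomy: the translation part $\lambda_0$ is free for Heisenberg but rigidly and consistently determined in the simple affine case, the consistency resting on perfectness and the Whitehead lemmas for $\mathfrak{g}$, versus abelianness for $\mathfrak{h}$.
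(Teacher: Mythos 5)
Your proposal is correct and follows essentially the same route as the paper: reduce to the restriction map onto $F_1$ via the universal ($F_1$-universal) property of $V(\mathfrak g,\ell)$ and its stability under base change, then decompose an automorphism of $F_1$ into a translation part $\lambda_0\in\mathfrak L(V)^\ast$ and a Lie part $\psi$ subject to the cocycle condition coming from the $1$-st product (Lemmas \ref{uniaffisF1}, \ref{isoF1res}, \ref{F1} in the paper). Your Whitehead-lemma argument for solving the cocycle condition in the simple case, and the nondegeneracy argument forcing $\psi$ into $\mathbf O(\mathfrak h)^{\rm vac}$ in the Heisenberg case, are precisely the ``straightforward calculations'' the paper leaves implicit in steps \ref{F1unia}) and \ref{F1unib}) of its proof.
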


The proof will be given after we recall and establish some preliminary results.   In general, $V(\mathfrak g, \ell)$ enjoys the following properties:
    \begin{enumerate}[label = \textup{\alph*)}]
        \item We have a decomposition $V(\mathfrak g, \ell) = \bigoplus_{n \geq 0} V(\mathfrak g, \ell)_n$ into finite dimensional $k-$spaces $V(\mathfrak g, \ell)_n$ spanned by all vectors of the form 
        $a^1_{-m_1} \cdots a^r_{-m_r} \mathbf 1$, where $r\in \mathbb Z_{\geq 0}$, $a^i \in \mathfrak g$, and $m_i \in \mathbb Z_{\geq 1}$  with  $m_1 + \cdots + m_r = n$. 
        This makes $V(\mathfrak g, \ell)$ into a graded vertex algebra, but in our context it is natural to work with the associated filtration $F_i V(\mathfrak g, \ell) = \bigoplus_{n \leq i} V(\mathfrak g, \ell)_n$.
\item The map $\mathfrak g \to V(\mathfrak g, \ell)$, $a \mapsto a_{-1} \mathbf 1$, yields an isomorphism of Lie algebras $\mathfrak g \simeq \mathfrak L(V(\mathfrak g,\ell))$.
	\item \label{pc} The form from Lemma \ref{comm} is given by $\sigma_{V(\mathfrak g, \ell)} = \ell(\cdot \,, \cdot)$.
	\item \label{pd} For any $\widehat {\mathfrak g}-$module $W $ that admits $w_+ \in W$ such that $a_{(n)} w_+ = 0$ for all $n \geq 0$ and $a \in {\mathfrak g}$ and whereon $\mathbf c$ acts as multiplication by $\ell$, there exists a unique $\widehat {\mathfrak g}-$map $V (\mathfrak g, \ell) \to W$ with $\mathbf 1 \mapsto w_+$.
    \end{enumerate}
    
We abstract from \ref{pc} and \ref{pd} a notion which is useful for the computation of filtered automorphism groups.

    Let $R$ be a commutative vertex ring and $V$ a filtered $R-$vertex algebra.
    We say that $V$ is \textit{$F_1-$universal} if for any filtered $R-$vertex algebra $W$ and any $R-$linear map $\Phi \colon F_1 V \to F_1 W$ respecting the $i-$th products for $i \geq 0$, and mapping $\mathbf 1 \mapsto \mathbf 1$, there exists a unique homomorphism $V \to W$ of filtered $R-$vertex algebras (co-)restricting to $\Phi$:
    \[ \begin{tikzcd} F_1 V \ar[r, "\forall \Phi"] \ar[d, phantom, "\subset", sloped] & F_1 W\ar[d, phantom, "\subset", sloped]  \\ V \ar[r, dashed, "\exists !"] & W. \end{tikzcd} \]

\begin{rem}\label{F1basechange}
    $F_1-$universality is preserved by base change.
\end{rem}

\begin{lem}\label{uniaffisF1}
    The universal affine $k-$vertex algebra $V(\mathfrak g, \ell)$ is $F_1-$universal.
\end{lem}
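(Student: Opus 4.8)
The plan is to verify the universal property of $V(\mathfrak g, \ell)$ directly against the definition of $F_1$-universality, using property \ref{pd} as the engine. First I would identify $F_1 V(\mathfrak g, \ell)$ explicitly: by property a) we have $F_1 V(\mathfrak g, \ell) = V(\mathfrak g,\ell)_0 \oplus V(\mathfrak g,\ell)_1 = k\mathbf 1 \oplus \mathfrak g$, where $\mathfrak g$ is embedded via $a \mapsto a_{-1}\mathbf 1$ as in property b). So an $R$-linear map $\Phi \colon F_1 V(\mathfrak g,\ell)_R \to F_1 W$ (after base change to $R$, which is harmless by Remark \ref{F1basechange}, so I may as well take $R = k$) respecting the $0$-th and $1$-st products and fixing $\mathbf 1$ amounts to: a $k$-linear map $\phi \colon \mathfrak g \to F_1 W$ with $\phi(\mathbf 1$-component$) = 0$, which is a Lie algebra map into $\mathfrak L(W) = F_1 W / F_0 W$ because $\Phi$ respects the $0$-th product (Lemma \ref{Lie}), and which is compatible with the $1$-st product, i.e.\ $(\phi(a), \phi(b))_W = \ell(a,b)\,\mathbf 1$ for all $a,b \in \mathfrak g$ by property \ref{pc} and Lemma \ref{comm}.

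Next I would produce the extension $V(\mathfrak g,\ell) \to W$. Given such a $\Phi$, write $w_a = \phi(a) \in F_1 W$ for $a \in \mathfrak g$ and set $w_+ = \mathbf 1_W$. I claim $W$ becomes a $\widehat{\mathfrak g}$-module with $a_{(n)}$ acting as the operator $(w_a)_n$ on $W$ (extended from the generators $a \in \mathfrak g$), on which $\mathbf c$ acts as $\ell \cdot \id$ and $a_{(n)} w_+ = 0$ for $n \geq 0$. The last condition, $a_{(n)}\mathbf 1_W = (w_a)_n \mathbf 1_W = 0$ for $n \geq 0$, is the creation axiom for $W$. That $\mathbf c$ acts by $\ell$ and that the bracket relations of $\widehat{\mathfrak g}$ hold is exactly a translation of: the commutator formula for the $n$-th products in the vertex ring $W$ applied to $w_a, w_b$, the fact that $\phi$ respects the $0$-th product (giving the $[ab]_{(m+n)}$ term), and the $1$-st-product compatibility $(w_a, w_b)_W = \ell(a,b)\mathbf 1$ together with the filtration bound $(F_1 W)_\ell(F_1 W) \subset F_{1-\ell}W$, which forces $(w_a)_m w_b \in F_0 W = R\mathbf 1$ for $m \geq 1$ and $= 0$ for $m \geq 2$, so that the central term is precisely $m(a,b)\delta_{m,-n}\ell\,\mathbf 1$. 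By property \ref{pd} there is then a unique $\widehat{\mathfrak g}$-map $\psi \colon V(\mathfrak g,\ell) \to W$ with $\mathbf 1 \mapsto \mathbf 1_W$; one checks $\psi$ is a vertex ring homomorphism because it intertwines $Y(a,z)$ with $Y(w_a,z)$ on the generating set $\{a_{(n)}\mathbf 1\}$ and $V(\mathfrak g,\ell)$ is generated by $\mathfrak g$, and $\psi$ is $R$-linear and filtration-preserving (it sends $F_i$ to $F_i$ since $F_i V(\mathfrak g,\ell)$ is spanned by products of $\leq i$ generators). By construction $\psi$ restricts to $\Phi$ on $F_1$.

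Finally, uniqueness of $\psi$ among filtered vertex algebra homomorphisms corestricting to $\Phi$: any such map is determined on $\mathfrak g \subset F_1 V(\mathfrak g,\ell)$ by $\Phi$, hence on all of $V(\mathfrak g,\ell)$ since $\mathfrak g$ generates it as a vertex ring and a vertex ring homomorphism commutes with all $n$-th products; this also matches the uniqueness clause in property \ref{pd}.

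The main obstacle I expect is the second step: carefully extracting from the purely ``product-respecting'' hypotheses on $\Phi$ the precise $\widehat{\mathfrak g}$-module relations on $W$ — in particular pinning down the central term. This is where the filtration axiom $(F_m W)_\ell(F_n W) \subset F_{m+n-\ell-1}W$ does essential work: it is what guarantees the $(w_a)_m w_b$ for $m \geq 1$ land in $F_0 W \simeq R$ and vanish for $m \geq 2$, so that the vertex-ring commutator formula collapses to exactly the loop-algebra bracket with the correct level. Everything else (verifying $\psi$ is a morphism of vertex rings rather than merely of $\widehat{\mathfrak g}$-modules, $R$-linearity, compatibility with filtrations) is routine bookkeeping on generators.
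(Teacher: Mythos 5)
Your proposal is correct and follows essentially the same route as the paper: define a $\widehat{\mathfrak g}$-module structure on $W$ by $a_{(n)} \mapsto \Phi(a)_n$, $\mathbf c \mapsto \ell\,\id_W$, invoke the universal property \ref{pd} of the induced module to extend $\Phi$, check the extension is a vertex ring map on the generating set $\mathfrak g$, and use the spanning description of $F_n V(\mathfrak g,\ell)$ for filtration preservation. One small repair: the vanishing of $\Phi(a)_m\Phi(b)$ for $m \geq 2$ and the value $\ell(a,b)\mathbf 1$ for $m=1$ should be deduced from the hypothesis that $\Phi$ respects the $i$-th products for all $i \geq 0$ (since $a_m b = 0$ in $V(\mathfrak g,\ell)$ for $m\geq 2$), not from the filtration axiom, because a general filtered $W$ in the definition of $F_1$-universality need not satisfy $F_{-1}W = 0$ or $F_0 W \simeq R$.
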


\begin{proof}
    Let $W$ be a filtered $k-$vertex algebra, and let $\Phi \colon F_1 V(\mathfrak g, \ell) \to F_1 W$.
   One shows that 
    \[ \rho \colon a_{(m)} \mapsto \Phi(a)_m, \quad \mathbf c \mapsto \ell \id_W,\]
    defines a $\widehat {\mathfrak g}-$structure on $W$.
    We thus obtain a unique $\widehat {\mathfrak g}-$module map $\Phi \colon V \to W$ extending the given map $F_1 V(\mathfrak g, \ell) \to F_1 W$.
    That it is a vertex algebra map can be checked on a generating set \cite[\@5.7.9]{LL}.
    For any $v \in V(\mathfrak g, \ell)$ and any $a \in V(\mathfrak g, \ell)_1 \simeq \mathfrak g$, we have
    $\Phi(a_n v) = \Phi(a_{(n)} v) = a_{(n)} \Phi(v) = \Phi(a)_n \Phi(v). $
   That $\Phi$ preserves the filtration follows from the fact that the filtered pieces $F_n V(\mathfrak g, \ell)$ are spanned by elements of the form $a^1_{-m_1} \cdots a^r_{-m_r} \mathbf 1$ with $a^i \in F_1 V(\mathfrak g, \ell)$ and $m_1 + \cdots + m_r \leq n$.
    Thus, $V(\mathfrak g, \ell)$ is $F_1-$universal.
\end{proof}

\begin{lem}\label{isoF1res}
    Let $V$ be an $F_1-$universal filtered $R-$vertex algebra such that $R \simeq F_0 V$ and $F_{-1} V = 0$. 
    Then the restriction map identifies $\Aut_{\Rfva}(V)$ with the group $\Aut_R(F_1 V, \sigma_V)$ of $R-$linear bijections that preserve the $0$-th product, the bilinear form $\sigma_V$, and the vacuum vector $\mathbf 1$.
\end{lem}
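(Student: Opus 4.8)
The plan is to show that the restriction map $\operatorname{res}\colon \Aut_{\Rfva}(V) \to \Aut_R(F_1 V, \sigma_V)$, $\varphi \mapsto \varphi|_{F_1 V}$, is a group isomorphism. First I would check it is well-defined: any $\varphi \in \Aut_{\Rfva}(V)$ preserves the filtration in both directions, so it restricts to an $R$-linear self-bijection of $F_1 V$ with inverse $\varphi^{-1}|_{F_1 V}$; by Lemma \ref{charRva}\ref{charRvac} it is $R$-linear and respects all $n$-th products, in particular the $0$-th product on $F_1 V$ and — after the identification $F_0 V \simeq R$ — the $1$-st product $\sigma_V$; and $\varphi(\mathbf 1) = \mathbf 1$. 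I would also record that $\varphi$ acts as the identity on $F_0 V$: under $R \simeq F_0 V$ one has $F_0 V = R\mathbf 1$, so $\varphi(r\mathbf 1) = r\varphi(\mathbf 1) = r\mathbf 1$. That $\operatorname{res}$ is a group homomorphism is clear.

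Injectivity will be immediate from the uniqueness clause of $F_1$-universality: if $\varphi|_{F_1 V} = \id$, then $\varphi$ and $\id_V$ are both filtered $R$-vertex algebra endomorphisms of $V$ co-restricting to $\id_{F_1 V}$, hence $\varphi = \id_V$. For surjectivity, given $\psi \in \Aut_R(F_1 V, \sigma_V)$, the key step is to verify that $\psi$, viewed as a map $F_1 V \to F_1 V$, satisfies the hypotheses of $F_1$-universality, i.e.\ respects the $i$-th products for \emph{every} $i \geq 0$ and fixes $\mathbf 1$. Only the first is not part of the data. For $i = 0$ it is given; for $i \geq 2$ the filtration axiom gives $(F_1 V)_i(F_1 V) \subset F_{1-i}V \subset F_{-1}V = 0$, so both $\psi(u_iv)$ and $\psi(u)_i\psi(v)$ vanish; for $i = 1$, $u_1 v \in F_0 V = R\mathbf 1$, on which $\psi$ is the identity, so $\psi(u_1 v) = u_1 v = \sigma_V(u,v) = \sigma_V(\psi u, \psi v) = \psi(u)_1\psi(v)$ by $\sigma_V$-invariance. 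Then $F_1$-universality produces a unique filtered endomorphism $\tilde\psi$ of $V$ extending $\psi$; doing the same for $\psi^{-1}$ (which again lies in $\Aut_R(F_1 V,\sigma_V)$, using $(F_1 V)_0(F_1 V)\subset F_1 V$ to carry $0$-th products back through $\psi^{-1}$) gives $\widetilde{\psi^{-1}}$, and the composites $\tilde\psi\circ\widetilde{\psi^{-1}}$, $\widetilde{\psi^{-1}}\circ\tilde\psi$ restrict to $\id_{F_1 V}$ and so equal $\id_V$ by uniqueness. Hence $\tilde\psi \in \Aut_{\Rfva}(V)$ and $\operatorname{res}(\tilde\psi) = \psi$.

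I expect the only real obstacle to be the highlighted step in the surjectivity argument: one must see that an element of the ``abstract'' group $\Aut_R(F_1 V,\sigma_V)$, defined purely in terms of the $0$-th product, the form $\sigma_V$, and the vacuum, automatically respects all the higher $i$-th products that $F_1$-universality refers to. This is precisely where the hypotheses $F_{-1}V = 0$ (which annihilates all products $u_i v$ with $i \geq 2$) and $R \simeq F_0 V$ (which identifies $\sigma_V$ with the $1$-st product and forces $\psi$ to fix $F_0 V = R\mathbf 1$) enter; once this is settled, both halves of the isomorphism are formal consequences of the universal property.
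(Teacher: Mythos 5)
Your proof is correct and fills in exactly the details behind the paper's one-line argument: the paper likewise reduces everything to the observation that $u_i v = 0$ for $i \geq 2$ on $F_1 V$ (because $(F_1V)_i(F_1V) \subset F_{1-i}V \subset F_{-1}V = 0$) and that the $1$-st product is $\sigma_V$ valued in $F_0 V = R\mathbf 1$, on which any vacuum-preserving $R$-linear map is the identity, so that $F_1$-universality supplies both the extension and its uniqueness. No changes needed.
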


\begin{proof}
    The claim follows by definition of $F_1-$universality since the $i-$th products are zero on $F_1 V$ for $i \geq 2$ when $F_{-1} V= 0$.
\end{proof}

\begin{lem} \label{F1}
    Let $V$ be a filtered $R-$vertex algebra with $R \simeq F_0 V$ and $F_{-1} V = 0$.
    Denote by $\Aut_R(F_1V)$ the group of $R-$linear bijections that preserve the $0$-th product and the vacuum vector $\mathbf 1$. 
    Consider $\mathfrak L(V)^\ast = \Hom_R(\mathfrak L(V),R)$ as an abelian group.
    Finally, suppose the exact sequence of $R-$modules
    \[ \begin{tikzcd} 0 \ar[r] & R \mathbf 1 \ar[r, "\subset"] & F_1 V \ar[r, "p"] & \mathfrak L(V) \ar[l, dashed, bend left = 60, "j"] \ar[r] & 0 \end{tikzcd} \]
    is split, and define $\alpha \colon \mathfrak L(V) \times \mathfrak L(V) \to R$ via $\alpha(x,y) \mathbf 1 = j(x)_0 j(y) - j [xy]$.
    Then:
    \begin{enumerate}[label = \textup{\alph*)}]
	\item \label{F1a} The map
	\begin{align*} \Aut_R(F_1 V) &\to \mathfrak L(V)^\ast \rtimes \Aut_{\RLie}(\mathfrak L(V)), \\
    \Phi &\mapsto (\Phi j \overline \Phi^{\,-1} \!-j, \overline \Phi), \quad \overline \Phi = p \Phi j, \end{align*}
	is a group homomorphism into the semidirect product with multiplication $(g, \psi) (f, \varphi) = (g + f \psi^{-1}, \psi \varphi)$,
    where $f,g \in \mathfrak L(V)^\ast$ and $\varphi, \psi \in \Aut(\mathfrak L(V))$.
    \end{enumerate}
    Furthermore, under this map,
    \begin{enumerate}[resume, label = \textup{\alph*)}]
	\item \label{F1b}  $\Aut_R(F_1 V)$ is identified with the subgroup of pairs $(f, \varphi)$ satisfying
	\begin{equation}\label{semisub} \alpha(x,y) - \alpha(\varphi^{-1} x, \varphi^{-1} y) = f[xy],\, \text{and} \end{equation}
	\item \label{F1c}  $\Aut_R(F_1 V, \sigma_V)$ is identified with the subgroup of pairs $(f, \varphi)$ where \eqref{semisub} holds and $\varphi$ preserves $\sigma_V$.
    \end{enumerate}
\end{lem}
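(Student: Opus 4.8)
The splitting $j$ gives a direct sum decomposition $F_1 V = R\mathbf 1 \oplus j(\mathfrak L(V))$, and the crucial observation is that on $F_1 V$ both the $0$-th product and the $1$-st product vanish as soon as one of the two arguments lies in $F_0 V = R\mathbf 1$: indeed $\mathbf 1_n = 0$ for $n \geq 0$ together with \eqref{eqchRva} gives $(r\mathbf 1)_n = 0$ for $n \geq 0$, while the creation axiom gives $u_n\mathbf 1 = 0$ for $n \geq 0$. Hence all the relevant structure on $F_1 V$ is encoded on $j(\mathfrak L(V)) \cong \mathfrak L(V)$: by definition of $\alpha$ one has $j(x)_0 j(y) = \alpha(x,y)\mathbf 1 + j[xy]$, and by Lemma \ref{comm} the value $\sigma_V(j(x), j(y))$ is the induced symmetric invariant form on $\mathfrak L(V)$.

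\textbf{Part \ref{F1a}.} Any $\Phi \in \Aut_R(F_1 V)$ is $R$-linear and fixes $\mathbf 1$, hence fixes $R\mathbf 1$ pointwise, so it descends to a bijection $\overline\Phi$ of $\mathfrak L(V)$; since $\Phi$ preserves the $0$-th product, $\overline\Phi$ respects the induced Lie bracket (Lemma \ref{Lie}), i.e.\ $\overline\Phi \in \Aut_{\RLie}(\mathfrak L(V))$, and $p\Phi = \overline\Phi p$ yields $\overline\Phi = p\Phi j$. Because $p\,(\Phi j\overline\Phi^{-1}) = \overline\Phi p j\overline\Phi^{-1} = \id$, the map $f_\Phi := \Phi j\overline\Phi^{-1} - j$ takes values in $\ker p = R\mathbf 1 \cong R$, so it is a well-defined element of $\mathfrak L(V)^\ast$, and it is $R$-linear because $\Phi, j, \overline\Phi^{-1}$ are. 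For the homomorphism property, $\overline{\Phi\Psi} = \overline\Phi\,\overline\Psi$ gives
\[ f_{\Phi\Psi} = \Phi\Psi j\,\overline\Psi^{-1}\overline\Phi^{-1} - j = \Phi(f_\Psi + j)\overline\Phi^{-1} - j = f_\Psi\overline\Phi^{-1} + f_\Phi, \]
using once more that $\Phi$ fixes $R\mathbf 1$; this is exactly the product $(f_\Phi,\overline\Phi)(f_\Psi,\overline\Psi)$ in the stated semidirect product. The assignment is injective because $\Phi$ is recovered from the pair through $\Phi(r\mathbf 1 + j(x)) = (r + f_\Phi(\overline\Phi x))\mathbf 1 + j(\overline\Phi x)$.

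\textbf{Parts \ref{F1b} and \ref{F1c}.} Conversely, for any $(f,\varphi) \in \mathfrak L(V)^\ast \rtimes \Aut_{\RLie}(\mathfrak L(V))$ I would define $\Phi_{f,\varphi}(r\mathbf 1 + j(x)) = (r + f(\varphi x))\mathbf 1 + j(\varphi x)$; one checks at once (via the splitting) that this is an $R$-linear bijection fixing $\mathbf 1$ whose image under the map of \ref{F1a} is $(f,\varphi)$, so by the vanishing remarks it lies in $\Aut_R(F_1 V)$ if and only if it preserves the $0$-th product, a condition that need only be tested on $u = j(x)$, $v = j(y)$. Expanding both $\Phi_{f,\varphi}(j(x)_0 j(y))$ and $\Phi_{f,\varphi}(j(x))_0\,\Phi_{f,\varphi}(j(y))$ using $j(x)_0 j(y) = \alpha(x,y)\mathbf 1 + j[xy]$ and $\varphi[xy] = [\varphi x,\varphi y]$ reduces this to the identity $\alpha(x,y) + f(\varphi[xy]) = \alpha(\varphi x,\varphi y)$, which after substituting $\varphi^{-1}x, \varphi^{-1}y$ for $x,y$ becomes precisely \eqref{semisub}; combined with the injectivity from \ref{F1a}, this shows the image of the homomorphism is the subgroup cut out by \eqref{semisub}, proving \ref{F1b}. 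For \ref{F1c}, the $R\mathbf 1$-contributions again drop out, so $\sigma_V(\Phi_{f,\varphi}j(x), \Phi_{f,\varphi}j(y)) = \sigma_V(j(\varphi x), j(\varphi y))$; hence $\Phi_{f,\varphi}$ preserves $\sigma_V$ iff $\varphi$ preserves the induced form on $\mathfrak L(V)$, and intersecting with the subgroup from \ref{F1b} gives the claim. The only point demanding care is the systematic elimination of all $R\mathbf 1$-terms in the $0$-th and $1$-st products via the creation axiom and \eqref{eqchRva}, together with keeping the inverse-twists in the semidirect product consistent; everything else is formal bookkeeping.
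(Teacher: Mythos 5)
Your proof is correct and follows exactly the route the paper intends: its own proof consists of the single remark that the computations are straightforward once one notes that $R\mathbf 1 = F_0 V$ annihilates the $0$-th product, which is precisely the vanishing observation you isolate and then exploit via the splitting $F_1 V = R\mathbf 1 \oplus j(\mathfrak L(V))$. The cocycle computation, the explicit inverse construction $\Phi_{f,\varphi}$, and the reduction of \eqref{semisub} and of the $\sigma_V$-condition to $j(\mathfrak L(V))$ all check out, so this is a faithful filling-in of the details the paper omits.
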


\begin{proof} These are straightforward calculations when taking into account that $ R\mathbf 1 = F_0 V$ annihilates the $0$-th product. 
\end{proof}

\begin{proof}[Proof of Theorem \ref{affheis}] Let $V =  V(\mathfrak g, \ell)$ and define $j \colon \mathfrak L(V_R) \simeq \mathfrak g \otimes_k R \subset F_1V$ to be the section induced by $\mathfrak g \simeq V(\mathfrak g, \ell)_1$. From the last lemma we obtain that:
\begin{enumerate}[label = \textup{\alph*)}]
        \item \label{F1unia} If $\mathfrak g = \mathfrak h$ is abelian and $\ell \neq 0$ then \[ \Aut_R(F_1(V_R), \sigma_V) \simeq R^n \rtimes O(\mathfrak h \otimes_k \ker D),\]
        where $D = D_1 \colon R \to R$ is the canonical derivation.
        \item \label{F1unib} If $\mathfrak g$ is simple then \[ \Aut_R(F_1(V_R), \sigma_V) \simeq \Aut_{\RLie}(\mathfrak g \otimes_k R).\]
    \end{enumerate}

    The $R-$vertex algebra $V_R = V(\mathfrak g, \ell) \otimes_k R$ is $F_1-$universal by Lemmas \ref{uniaffisF1} and \ref{F1basechange}.
    Thus, Lemma \ref{isoF1res} applies and, together with a) and b) above, shows that we have the isomorphisms prescribed by the theorem ``on $R-$points.'' It remains to verify functoriality.
    For this one should note that under the identifications $F_1(V_R) \simeq (F_1 V(\mathfrak g, \ell)) \otimes_k R$ and $\mathfrak L(V_R) \simeq \mathfrak g \otimes_k R$ (Lemma \ref{Lie}) we can make both sides of the map 
    \begin{equation}\label{mapfrom46} \Aut_R(F_1 (V_R)) \to \mathfrak L(V_R)^\ast \rtimes \Aut_{\RLie}(\mathfrak L(V_R))\end{equation}
    from Lemma \ref{F1} into a functor on $R.$  Choosing the lift $j$ to be the $R-$extension of $\mathfrak g \to F_1V(\mathfrak g,\ell)$ makes \eqref{mapfrom46} into a natural transformation.
    
    Finally, in the case that $\mathfrak g = \mathfrak h$ is abelian,  the identification $\mathfrak L(V_R)^\ast \simeq \mathfrak h \otimes_k R$ by means of the bilinear form is functorial.
\end{proof}

\section{Descent theory} 
\subsection{Twisted forms of vertex algebras} \label{tfva}

Let $R$ be a commutative vertex ring, and $V$  a fixed  $R-$vertex algebra. If $S$ is an extension of $R$, we say that an $R-$vertex algebra $W$ {\it is an $S/R-$twisted form of}  $V,$ if the $S-$vertex algebras $W \otimes_R S$ and $V \otimes_R S$ are isomorphic. In this case we also say that $S$ {\it trivializes} $W.$

 If the trivializing $S$ above can be chosen to be a faithfully flat and finitely presented (fppf) extension of $R$ (as rings), then we say that $W$ is a {\it twisted form of} $V.$ If $S$ can be chosen to be finite and \'etale, we say that the twisted form under consideration is {\it isotrivial.}

The following is an extremely important example of isotrivial twisted forms, one that we will revisit in Section \ref{app}.

\subsection{Loop vertex algebras}\label{loopva}

    Let $k$ be a field of characteristic zero,
     $V$ be a $k-$vertex algebra, and let $g \in \Aut(V)$ be an automorphism of finite period $m \in \mathbb Z_{> 0}$.
    Assume that $k$ contains a primitive $m-$th root of unity $\zeta.$ Then $g$ is diagonalizable with eigenspaces $V^{g,r} = \{ v \in V \mid gv = \zeta^{-r} v \}$, $0 \leq r < m$.

    The ring $S_m = k[t^{\pm\frac{1}{m}}]$ carries the structure of a commutative $k-$vertex algebra defined by the derivation $D = \frac{d}{dt}$; the vertex operators are given by 
    \[Y(a(t),z)b(t) = \big( e^{z \frac{d}{dt}} a(t)\big) b(t) = a(t+z)b(t).\]
    The $k-$algebra homomorphism $\gamma \colon S_m \to S_m$ defined by $t^{\frac{1}{m}} \mapsto \zeta t^{\frac{1}{m}}$ commutes with $D$ and therefore is a $k-$vertex algebra automorphism.
    
    On the tensor product $V \otimes_k S_m$ we thus have the automorphism $g \otimes \gamma$ whose vertex subalgebra of fixed points is the \emph{twisted loop vertex algebra} \[L(V,g) = \bigoplus_{0 \leq r < m} V^{g,r} \otimes_{k} t^{\frac{r}{m}} k[t^{\pm 1}] .\]
    Observe that $\gamma$ restricts to the identity on $R = k[t^{\pm 1}] \subset S_m.$ Thus $g \otimes \gamma$ is $R-$linear and $L(V,g)$ is an $R-$vertex algebra.

    The inclusion $L(V,g) \subset V \otimes_k S_m$ of $R-$vertex algebras induces an $S_m-$map \[ L(V,g) \otimes_R S_m \to V \otimes_k S_m, \quad v \otimes q \otimes p \mapsto v \otimes pq. \] 
    This map is an isomorphism, hence $L(V,g)$ is a twisted form of $V \otimes_k R$ trivialized by $S_m.$
Since  $S_m/R$ is a finite \'etale (in fact, Galois) extension, $L(V,g)$ is isotrivial.

 \subsection{Faithfully flat descent of twisted forms}\label{ffdesc}

	Let $S/R$ be an extension of commutative vertex rings.
	Given a group functor $\mathbf G \colon \Rcva \to \grp$ we  define in the usual fashion \cite{Waterhouse} the set of {\it cocycles} $Z^1(S/R, \mathbf G)$ and the corresponding  \textit{non-abelian cohomology set} $H^1(S/R, \mathbf G);$ a pointed set whose distinguished element we denote by $1.$ Like in the classical theory {\it loc.\ cit.}, see also \cite{Car}, we have:

\begin{thm}\label{descent}
    Let $S/R$ be an extension of vertex rings such that $S$ is faithfully flat as an $R-$module (cf.\@ Lemma \ref{charRva}), and let $V$ be an $R-$vertex algebra.
    Then we have a one-to-one correspondence between the $R-$isomorphism classes of $S/R-$forms of $V$ and the set $H^1(S/R, \BAut(V))$.
    The class of $V$ corresponds to the distinguished element $1$.
\end{thm}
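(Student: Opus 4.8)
The plan is to mimic the classical faithfully flat descent argument (as in \cite{Waterhouse}), adapted to the vertex ring setting, using the fact that all the relevant structure — the $n$-th products, the vacuum, the structure map $\iota$, the Hasse-Schmidt derivation, the filtration — is either preserved or ``glued'' along descent data. The two maps to construct are: from an $S/R$-form $W$ to a cocycle, and from a cocycle to an $S/R$-form; one then checks these are mutually inverse on the respective (iso)classes.

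First I would set up notation for the base change functors. Write $S_2 = S \otimes_R S$ and $S_3 = S \otimes_R S \otimes_R S$, which are objects of $\Rcva$ (the tensor product of commutative vertex rings from Section \ref{ES}), with the three (resp.\ two) coprojections $p_i \colon S \to S_2$ and $p_{ij} \colon S_2 \to S_3$. For an $R$-vertex algebra $W$, base change along $S \to S_2$ gives $(W \otimes_R S) \otimes_S S_2 \simeq W \otimes_R S_2$, and similarly over $S_3$; the associativity and compatibility of these identifications is exactly the cocycle formalism. Given an $S/R$-form $W$, choose an $S$-vertex algebra isomorphism $\theta \colon W \otimes_R S \xrightarrow{\sim} V \otimes_R S$. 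Pulling $\theta$ back along the two maps $p_1, p_2 \colon S \to S_2$ yields two isomorphisms $V \otimes_R S_2 \to V \otimes_R S_2$ whose composite $u_\theta = (p_1^\ast \theta) \circ (p_2^\ast \theta)^{-1}$ lies in $\BAut(V)(S_2)$; the cocycle condition $p_{13}^\ast u_\theta = p_{12}^\ast u_\theta \cdot p_{23}^\ast u_\theta$ in $\BAut(V)(S_3)$ follows formally from the fact that all three pullbacks of $\theta$ to $S_3$ appear. Changing $\theta$ by an $S$-automorphism of $V \otimes_R S$ changes $u_\theta$ by a coboundary, so the class $[u_\theta] \in H^1(S/R, \BAut(V))$ is well defined and depends only on the $R$-isomorphism class of $W$; the form $V \otimes_R R$ itself visibly gives the trivial class.

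Conversely, given a cocycle $u \in Z^1(S/R, \BAut(V)) \subset \BAut(V)(S_2) = \Aut_{S_2\text{-}\mathsf{va}}(V \otimes_R S_2)$, I would use faithfully flat descent \emph{for modules} (valid since $S$ is faithfully flat over $R$ by Lemma \ref{charRva}, and descent of quasi-coherent modules along faithfully flat ring maps is standard) to descend the underlying $S$-module $V \otimes_R S$ equipped with the descent datum $u$ to an $R$-module $W$. The point is then that $u$, being an isomorphism of $S_2$-\emph{vertex algebras}, is compatible with the $n$-th products, the vacuum vector, and the structure map, so each of these descends: concretely, $W = \{ x \in V \otimes_R S : p_1^\ast(x) = u(p_2^\ast(x)) \}$, and one checks that $x_n y$, $\mathbf 1$, and $\iota(r) = r\mathbf 1$ all satisfy the same equalizer condition (using that $u$ is a vertex ring map and $R$-linear, cf.\ Lemma \ref{charRva}\ref{charRvac}). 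This makes $W$ an $R$-vertex algebra, and by construction $W \otimes_R S \simeq V \otimes_R S$ canonically as $S$-vertex algebras (faithful flatness gives that the natural map is an isomorphism), so $W$ is an $S/R$-form of $V$. Finally one verifies that the two assignments $W \mapsto [u_\theta]$ and $u \mapsto W$ are mutually inverse up to the respective equivalences — again a formal diagram chase once the structure has been seen to descend.

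The main obstacle — or rather, the point requiring genuine verification rather than a formal citation — is checking that the vertex ring structure genuinely descends, i.e.\ that the equalizer submodule $W$ is closed under the $n$-th products and that the resulting operations satisfy the vertex ring axioms (regularity, creation, Jacobi). Regularity and creation are inherited pointwise from $V \otimes_R S$; the Jacobi identity is a universally quantified polynomial identity in the $n$-th products, so it holds in $W$ because it holds in $V \otimes_R S$ and $W \hookrightarrow V \otimes_R S$ is a monomorphism of the relevant algebraic structure. The subtler bookkeeping is that the $n$-th product on $V \otimes_R S$ is only $S$-linear in the \emph{second} variable (the Hasse-Schmidt correction terms in \eqref{eqchRva}), so one must confirm the descent datum $u$ interacts correctly with the derivation $\underline D$ — but this is precisely guaranteed by $u$ being a morphism in $S_2\text{-}\mathsf{va}$, since such morphisms commute with the canonical Hasse-Schmidt derivations (see the Notation following Lemma \ref{charRva}). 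Once these compatibilities are in place, everything else is the standard faithfully flat descent bookkeeping.
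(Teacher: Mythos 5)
Your argument is correct and is exactly the one the paper intends: the paper gives no written proof but defers to the classical faithfully flat descent argument of \cite{Waterhouse}, and your write-up is that argument transported to the vertex setting, with the one genuinely new point (that the equalizer submodule is closed under the $n$-th products despite their failure to be $S$-linear in the first variable) correctly identified and resolved by the fact that the coprojections and the descent datum are vertex ring morphisms. No gaps.
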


    \subsection{Galois cohomology}

Let $S/R$ be an extension of commutative vertex rings, and assume that $S/R$ is (finite)  Galois with Galois group $\Gamma$ (see \cite[\@5.6--7]{Knus} and also \cite[\@1.3]{CHR} for a list of equivalent characterizations of Galois extension of rings). Relevant to us is that $S/R$ is faithfully flat.
    \begin{ex}
    The extension $S_m/R$ from Section \ref{loopva} is Galois with Galois group $\mathbb Z/m \mathbb Z$ where the generator $1$ acts via $t^{\frac{1}{m}} \mapsto \zeta t^{\frac{1}{m}}$, for $\zeta$ a fixed primitive $m-$th root of unity.
\end{ex}

For a filtered $R-$vertex algebra $V,$  the Galois group $\Gamma$ acts on  $\BAut(V)(S)$ by group automorphisms as follows: If $h \in \BAut(V)(S)$ and $\gamma \in \Gamma$, then 
\[ {}^\gamma h = (\id \otimes \gamma) h (\id \otimes \gamma^{-1}).\] 
This action allows us to define the Galois cohomology pointed set  $H^1(\Gamma, \BAut(V)(S))$ which, in turn, agrees with the faithfully flat cohomology defined in Section \ref{ffdesc} (see \cite{Waterhouse} or \cite{Knus}): 
\[ H^1 (S/R, \BAut(V)) \simeq H^1(\Gamma, \BAut(V)(S)). \]

\subsection{$\widehat S/R-$forms and continuous cohomology} \label{H1hat}

Assume $k$ is of characteristic $0$ and contains all primitive roots of unity. As in \cite{KLP} we want to compare $R-$isomorphism classes of twisted loop algebras (see Section \ref{loopva}) all at once.     If we set \[\widehat S = \textstyle\varinjlim_{m} S_m = k[t^q \mid q \in \mathbb Q]\] then any loop algebra $L(V,g)$ is visibly an $\widehat S/R-$twisted form of $V \otimes_k R.$ 
      
The extension $\widehat S/R$ is a limit of Galois extensions. A suitable finiteness condition on $V$ allows us to compute $\widehat S/R-$forms in terms of \emph{continuous group cohomology} $H^1_{\operatorname{ct}}$. 

\begin{thm} Let $V$ be a finitely generated  $R-$vertex algebra. Then.
\begin{enumerate}[label = \textup{\alph*)}]
	\item \label{ct1} The action of the profinite group $\widehat{\mathbb Z}$ on the discrete group $\BAut(V)(\widehat S)$ is continuous.
        
        \item \label{ctH1}  We have a bijection of pointed sets $H^1(\widehat S/R, \BAut(V)) \simeq H^1_{\operatorname{ct}} (\widehat {\mathbb Z}, \BAut(V)(\widehat S)).$   \qed \end{enumerate}  \end{thm}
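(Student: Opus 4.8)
The plan is to extract from the finiteness hypothesis on $V$ a single lemma — that automorphisms over the infinite extension $\widehat S$ are already defined over a finite layer $S_m$ — and then deduce both parts by comparing filtered colimits. So the first step is this finiteness lemma: \emph{if $T/R$ is a filtered colimit $T = \varinjlim_\alpha T_\alpha$ of commutative vertex ring extensions and $W$ is a finitely generated $R$-vertex algebra, then the canonical map $\varinjlim_\alpha \BAut(W)(T_\alpha) \to \BAut(W)(T)$ is a bijection.} To prove it I would fix a finite $R$-vertex algebra generating set $v_1, \dots, v_r$ of $W$ and first observe, using $\mathbf 1_n = 0$ for $n \geq 0$ and $D_j(1) = 0$ for $j \geq 1$, that products of elements of $W \otimes 1 \subset W \otimes_R T'$ again lie in $W \otimes 1$ and match those in $W$, and that $r(w\otimes 1) = rw \otimes 1$; hence $v_1 \otimes 1, \dots, v_r \otimes 1$ generate $W \otimes_R T'$ as a $T'$-vertex algebra for \emph{every} extension $T'/R$, so any vertex algebra endomorphism of $W \otimes_R T'$ over $T'$ is determined by the images of the $v_i \otimes 1$ and has image the $T'$-subalgebra they generate. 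Then for $h \in \BAut(W)(T)$ the finitely many elements $h(v_i \otimes 1)$ and $h^{-1}(v_i \otimes 1)$ lie in $\varinjlim_\alpha W \otimes_R T_\alpha$, hence in a common $W \otimes_R T_\alpha$, so $h$ and $h^{-1}$ stabilize $W \otimes_R T_\alpha$ and $h$ is the base change of an automorphism there; injectivity of the transition maps is faithfully flat descent.

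For part a), I would use that the open normal subgroups of $\widehat{\mathbb Z}$ are exactly the $m\widehat{\mathbb Z} = \ker(\widehat{\mathbb Z} \twoheadrightarrow \mathbb Z/m\mathbb Z)$, and that $\widehat{\mathbb Z}$ acts on $S_m \subset \widehat S$ through $\widehat{\mathbb Z}/m\widehat{\mathbb Z} = \operatorname{Gal}(S_m/R)$. By the finiteness lemma with $T = \widehat S = \varinjlim_m S_m$, a given $h \in \BAut(V)(\widehat S)$ restricts to an automorphism of $V \otimes_R S_m$ for some $m$. For $\gamma \in m\widehat{\mathbb Z}$ the map $\id \otimes \gamma$ is the identity on $V \otimes_R S_m$, so $h$ and ${}^\gamma h = (\id \otimes \gamma)h(\id \otimes \gamma^{-1})$ are $\widehat S$-linear maps agreeing on the $\widehat S$-module generating set $V \otimes_R S_m$, whence ${}^\gamma h = h$; thus the stabilizer of $h$ contains the open subgroup $m\widehat{\mathbb Z}$, which is exactly continuity of the action on the discrete group $\BAut(V)(\widehat S)$.

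For part b), recall $H^1_{\operatorname{ct}}(\widehat{\mathbb Z}, \BAut(V)(\widehat S)) = \varinjlim_m H^1(\mathbb Z/m\mathbb Z, \BAut(V)(\widehat S)^{m\widehat{\mathbb Z}})$ by definition of continuous cohomology. Combining the finiteness lemma with Galois descent of morphisms for the finite extension $S_{m'}/S_m$ — an $h$ fixed by $m\widehat{\mathbb Z}$ restricts to some $V \otimes_R S_{m'}$ on which it commutes with $\operatorname{Gal}(S_{m'}/S_m)$, hence descends to $V \otimes_R S_m$ — one identifies $\BAut(V)(\widehat S)^{m\widehat{\mathbb Z}} \simeq \BAut(V)(S_m)$, compatibly in $m$, so the left-hand colimit becomes $\varinjlim_m H^1(\mathbb Z/m\mathbb Z, \BAut(V)(S_m))$, which by the identification $H^1(S_m/R, \BAut(V)) \simeq H^1(\mathbb Z/m\mathbb Z, \BAut(V)(S_m))$ from the finite Galois case equals $\varinjlim_m H^1(S_m/R, \BAut(V))$. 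For the descent side, since tensor products commute with filtered colimits and the diagonal $\{S_m \otimes_R S_m\}_m$ is cofinal in $\{S_m \otimes_R S_{m'}\}_{m,m'}$, one has $\widehat S \otimes_R \widehat S = \varinjlim_m S_m \otimes_R S_m$ and similarly for the triple tensor product; applying the finiteness lemma to the finitely generated $S_m$-vertex algebras $V \otimes_R S_m$ shows $\BAut(V)$ commutes with these colimits, so $Z^1(\widehat S/R, \BAut(V)) = \varinjlim_m Z^1(S_m/R, \BAut(V))$ and, after passing to coboundary orbits, $H^1(\widehat S/R, \BAut(V)) = \varinjlim_m H^1(S_m/R, \BAut(V))$. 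The two colimits agree, giving the bijection, which is pointed because at each level the trivial cocycle represents the class of $V$.

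The main obstacle is the finiteness lemma — equivalently, that a descent $1$-cocycle over the infinite extension $\widehat S$ already comes from a finite layer $S_m$. This is the one place the hypothesis that $V$ is finitely generated is used, and it rests on the elementary but essential fact that a finite generating set base-changes to a finite generating set. Everything after that is bookkeeping: matching the colimit defining $H^1_{\operatorname{ct}}$ with the colimit of the finite Galois $H^1$'s, and checking the naturality in $m$ (inflation/restriction compatibility) of the identification with $H^1(\mathbb Z/m\mathbb Z, \BAut(V)(S_m))$, which is classical.
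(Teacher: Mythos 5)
Your proposal is correct and follows exactly the route the paper intends (the theorem is stated with its proof omitted, following the analogous treatment in \cite{KLP}): the finite generation of $V$ is used to show that every automorphism, and every descent cocycle, over $\widehat S = \varinjlim_m S_m$ is already defined over some finite layer $S_m$, after which both $H^1(\widehat S/R, \BAut(V))$ and $H^1_{\operatorname{ct}}(\widehat{\mathbb Z}, \BAut(V)(\widehat S))$ collapse to the same filtered colimit $\varinjlim_m H^1(S_m/R, \BAut(V))$. The key finiteness lemma, the open-stabilizer argument for a), and the identification $\BAut(V)(\widehat S)^{m\widehat{\mathbb Z}} \simeq \BAut(V)(S_m)$ via finite Galois descent are all as expected, so there is nothing to correct.
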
 
Here $\widehat{\mathbb Z} := \langle \hat{1} \rangle$ denotes the profinite completion of $\Z,$ and its action on $\widehat S$ is induced from the Galois actions of $\mathbb Z/m \mathbb Z$ on $S_m/R$ so that $\hat{1} \colon t^{\frac{1}{m}} \mapsto \zeta_m t^{\frac{1}{m}}$, where we now choose our primitive $m-$th root of unity  $\zeta_m$ to be compatible, namely such that $\zeta_{m \ell}^\ell = \zeta_m$ holds for all $m, \ell \in \mathbb Z_{\geq 1}$.

\subsection {Sheaves} \label{sheaf}
Let us begin by recalling some definitions (see \cite{DeGa} Ch.III \S1 for details) adjusted to our setting. Let $R$ be a commutative vertex ring and consider the category $\Rcva.$ Let $S$ be a commutative $R-$vertex algebra; i.e.\@ an object of $\Rcva.$ A {\it covering (or cover) of} $S$ is a commutative $S-$vertex algebra $S'$ that is faithfully flat and finitely presented (as a ring extension). The category $\Rcva$ together with the family of coverings defined above is a {\it site} \cite[Tag 00VH]{SP} which we call the {\it flat site of} $R$ and denote by $\Rcva_{\rm fl}.$
\medskip

A functor $\BF \colon \Rcva \to \set$ is an {\it  sheaf on} $\Rcva_{\rm fl}$ if   conditions \ref{f1} and \ref{f2} below hold.
\begin{enumerate}[label = {\textup{(F\arabic*)}}]
\item \label{f1} The canonical map $\BF(\prod_{i = 1}^n R_i) \to \prod_{i = 1}^{n} \BF(R_i)$ is bijective for all $R_1, \dots , R_n$ in $\Rcva.$
\item \label{f2} For all $S$ in $\Rcva$, and for all covers $S'$ of $S$  the sequence \[\begin{tikzcd}
 \BF(S) \ar[r] & \BF(S') \ar[r,shift left=.75ex]
  \ar[r,shift right=.75ex,swap] & \BF(S'\otimes_S S')
  \end{tikzcd}\]
 is exact.
 \end{enumerate}


\begin{lem}\label{Autsheaf} Let $R$ be a commutative vertex ring. 
	\begin{enumerate}[label = \textup{\alph*)}]
\item If $V$ and $W$ are filtered $R-$vertex algebras, then $\BHom(V,W)$ and $\BAut(V)$ are group sheaves on $\Rcva_{\rm fl}.$

\item If $X$ is a vertex scheme, its functor of points $h_X$ is a sheaf on $\Rcva_{\rm fl}.$
\item For a filtered $R-$vertex algebra $V$, the functor $V_a$ that attaches to an $S \in \Rcva$ the $S-$vertex algebra $V \otimes_R S$ is a sheaf of vertex algebras on $\Rcva_{\rm fl}$. See \cite[\@I Prop.\ 4.6.2]{SGA3} for the relation between $V_a$ and $\widetilde V$ from Example \ref{tildeV}.
	\qed
\end{enumerate}
\end{lem}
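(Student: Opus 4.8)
The plan is to verify the sheaf conditions \ref{f1} and \ref{f2} for each of the three functors, reducing everything to the known faithfully flat descent for modules (and rings) together with the fact, recorded in Lemma \ref{charRva}, that an $R$-vertex algebra is the same thing as a vertex ring equipped with a compatible $R$-module structure. Throughout, the key point is that all the relevant data---the underlying abelian group of $\BHom(V,W)(S)$, the automorphisms in $\BAut(V)(S)$, the elements of $h_X(S)$, and the vertex algebra $V\otimes_R S$ itself---are built from $S$-modules and $S$-linear (or $S$-vertex-algebra) maps, so their behaviour under products and under a cover $S'/S$ is controlled by ordinary faithfully flat descent.

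First I would treat (F1). For a finite family $R_1,\dots,R_n$ in $\Rcva$ with product $P=\prod_i R_i$, one has $V\otimes_R P\simeq\prod_i(V\otimes_R R_i)$ as $P$-modules, and since idempotents are central and preserved by vertex ring maps, this is an isomorphism of $P$-vertex algebras; the Hasse--Schmidt derivation on $P$ is the product derivation, so the identification is compatible with the vertex ring structure. Hence a filtered vertex algebra homomorphism $V\otimes_R P\to W\otimes_R P$, an automorphism of $V\otimes_R P$, or an $R$-scheme morphism $\Spec(P)\to X$, decomposes uniquely into a tuple indexed by the factors; this gives the bijection in \ref{f1} for $\BHom(V,W)$, $\BAut(V)$, $h_X$, and also exhibits $V_a$ as a sheaf on products. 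For $h_X$ when $X$ is not affine one reduces to the affine case by covering $X$.

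Next comes (F2), the heart of the matter. Fix $S$ in $\Rcva$ and a cover $S'/S$, so $S'$ is faithfully flat and finitely presented over $S$, and set $S''=S'\otimes_S S'$. For $\BHom(V,W)$: write $V_S=V\otimes_R S$, etc. An element of $\BHom(V,W)(S)$ is in particular an $S$-linear map $V_S\to W_S$, and by classical faithfully flat descent for modules the diagram $\Hom_S(V_S,W_S)\to\Hom_{S'}(V_{S'},W_{S'})\rightrightarrows\Hom_{S''}(V_{S''},W_{S''})$ is exact; one must check that an $S'$-linear map which becomes a filtered vertex ring homomorphism after base change to $S'$ and which is an equalizer element descends to a filtered vertex ring homomorphism over $S$. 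This is immediate: the conditions ``$f(u_nv)=f(u)_nf(v)$'', ``$f(\mathbf 1)=\mathbf 1$'', and ``$f(F_iV_S)\subset F_iW_S$'' are detected after the faithfully flat base change $S\to S'$ (using that $F_iW_S\to F_iW_{S'}$ is injective, and that the filtration on a base change is the image of the base-changed filtration, which for flat $S'/S$ is exact). For $\BAut(V)$ one additionally descends the inverse, which is automatic since an $S$-linear endomorphism that is bijective after faithfully flat base change is itself bijective. For $h_X$ the exactness of \ref{f2} is the statement that $X$, viewed via its functor of points on the flat site of $R$, is a sheaf; when $X=\Spec(B)$ is an affine vertex scheme this is faithfully flat descent for the commutative vertex ring $B$, i.e.\ for rings equipped with an iterative Hasse--Schmidt derivation, and the general case glues along an affine cover. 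Finally, $V_a$ is a sheaf of vertex algebras because $S\mapsto V\otimes_R S$ is, level by level on the underlying modules and the $n$-th products, exactly faithfully flat descent for $S$-modules and $S$-multilinear maps.

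The main obstacle---or rather the point requiring the most care---is checking that the \emph{vertex ring structure} (the $n$-th products and the Hasse--Schmidt derivation), not merely the underlying module structure, descends along $S'/S$, and that the \emph{filtration} is compatible with descent. Both reduce to the single principle that a morphism of $S$-modules between base changes of $R$-modules which satisfies a polynomial/multilinear identity after the faithfully flat base change $S\to S'$ already satisfies it over $S$, because the natural map $M\to M\otimes_S S'$ is injective for the relevant modules $M$ (kernels and images of $S$-multilinear maps, which behave well under the flat extension $S'/S$). Once this principle is invoked, conditions \ref{f1} and \ref{f2} follow formally, and the relation between $V_a$ and $\widetilde V$ is the cited statement from \cite{SGA3}. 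Hence all three functors are sheaves on $\Rcva_{\mathrm{fl}}$, with $\BHom(V,W)$ and $\BAut(V)$ group sheaves since the group operations are defined pointwise and commute with the restriction maps. \qed
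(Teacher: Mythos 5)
Your proposal is correct and fills in exactly the routine argument the paper suppresses (the lemma is stated with a \qed and no written proof): reduce \ref{f1} to the decomposition of $V\otimes_R\prod_i R_i$ by idempotents, and reduce \ref{f2} to the exactness of the Amitsur sequence for a faithfully flat cover, checking that the $n$-th products, vacuum, Hasse--Schmidt derivations and filtration are all detected after faithfully flat base change. This is the standard descent argument the authors are implicitly invoking, so there is nothing to flag.
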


\subsection{Torsors}\label{torsor}
Let $\BH$ be a group sheaf over $R.$ An $\BH-${\it sheaf torsor over $R$} is an $R-$sheaf of sets $\mathbf E$ together with a right action of $\BH$ for which there exists a covering $S$ of $R$ such that
\begin{equation}\label{tor1}
\mathbf E|_{S} \simeq \BH|_{S}.
\end{equation}
Here and elsewhere for all $S \in \Rcva$ we denote by $\mathbf F|_S$  the restriction of $\mathbf F$ to the flat site of $\Scva.$  The isomorphism \eqref{tor1} is one of sheaves with $\BH-$action. In this situation we say that $S$ {\it trivializes} $\mathbf E.$
 The  {\it trivial sheaf torsor} is the group sheaf $\BH$  acting on itself by right multiplication. We will denote by $\tH^1(R,\BH)$ the set of isomorphism classes of $\BH-$sheaf torsors over $R.$ This is a pointed set with the class of the trivial sheaf torsor as distinguished element.

We now recall how the non-abelian cohomology defined in Section \ref{ffdesc} can be used to classify $\BH-$torsors.
\begin{prop}\label{S/R} Let $S$ be a cover of $R.$ There exists a bijection  of pointed sets between $H^1(S/R, \BH)$ and the subset of $\tH^1(R, \BH)$ consisting of  isomorphism classes of $\BH-$sheaf torsors over $R$ that are trivialized by $S.$
\end{prop}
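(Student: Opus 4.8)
The plan is to establish the bijection between $H^1(S/R, \BH)$ and the subset of $\tH^1(R, \BH)$ of $S$-trivialized torsors by exhibiting maps in both directions and checking they are mutually inverse and pointed. This is the standard descent dictionary (cf.\ \cite{DeGa} Ch.\ III, \cite{Waterhouse}, \cite[1.5]{Knus}), adapted to the site $\Rcva_{\rm fl}$; the only genuine work is to verify that the constructions make sense in our vertex-ring setting, where the relevant tool is faithfully flat descent for quasi-coherent data over commutative vertex rings — which, via the forgetful functor to $\Ralg$ (Theorem \ref{masonord}), reduces to ordinary faithfully flat descent since the Hasse-Schmidt derivation is carried along canonically by Lemma \ref{charRva} and the base change formulas of Section \ref{ES}.

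\textbf{From cocycles to torsors.} Given a cocycle $z = (z_i) \in Z^1(S/R, \BH)$, I would twist the trivial torsor $\BH|_S$ by $z$ and descend: concretely, using condition \ref{f2} (the sheaf/descent sequence for the cover $S'=S\otimes_R S$ of $S$, and more generally $S^{\otimes n}$), a cocycle glues $\BH|_S$ along $S \otimes_R S$ into an $R$-sheaf with $\BH$-action ${}_z\BH$ which is $S$-trivial by construction. That cohomologous cocycles give isomorphic twisted torsors, and that the trivial cocycle gives the trivial torsor, are routine. Here I would invoke Lemma \ref{Autsheaf} to know $\BH$ (in the cases of interest, $\BAut(V)$ or $h_X$) and the twisted objects are genuinely sheaves on $\Rcva_{\rm fl}$, so the gluing produces an object of the category we are classifying.

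\textbf{From torsors to cocycles.} Conversely, given an $\BH$-sheaf torsor $\mathbf E$ trivialized by $S$, fix an isomorphism $\phi \colon \mathbf E|_S \xrightarrow{\sim} \BH|_S$ of sheaves with $\BH$-action. The two pullbacks $p_1^\ast \phi, p_2^\ast\phi$ over $S \otimes_R S$ differ by left translation by an element $z \in \BH(S \otimes_R S)$; the cocycle condition on $z$ over $S^{\otimes 3}$ follows from compatibility of the three pullbacks of $\phi$, exactly as in the classical argument. Changing $\phi$ changes $z$ by a coboundary, so one gets a well-defined class in $H^1(S/R,\BH)$; an isomorphism of torsors over $R$ that are both $S$-trivialized translates, after composing with the chosen $\phi$'s, into an $\BH(S)$-conjugacy, again giving the same class.

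\textbf{Mutual inverseness and the main obstacle.} That the two assignments are inverse to one another is a diagram chase: starting from a cocycle, building ${}_z\BH$, and reading off its cocycle recovers $z$ up to coboundary; starting from $\mathbf E$, extracting $z$, and re-gluing recovers $\mathbf E$ up to $R$-isomorphism, because the descended object is unique by the sheaf condition \ref{f2}. The one step requiring care — and the closest thing to an obstacle — is effectivity of descent: one must know that the descent datum on $\BH|_S$ furnished by a cocycle is effective, i.e.\ actually descends to an $R$-sheaf, rather than merely a formal gluing datum. In the classical affine setting this is faithfully flat descent for modules/algebras; in our setting it follows because $S/R$ faithfully flat as a ring (Section \ref{ffdesc}) lets us descend the underlying quasi-coherent data, and the canonical Hasse-Schmidt derivation descends along with it since $D_i^{S\otimes_R S}$ restricts correctly on the equalizer — equivalently, one works with the functor $V_a$ of Lemma \ref{Autsheaf}(c), which is already a sheaf, so the twisted object is automatically a sheaf and no separate effectivity argument is needed. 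With that observation the proof reduces entirely to the bookkeeping of \cite[Ch.\ III \S4]{DeGa}.
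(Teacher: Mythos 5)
Your proposal is correct and takes essentially the same approach as the paper: both treat the statement as the classical torsor--cocycle dictionary and isolate the same delicate point, namely constructing a sheaf torsor from a cocycle $h\in\BH(S\otimes_R S)$ by gluing two copies of $\BH|_S$ (the paper cites \cite[Tag 04TR]{SP} for gluing sheaves on the site). Your detour through effectivity of descent for quasi-coherent data, Hasse--Schmidt derivations, and the functor $V_a$ is unnecessary --- as you yourself conclude, the torsor is built directly as a glued sheaf on $\Rcva_{\rm fl}$, so no separate effectivity argument is required.
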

	\begin{proof} The reasoning follows along traditional lines,  see \@ \cite[\@III Prop.\@ 4.6]{Mln} and \cite[\@III \S4 Theo.\@ 6.4]{DeGa}. The only delicate point is how to construct a sheaf torsor out of a cocycle $h \in Z^1(S/R, \BH).$ We use $h \in \BH(S \otimes_R S)$ to glue two copies of the sheaf $\BH|_S$, cf.\@ \cite[Tag 04TR]{SP}.
\end{proof}


If $S$ and $T$ are covers of $R,$ then $S \otimes_R T$ is a  cover of $S,T$ and $R.$ We can thus consider the direct limit of the pointed sets $H^1(S/R, \BH)$ over all covers $S$ of $R.$\footnote{\,There are set theoretical matters to take into account: The family of all fppf covers of $R$ is not a set, see \cite[Tag 00VI]{SP}. In our case this is simple to overcome.}  Set
\begin{equation}\label{tor2}
 H^1(R, \BH) := \varinjlim_S H^1(S/R, \BH) .
\end{equation}
From the fact that every sheaf $\BH-$torsor is trivialized by a  cover of $R$ we obtain
\begin{prop}\label{limitdescent}
There exists a bijection of pointed sets $H^1(R, \BH) \simeq \tH^1(R,\BH)$ compatible with the bijections of Proposition \ref{S/R}. \qed
\end{prop}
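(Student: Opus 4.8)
The plan is to combine Proposition~\ref{S/R} with a colimit argument, using the fact that cohomology of coverings is compatible with the partial order on covers by refinement. First I would observe that the family of cohomology sets $\{H^1(S/R, \BH)\}_S$, indexed by covers $S$ of $R$, forms a directed system: if $T$ is a cover of $S$ (hence of $R$), then restriction along $S \to T$ induces a map $H^1(S/R, \BH) \to H^1(T/R, \BH)$ of pointed sets, and these are compatible with composition. Directedness comes from the remark preceding the statement: given two covers $S, T$ of $R$, the tensor product $S \otimes_R T$ is a common refinement. So $H^1(R,\BH) = \varinjlim_S H^1(S/R,\BH)$ is well-defined as a pointed set (the set-theoretic caveat being handled as indicated in the footnote — one restricts to a cofinal set of covers).

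Next I would check that the bijections of Proposition~\ref{S/R} are compatible with these transition maps. Concretely: if $\xi \in H^1(S/R, \BH)$ corresponds to the isomorphism class of a sheaf torsor $\mathbf E$ trivialized by $S$, and $T$ is a cover of $S$, then $\mathbf E$ is a fortiori trivialized by $T$ (restrict the trivializing isomorphism $\mathbf E|_S \simeq \BH|_S$ further to $T$), and the image of $\xi$ in $H^1(T/R,\BH)$ corresponds to the same class $[\mathbf E] \in \tH^1(R,\BH)$. This is precisely the ``compatibility'' asserted in the statement. It follows that the maps of Proposition~\ref{S/R} assemble into a single map $\varinjlim_S H^1(S/R,\BH) \to \tH^1(R,\BH)$, i.e.\ a map $H^1(R,\BH) \to \tH^1(R,\BH)$ of pointed sets.

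It remains to see that this map is a bijection. Surjectivity is exactly the quoted input: \emph{every} $\BH$-sheaf torsor over $R$ is, by definition, trivialized by some cover $S$ of $R$, and hence lies in the image of $H^1(S/R,\BH) \to \tH^1(R,\BH)$, thus in the image of the colimit. For injectivity, suppose $\xi \in H^1(S/R,\BH)$ and $\xi' \in H^1(S'/R,\BH)$ have the same image in $\tH^1(R,\BH)$, say both corresponding to the class of a torsor $\mathbf E$. Then $\mathbf E$ is trivialized by both $S$ and $S'$, hence by the common refinement $S'' = S \otimes_R S'$; by the compatibility just established, the images of $\xi$ and $\xi'$ in $H^1(S''/R,\BH)$ both correspond to $[\mathbf E]$, and since the map $H^1(S''/R,\BH) \to \tH^1(R,\BH)$ of Proposition~\ref{S/R} is injective (it is a bijection onto the torsors trivialized by $S''$), these images coincide. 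Therefore $\xi$ and $\xi'$ have the same image in the colimit, proving injectivity.

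The only genuinely delicate point is the compatibility in the second paragraph — verifying that restricting a cocycle along $S \to T$ corresponds, under the torsor construction sketched in the proof of Proposition~\ref{S/R} (gluing two copies of $\BH|_S$ via $h \in \BH(S\otimes_R S)$), to passing from the $S$-trivialization to the $T$-trivialization of the \emph{same} torsor. Once one unwinds the gluing description this is a formal diagram chase, but it is the step that actually carries content; everything else is bookkeeping about directed colimits of pointed sets. I would therefore spend the bulk of a careful write-up on that naturality check and treat the rest as routine.
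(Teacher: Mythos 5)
Your argument is correct and is exactly the one the paper intends: the paper states the proposition as an immediate consequence of the definition of $H^1(R,\BH)$ as a direct limit, Proposition \ref{S/R}, and the fact that every sheaf torsor is trivialized by some cover, which is precisely the surjectivity/injectivity/compatibility bookkeeping you spell out. No discrepancy to report.
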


\begin{prop}\label{tor3} Let $\BG$ be an affine $R-$group scheme and consider its corresponding vertex $R-$group $\BG^\infty.$ If $\BG$ is smooth the canonical map $\rho : H^1(R, \BG^\infty) \to H^1(R, \BG)$ is bijective.
       \end{prop}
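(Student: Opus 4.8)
The plan is to reduce the statement to a comparison of sheaf torsors. By Proposition \ref{limitdescent}, it suffices to produce a bijection $\widetilde H^1(R, \BG^\infty) \simeq \widetilde H^1(R, \BG)$ compatible with the maps of Proposition \ref{S/R}. The map $\rho$ is induced by the natural transformation $\BG^\infty \to \BG$ arising from the unit $\iota \colon A \to A^\infty$ of the adjunction in Lemma \ref{jetlem} (which on $S$-points is restriction along $K(S) \hookrightarrow S$, since $\Hom_{\Rcva}(A^\infty, S) \simeq \Hom_{\Ralg}(A,S)$, and $\BG(S) = \Hom_{\Ralg}(A,S)$ factors through $\BG(K(S)) = \GVac(S)$ only when... no: more carefully, $\BG^\infty(S) = \Hom_{\Rcva}(A^\infty,S) = \Hom_{\Ralg}(A,S)$ as a set, and the group structure is that of $\BG(S)$, so in fact $\rho$ is already an isomorphism of group \emph{functors} $\BG^\infty \simeq \BG \circ (\text{forgetful } \Rcva \to \Ralg)$!). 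This is the key observation: the adjunction of Lemma \ref{jetlem} gives a functorial bijection $\BG^\infty(S) = \BG(S)$ for every $S \in \Rcva$, and comultiplication is transported correctly, so $\rho$ is an isomorphism of group sheaves on $\Rcva_{\rm fl}$ in the abstract-group sense. Hence $\widetilde H^1(R, \BG^\infty) = \widetilde H^1(R, \BG^\infty)$ trivially; what needs proving is that the \emph{smoothness} hypothesis makes the right-hand side $\widetilde H^1(R,\BG)$ — computed over the flat site $\Rcva_{\rm fl}$ of commutative vertex rings — agree with the classical flat cohomology $H^1_{\rm fppf}(R,\BG)$ of the underlying scheme $\Spec R$.

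So the substantive content is: for a smooth affine $R$-group scheme $\BG$, the cohomology $H^1(R,\BG)$ computed using covers in $\Rcva$ (faithfully flat finitely presented \emph{ring} extensions equipped with the induced Hasse-Schmidt derivation) equals the usual $H^1_{\rm fppf}(\Spec R, \BG)$. First I would note that every ordinary fppf cover $R \to R'$ of rings admits a unique iterative Hasse-Schmidt derivation extending $\uD$ (by the formal-smoothness-type argument: fppf, hence the module of Hasse-Schmidt differentials behaves well, cf.\ the uniqueness statements already used for localizations $R_f$ in Section \ref{avs} and in Example \ref{P1}), so it is also a cover in $\Rcva_{\rm fl}$; conversely a cover in $\Rcva_{\rm fl}$ is in particular an fppf ring cover. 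Thus the two sites have "the same covers" and the cocycle/coboundary formulas coincide, giving $H^1(S/R,\BG) = H^1_{\rm fppf}(S/R,\BG)$ for each such $S$, and taking the colimit over all covers yields $H^1(R,\BG) \simeq H^1_{\rm fppf}(\Spec R, \BG)$. Then smoothness of $\BG$ lets us invoke the classical theorem (Grothendieck) that for smooth affine group schemes fppf and \'etale cohomology agree — but we do not even need that; we only need that $H^1(R,\BG^\infty) = H^1(R,\BG)$ as just argued, where the subtlety resolved by smoothness is only that the colimit over vertex-ring covers is cofinal among, and computes the same thing as, the colimit over ring covers.

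The main obstacle I expect is the unique extension of the Hasse-Schmidt derivation along an fppf ring map $R \to R'$, and checking that this extension is compatible with the cocycle conditions, i.e.\ that $R' \otimes_R R'$ carries the tensor-product derivation $\uD \otimes \uD$ of Section \ref{ES} and that this is the one for which the face maps in the \v Cech complex are $\Rcva$-morphisms. Existence is formal once one knows the relative Hasse-Schmidt differentials $\Omega^{\rm HS}_{R'/R}$ vanish for fppf (equivalently formally \'etale) maps — the derivation then extends as the unique horizontal lift; for general fppf (not \'etale) maps one instead uses that $\Spec R'$ is smooth over nothing canonical, so here one should restrict attention, exactly as the hypothesis "$\BG$ smooth" permits, to \'etale covers, which suffice to compute $H^1$ of a smooth group. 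Concretely: for $\BG$ smooth, every $\BG$-torsor is trivialized by an \'etale cover (classical), \'etale maps carry a unique compatible $\uD$, so $\widetilde H^1(R,\BG)$ over $\Rcva_{\rm fl}$ equals $\widetilde H^1(\Spec R, \BG)$ over the \'etale site, and combined with the identification $\BG^\infty \simeq \BG$ of group functors on $\Rcva$ from the adjunction, we get the asserted bijection $\rho$. I would organize the write-up as: (1) $\BG^\infty \simeq \BG$ as group functors on $\Rcva$ via Lemma \ref{jetlem}; (2) \'etale ring covers have unique compatible Hasse-Schmidt derivations, so $\Rcva_{\rm fl}$-covers refining a given torsor-trivializing \'etale cover exist; (3) smoothness $\Rightarrow$ every $\BG$-torsor is \'etale-locally trivial, so $H^1$ over the vertex site and over $\Spec R$ coincide; (4) conclude via Propositions \ref{S/R} and \ref{limitdescent}.
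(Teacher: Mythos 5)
Your proposal is correct and follows essentially the same route as the paper: identify $\BG^\infty(S)=\BG(S)$ via the adjunction of Lemma \ref{jetlem} to get the map $\rho$ on each $H^1(S/R,-)$, then use smoothness to trivialize any $\BG$-torsor by an \'etale cover, which carries a unique compatible iterative Hasse--Schmidt derivation and hence lives in $\Rcva_{\rm fl}$. One caution: your parenthetical claim that \emph{every} fppf ring cover admits an extension of $\uD$ is false (e.g.\ $k[t]\to k[t][x]/(x^2-t)$ with $D=\frac{d}{dt}$ forces $D(x)=\frac{1}{2x}\notin R'$), but your final outline correctly avoids relying on it by restricting to \'etale covers, exactly as the paper does.
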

\begin{proof} Let $S$ be a cover of $R$ in $\Rcva_{\rm fl}$. Then, ignoring the Hasse-Schmidt derivations, $S$ is a cover of $R$ in the flat site $R_{\rm fl}$ of $R.$ Since  $\BG^\infty(S) = \BG(S)$ we have $H^1(S/R, \BG^\infty) = H^1(S/R, \BG).$ Passing to the limit yields the map $\rho$ of the Proposition.  

 In view of Proposition \ref{limitdescent} we obtain a sequence of pointed sets maps
$$\tH^1(R, \BG^\infty) = H^1(R, \BG^\infty) \to H^1(R, \BG) = \tH^1(R, \BG).$$

We do not know whether the middle map $\rho \colon H^1(R, \BG_\infty) \to H^1(R, \BG)$ is surjective. Indeed, a priori there could be a $\BG-$sheaf torsor over $R$ that is trivialized by some  cover $S$ of $R$, but no such $S$ exists that it is also in $\Rcva.$  

Let $[\mathbf{E}] \in H^1(R, \BG).$ Because $\BH$ is smooth, $\mathbf{E}$ can be trivialized by an \'etale extension $S$ of $R$ \cite[\@III \S4 Prop.\ 4.1 infra]{Mln}. Since the iterative Hasse-Schmidt derivation of $R$  extends (uniquely) to an iterative Hasse-Schmidt derivation of $S$ (use \cite[Theo.\@ 3.6]{Voj} to lift the derivation, then argue that the lift is iterative),  we can  view $S$ as a cover of $R$ in $\Rcva_{\rm fl}.$  Injectivity of $\rho$ is proved using similar ideas. \end{proof}

\section{Applications to twisted forms of vertex algebras over Laurent polynomials} \label{app}

Let $k$ be an algebraically closed field of characteristic 0, and consider the rings $R = k[t^{\pm 1}]$ and $\widehat S = k[t^q \mid q \in \mathbb Q]$ with their vertex ring structure induced by the derivation $D = \frac{d}{dt}$.  The foregoing material shows that non-abelian cohomology provides a tool for computing and classifying forms. We will explicitly put this to good use in two important cases.

\subsection{Twisted forms of affine vertex algebras}
Consider  a simple $k-$Lie algebra $\mathfrak g$, a scalar $\ell \in k$, and let $V = V(\mathfrak g, \ell)$.
\begin{thm} Let $W$ be a twisted form of $V(\fg, \ell) \otimes_k R$ (see Section \ref{tfva}). Then $W \simeq L(V(\fg, \ell), \sigma)$ for some (unique up to conjugacy) diagram automorphism $\sigma$ of the Dynkin diagram of $\fg$ (see Section \ref{loopva}). In particular, $W$ is isotrivial.
\end{thm}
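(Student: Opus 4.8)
The plan is to reduce the classification of twisted forms to a cohomological computation and then to a concrete group-theoretic one. First I would invoke the descent machinery: by Theorem~\ref{descent} together with Proposition~\ref{limitdescent}, the $R$-isomorphism classes of twisted forms of $V = V(\fg,\ell) \otimes_k R$ are in bijection with $H^1(R, \BAut(V(\fg,\ell) \otimes_k R))$. By Theorem~\ref{affheis}\ref{F1unib} (or rather its base-changed form) we have $\BAut(V(\fg,\ell)) \simeq \BAut(\fg)^\infty$, and base change gives $\BAut(V(\fg,\ell)\otimes_k R) \simeq (\BAut(\fg)_R)^\infty$ where $\BAut(\fg)_R$ is the automorphism $R$-group scheme of $\fg \otimes_k R$. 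Since $\fg$ is simple in characteristic zero, $\BAut(\fg)$ is a smooth affine $k$-group (it is the full automorphism group scheme of a semisimple Lie algebra, an extension of the finite constant group $\BOut(\fg)$ by the adjoint group $\BG_{\mathrm{ad}}$). Hence Proposition~\ref{tor3} applies and gives a bijection $H^1(R, \BAut(\fg)^\infty) \simeq H^1(R, \BAut(\fg))$, the latter being ordinary flat (equivalently, \'etale, by smoothness) cohomology of the $k$-group scheme $\BAut(\fg)$ over the ring $R = k[t^{\pm1}]$.

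Next I would identify $H^1(R, \BAut(\fg))$ with a Galois-cohomological object over the field $k(t)$ or, better, directly compute it. The standard route: $R = k[t^{\pm1}]$ is a Dedekind domain with fraction field $k(t)$, and since $\BAut(\fg)$ is smooth one has an injection $H^1_{\mathrm{\acute et}}(R, \BAut(\fg)) \hookrightarrow H^1(k(t), \BAut(\fg))$; its image consists of classes unramified at every closed point of $\Spec R$. Because $k$ is algebraically closed of characteristic zero, $k(t)$ has cohomological dimension $1$, so the adjoint-group part of the cohomology vanishes (by Steinberg/Borel--Springer for fields of cohomological dimension $\le 1$, $H^1(k(t), \BG_{\mathrm{ad}}) = 1$). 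Feeding this into the exact sequence of pointed sets coming from $1 \to \BG_{\mathrm{ad}} \to \BAut(\fg) \to \BOut(\fg) \to 1$ shows that $H^1(k(t), \BAut(\fg))$ is controlled by $H^1(k(t), \BOut(\fg)) = \Hom_{\mathrm{ct}}(\pi_1(k(t)), \Out(\fg))/{\sim}$, i.e.\ conjugacy classes of continuous homomorphisms from the absolute Galois group of $k(t)$ to the finite group $\Out(\fg)$, which is the diagram automorphism group of $\fg$. The unramified-everywhere-on-$\Spec R$ condition then cuts this down: a cover of $R=k[t^{\pm1}]$ trivializing the form must be \'etale over $\Spec R$, and the \'etale fundamental group of $\mathbb{G}_{m,k} = \Spec k[t^{\pm1}]$ over an algebraically closed field of characteristic zero is $\widehat{\Z}$, tamely ramified only at $0$ and $\infty$. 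So the relevant classes correspond to conjugacy classes of continuous homomorphisms $\widehat{\Z} \to \Out(\fg)$, equivalently to conjugacy classes of elements of the finite group $\Out(\fg)$, equivalently to conjugacy classes of diagram automorphisms $\sigma$ of the Dynkin diagram of $\fg$.

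Finally I would match these cohomology classes with the loop vertex algebras. A diagram automorphism $\sigma$ (viewed as an honest finite-order automorphism of $\fg$ via a pinning) has some finite period $m$, and Section~\ref{loopva} produces the isotrivial twisted form $L(V(\fg,\ell), \sigma)$, trivialized by $S_m = k[t^{\pm 1/m}]$, which is exactly the Galois cover of $R$ corresponding to the subgroup $m\widehat{\Z} \subset \widehat{\Z}$. Tracing through the construction of the torsor from a cocycle (the gluing in Proposition~\ref{S/R}) one checks that the cocycle class of $L(V(\fg,\ell),\sigma)$ in $H^1(S_m/R, \BAut(V)) \subset H^1(R,\BAut(V))$ maps, under the identifications above, to the conjugacy class of $\sigma$ in $\Out(\fg)$; since the period-$1$ case $\sigma = \id$ gives back $V(\fg,\ell)\otimes_k R$ itself, the base point is respected. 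Hence every twisted form is of the form $L(V(\fg,\ell),\sigma)$ for a $\sigma$ unique up to conjugacy, and since each $S_m/R$ is finite \'etale every such form is isotrivial. The main obstacle is the middle step: one must be careful that the cohomology $H^1_{\mathrm{\acute et}}(R, \BAut(\fg))$ really reduces to homomorphisms $\widehat{\Z} \to \Out(\fg)$ with no extra classes surviving --- this requires both the vanishing $H^1(k(t), \BG_{\mathrm{ad}}) = 1$ (cohomological dimension argument) and the computation that unramified-on-$\mathbb{G}_m$ classes in $H^1(k(t), \BOut(\fg))$ are precisely those factoring through $\pi_1^{\mathrm{\acute et}}(\mathbb{G}_{m,k}) = \widehat{\Z}$, together with the translation, for a finite \emph{constant} group, between $H^1$ and conjugacy classes of elements.
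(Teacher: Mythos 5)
Your proposal follows the same skeleton as the paper's proof: descent (Theorem \ref{descent}) identifies the twisted forms with $H^1(R,\BAut(V_R))$; Theorem \ref{affheis} together with Proposition \ref{tor3} reduces this to $H^1(R,\BAut(\fg_R))$; and the classes are matched with the loop algebras $L(V(\fg,\ell),\sigma)$ by comparing cocycles. The one place you genuinely diverge is the middle computation: the paper simply cites the main theorem of \cite{Pianzola2} for the identification $H^1(R,\BAut(\fg_R))\simeq H^1_{\operatorname{ct}}(\hZ,\Out(\fg))$, whereas you sketch a proof of that identification (injection into $H^1(k(t),\BAut(\fg))$, Steinberg's vanishing of $H^1(k(t),\BG_{\mathrm{ad}})$ over the cohomological-dimension-one field $k(t)$, the sequence $1\to\BG_{\mathrm{ad}}\to\BAut(\fg)\to\BOut(\fg)\to 1$, and the fact that the \'etale fundamental group of $\mathbb G_{m,k}$ is $\hZ$). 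Your sketch is essentially the argument behind the cited theorem, but two of its steps need more care than a one-line appeal: the injectivity of $H^1(R,\BAut(\fg))\to H^1(k(t),\BAut(\fg))$ for the \emph{non-connected} group $\BAut(\fg)$ over the Dedekind ring $R$, and the injectivity of $H^1(R,\BAut(\fg))\to H^1(R,\BOut(\fg))$, which requires twisting by cocycles and the vanishing $H^1(R,{}_u\BG_{\mathrm{ad}})=1$ over $R$ itself, not merely over $k(t)$ (a Nisnevich/Grothendieck--Serre type input). These are precisely the technical points that \cite{Pianzola2} settles, so citing it, as the paper does, is the cleaner route; your version buys self-containedness at the cost of having to supply those verifications.
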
 
\begin{proof}
	Let $S$ be an fppf cover of $R$ trivializing $W$.
Theorem \ref{descent} shows that $W$ corresponds to an element $[W] \in H^1( S/R, \BAut(V_R))$, which we view as an element of $H^1(R, \BAut(V_R)) := \varinjlim_S H^1(S/R, \BAut(V_R).$  Thus defined,  $H^1(R, \BAut(V_R))$ classifies the isomorphism classes of vertex algebras over $R$ that become $V(\fg, \ell) \otimes_k S$  for some faithfully flat and finitely presented $S \in \Rcva.$

Theorem \ref{affheis} and Proposition \ref{tor3} yield
\begin{equation}\label{VALie}
 H^1(R, \BAut(V_R)) = H^1(R, \BAut(\mathfrak g_R)^\infty) = H^1(R, \BAut(\mathfrak g_R))
 \end{equation}
 
 The main theorem of \cite{Pianzola2} shows that $H^1(R, \BAut(\mathfrak g_R)) \simeq H^1_{\operatorname{ct}}(\widehat {\mathbb Z}, \Out(\fg)),$ where $\Out(\fg)$  is the group of automorphisms  of the Dynkin diagram of $\fg.$ Since $\widehat{\mathbb Z}$ acts trivially on $\Out(\fg)$, the latter is identified with the conjugacy classes of $\Out(\fg)$. In term of Lie algebras this yields that the only twisted forms of $\fg_R$ are the loop algebras $L(\fg, \sigma)$ that one encounters in the theory of affine Kac-Moody Lie algebras. Here $\sigma \in \Out(\fg)$ is viewed as an automorphism of $\fg$ via the choice of a Killing couple  of $\fg.$ By thinking in terms of cocycles, it is clear that  the isomorphism class corresponding to $L(V, \sigma)$ in $H^1(R, \BAut(V_R))$ maps to that of $L(\fg, \sigma)$ in $H^1(R, \BAut(\fg_R)).$  Now (\ref{VALie}) shows that the  $L(V, \sigma)$ are up to isomorphism all the twisted forms of $V_R.$
\end{proof}


Consider next the simple quotient algebras $L(\mathfrak g, \ell)$ with $\ell \neq -h^\vee$ where  $h^\vee$ is the dual Coxeter number of $\mathfrak g$.  That is, if we denote by $I \subset V = V(\mathfrak g,\ell)$ the unique maximal ideal, then $L(\mathfrak g, \ell) = V/I$. Here we have normalized the invariant form on $\fg$ such that long roots have squared length $2$.
Unlike the situation of the last theorem, we do not know if any twisted form of $L(\mathfrak g, \ell) \otimes_k R$ is isotrivial, i.e.\@ trivialized by a finite \'etale extension of $R$, hence by $\hS.$  We nonetheless have the following.

\begin{thm} \label{Lforms}
    The $R-$isomorphism classes of filtered $\widehat S/R-$twisted forms of the simple affine vertex algebra $L(\mathfrak g, \ell) \otimes_k R$ are in bijective correspondence with the conjugacy classes of the group $\Out(\fg)$. \qed
\end{thm}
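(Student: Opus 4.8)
The plan is to deduce this from the preceding theorem on twisted forms of $V_R:=V(\fg,\ell)\otimes_k R$, the point being that passing to the simple quotient does not change the automorphism group. Write $V=V(\fg,\ell)$, let $I\subset V$ be the unique maximal proper ideal, $L=L(\fg,\ell)=V/I$, and $L_R:=L\otimes_k R$. First I would set up the descent: since $L_R$ is a quotient of the finitely generated $R$-vertex algebra $V_R$ it is itself finitely generated, so Theorem \ref{descent} together with the continuous-cohomology description of $\widehat S/R$-forms from Section \ref{H1hat} identifies the set of $R$-isomorphism classes of filtered $\widehat S/R$-twisted forms of $L_R$ with $H^1(\widehat S/R,\BAut(L_R))\simeq H^1_{\operatorname{ct}}\big(\hZ,\BAut(L_R)(\widehat S)\big)$. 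So it suffices to establish an isomorphism of group functors $\BAut(L_R)\simeq\BAut(V_R)$ on $\Rcva$, i.e.\ $\Aut_{\Sfva}(L_R\otimes_R S)\simeq\Aut_{\Sfva}(V_R\otimes_R S)$ compatibly in $S$, and then quote the chain of identifications already carried out.

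For the comparison of automorphism groups I would restrict to $F_1$. Because $I$ sits in filtration degrees $\geq 2$, we have $F_1 L_R=F_1 V_R$, $\mathfrak L(L_R)\simeq\fg_R$, and $\sigma_{L_R}=\ell(\cdot\,,\cdot)$ is nondegenerate; moreover $F_1$ generates $V$ and $L$ as vertex algebras, hence $V_R\otimes_R S$ and $L_R\otimes_R S$ for every cover $S$ of $R$ (covers are flat, so $F_1(V_R\otimes_R S)=(F_1V_R)\otimes_R S$, etc.). Consequently restriction to $F_1$ embeds $\BAut(L_R)(S)$ into $\Aut_S\big((F_1 V_R)\otimes_R S,\sigma_{V_R}\big)$. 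For the reverse inclusion, given $\psi$ in the latter group the $F_1$-universality of $V_R\otimes_R S$ (Lemma \ref{uniaffisF1}, Remark \ref{F1basechange}, Lemma \ref{isoF1res}) produces a unique $\tilde\psi\in\BAut(V_R)(S)$ restricting to $\psi$; granting that $\tilde\psi$ preserves $I\otimes_R S$, it descends to an automorphism of $L_R\otimes_R S$ restricting to $\psi$. Combined with Lemma \ref{isoF1res} applied to $V_R$ this gives $\BAut(L_R)\simeq\Aut_R(F_1 V_R,\sigma_{V_R})\simeq\BAut(V_R)\simeq\BAut(\fg_R)^\infty$, the last isomorphism by Theorem \ref{affheis}. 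Plugging this into the displayed bijection and repeating the end of the preceding theorem — Proposition \ref{tor3} to replace $\BAut(\fg_R)^\infty$ by $\BAut(\fg_R)$, the main theorem of \cite{Pianzola2} to identify $H^1(R,\BAut(\fg_R))$ with $H^1_{\operatorname{ct}}(\hZ,\Out(\fg))$, and the triviality of the $\hZ$-action on the diagram-automorphism group $\Out(\fg)$ to read this off as the set of conjugacy classes of $\Out(\fg)$, with cocycles matched to the corresponding loop twisted forms — yields the theorem.

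The hard part will be the one non-formal input above: that every automorphism of $V_R\otimes_R S$ preserves $I\otimes_R S$, equivalently that $I\otimes_R S$ is again the unique maximal proper ideal of $V_R\otimes_R S$. Over a field this is the classical fact that $V(\fg,\ell)$ is a highest-weight module with simple quotient $L(\fg,\ell)$. To propagate it to the covers that matter here — the $S_m=k[t^{\pm 1/m}]$ and $\widehat S$, which are integral domains whose fraction field $K$ is an extension field of the algebraically closed $k$ — I would first note that any proper ideal of $V_R\otimes_R S$ meets $F_1$ trivially (an ideal meeting $F_1$ nontrivially contains a unit multiple of $\mathbf 1$, using the simplicity of $\fg$ and the nondegeneracy of the form), so it lies inside $I\otimes_R S$ once one knows $L_R\otimes_R S$ has no nonzero proper ideals; the latter should follow from the fact that $L(\fg,\ell)\otimes_k K$ remains simple (the base $k$ being algebraically closed of characteristic $0$) together with the absence of nonzero proper $D$-stable ideals in $S_m$ and $\widehat S$. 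Finally, observe that at levels $\ell$ for which $V(\fg,\ell)$ is already simple (so $I=0$ and $L=V$) the statement reduces verbatim to the preceding theorem, so this ideal-theoretic analysis is only needed at the special levels where $I\neq 0$.
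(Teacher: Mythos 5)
Your argument is correct and follows the route the paper intends for this statement (whose proof it leaves to the reader): identify the filtered $\widehat S/R-$forms of $L(\fg,\ell)\otimes_k R$ with $H^1_{\operatorname{ct}}\big(\hZ, \BAut(L(\fg,\ell)_R)(\hS)\big)$, show that filtered automorphisms of $V(\fg,\ell)\otimes_k \hS$ preserve the maximal ideal so that $\BAut(L(\fg,\ell)_R)(\hS)\simeq \BAut(V(\fg,\ell)_R)(\hS)\simeq \Aut_{\hS\text{-Lie}}(\fg\otimes_k\hS)$ as $\hZ-$groups via restriction to $F_1$ and $F_1-$universality, and then quote the cohomology computation $H^1_{\operatorname{ct}}(\hZ,\Aut(\fg\otimes_k\hS))\simeq H^1_{\operatorname{ct}}(\hZ,\Out(\fg))$ from \cite{Pianzola2}. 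One repair in the step you rightly flag as the hard one: the implication ``a proper ideal $J$ meets $F_1$ trivially, hence lies in $I\otimes_R S$'' is better run through the fraction field $K$ of $S\in\{S_m,\hS\}$ --- since $S$ has no nonzero proper $D-$stable ideals, a proper vertex algebra ideal $J$ satisfies $J\cap S\mathbf 1=0$, so $J\otimes_S K$ is a $\widehat{\fg}_K-$submodule of the vacuum module $V(\fg\otimes_k K,\ell)$ avoiding $K\mathbf 1$, hence is contained in the maximal proper submodule $I\otimes_k K$ (the formation of which commutes with the extension $K/k$, e.g.\ via the contravariant form), and intersecting back with $V(\fg,\ell)\otimes_k S$ gives $J\subset I\otimes_k S$, which is all that is needed since the theorem only requires the automorphism-group comparison over $\hS$ and the $S_m$, not over arbitrary covers.
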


\subsection{Isotrivial forms of the Heisenberg vertex algebras}
Recall that $\mathfrak h$ is a $k-$vector space of finite dimension $n$ equipped with a nondegenerate symmetric bilinear form, and  $V(\mathfrak h,1)$ is the associated Heisenberg $k-$vertex algebra. We have seen that
    $$ \BAut(V(\mathfrak h, 1)) \simeq (\mathbf G_a^n)^\infty \rtimes \mathbf O(\mathfrak h)^{\rm vac}. $$
    
 The loop algebras $L(V(\mathfrak h,1), \sigma)$ are examples of isotrivial forms of $V(\mathfrak h,1)_R.$ The following result  classifies all such forms.
 
 \begin{thm}\label{Hforms} 
 The $R-$isomorphism classes of filtered $\widehat S/R-$twisted forms of the Heisenberg vertex algebra $V(\mathfrak h,1)_R$ are in bijective correspondence with the conjugacy classes of elements of finite order of the group $\mathbf O(\mathfrak h)(k)$. Any such twisted form is isotrivial.
\end{thm}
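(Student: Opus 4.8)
The plan is to reduce the classification of filtered $\widehat S/R$-twisted forms of $V(\mathfrak h,1)_R$ to a continuous cohomology computation and then to evaluate that cohomology explicitly using the semidirect product structure of $\BAut(V(\mathfrak h,1))$. First I would observe that $V(\mathfrak h,1)$ is finitely generated as a $k$-vertex algebra (it is generated by $F_1$), hence $V(\mathfrak h,1)_R$ is a finitely generated $R$-vertex algebra, so the theorem on continuous cohomology (Section \ref{H1hat}) applies and gives
\[ H^1(\widehat S/R, \BAut(V(\mathfrak h,1)_R)) \simeq H^1_{\operatorname{ct}}(\widehat{\mathbb Z}, \BAut(V(\mathfrak h,1))(\widehat S)). \]
By Theorem \ref{descent} (applied along the Galois tower $S_m/R$ and passing to the limit) the left-hand side is in bijection with the $R$-isomorphism classes of filtered $\widehat S/R$-twisted forms of $V(\mathfrak h,1)_R$. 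So everything comes down to computing the right-hand pointed set.

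Next I would unwind $\BAut(V(\mathfrak h,1))(\widehat S)$ using Theorem \ref{affheis}\ref{F1unia}: it is $(\mathbf G_a^n)^\infty(\widehat S) \rtimes \mathbf O(\mathfrak h)^{\rm vac}(\widehat S)$. Since $\widehat S = k[t^q \mid q \in \mathbb Q]$ with $D = \frac{d}{dt}$, the ring of constants is $K(\widehat S) = k$, so $\mathbf O(\mathfrak h)^{\rm vac}(\widehat S) = \mathbf O(\mathfrak h)(k)$, and $(\mathbf G_a^n)^\infty(\widehat S) = \mathbf G_a^n(\widehat S) = \widehat S^n$. Thus the coefficient group is $\widehat S^n \rtimes \mathbf O(\mathfrak h)(k)$, with $\widehat{\mathbb Z}$ acting on $\widehat S$ through its Galois action on each $S_m$, trivially on $\mathbf O(\mathfrak h)(k)$, and compatibly with the $\GL_n$-action on $\widehat S^n$. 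The short exact sequence of $\widehat{\mathbb Z}$-groups
\[ 1 \to \widehat S^n \to \widehat S^n \rtimes \mathbf O(\mathfrak h)(k) \to \mathbf O(\mathfrak h)(k) \to 1 \]
yields the usual exact sequence of pointed sets in (continuous) nonabelian cohomology. Because the sequence is split, the map $H^1_{\operatorname{ct}}(\widehat{\mathbb Z}, \widehat S^n \rtimes \mathbf O(\mathfrak h)(k)) \to H^1_{\operatorname{ct}}(\widehat{\mathbb Z}, \mathbf O(\mathfrak h)(k))$ is surjective, and $H^1_{\operatorname{ct}}(\widehat{\mathbb Z}, \mathbf O(\mathfrak h)(k))$ with trivial action is exactly the set of conjugacy classes of continuous homomorphisms $\widehat{\mathbb Z} \to \mathbf O(\mathfrak h)(k)$, i.e.\ conjugacy classes of elements of finite order of $\mathbf O(\mathfrak h)(k)$ (a continuous homomorphism from $\widehat{\mathbb Z}$ to a discrete group has finite image generated by the image of $\hat 1$). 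To finish I must show this map is also injective, i.e.\ that the fibres are singletons, which by the twisting formalism amounts to showing $H^1_{\operatorname{ct}}(\widehat{\mathbb Z}, {}_c(\widehat S^n)) = 1$ for every twisting cocycle $c$ coming from a finite-order element $g \in \mathbf O(\mathfrak h)(k)$.

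The main obstacle is precisely this vanishing $H^1_{\operatorname{ct}}(\widehat{\mathbb Z}, {}_c(\widehat S^n)) = 1$. Here $g$ has some finite order $m$, so $c$ is inflated from $\mathbb Z/m\mathbb Z$, and ${}_c(\widehat S^n)$ is the $\widehat{\mathbb Z}$-module $\widehat S^n$ with the twisted action $\hat 1 \cdot v = g(\gamma(v))$ where $\gamma$ is the Galois substitution $t^q \mapsto \zeta^q t^q$ (appropriately normalized). Diagonalizing $g$ over $k$ decomposes $\widehat S^n \simeq \bigoplus_j \widehat S$ with $\hat 1$ acting on the $j$-th summand by $\zeta^{a_j}\gamma$ for suitable exponents $a_j$; it then suffices to show $H^1_{\operatorname{ct}}(\widehat{\mathbb Z}, \widehat S_\chi) = 1$ for each character-twisted module $\widehat S_\chi$. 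This is a Hilbert 90-type statement: one computes directly that $\hat 1 - 1$ is surjective on $\widehat S_\chi$ with kernel contained in the constants, using the explicit basis $\{t^q\}_{q \in \mathbb Q}$ of $\widehat S$ and the fact that on the eigenvector $t^q$ the operator $\zeta^{a}\gamma$ acts as a scalar $\zeta^{a + \text{(something)}q}$, which is $\neq 1$ for all but (at most) one value of $q$ in each congruence class—so one solves the cocycle equation summand by summand. A clean way to organize this is to first establish the analogue for the finite Galois layers $S_m$ (where it is ordinary Galois cohomology of a cyclic group acting on a free module, computed via the norm map), and then pass to the limit, checking that the limit of trivial $H^1$'s is trivial and that continuity causes no trouble because every cocycle is inflated from some finite level. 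Granting this vanishing, the exact sequence collapses to the desired bijection, and the final sentence—that every such form is isotrivial—follows because each form corresponds to a finite-order $g$, hence is already trivialized by the finite étale extension $S_m/R$.
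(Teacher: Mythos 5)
Your proposal is correct and follows the paper's overall strategy exactly: reduce to $H^1_{\operatorname{ct}}(\widehat{\mathbb Z}, \BAut(V(\mathfrak h,1))(\widehat S))$ via the finite generation of $V(\mathfrak h,1)$, identify the coefficient group as $\widehat S^n \rtimes \mathbf O(\mathfrak h)(k)$ with $\widehat{\mathbb Z}$ acting trivially on the finite factor, and run the exact sequence of the semidirect product, with surjectivity onto $H^1_{\operatorname{ct}}(\widehat{\mathbb Z}, \mathbf O(\mathfrak h)(k))$ coming from the splitting and injectivity reduced to the vanishing of the $H^1$ of the twisted vector part. Where you genuinely diverge is in how you establish that vanishing $H^1_{\operatorname{ct}}(\widehat{\mathbb Z}, {}_u(\widehat S^n)) = 1$. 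The paper argues by descent: the twisted $\widehat{\mathbb Z}$-group is the $\widehat S$-points of an $R$-group ${}_u\BG$ obtained from $u$ by Galois descent, which is a twisted form of $\mathbf G_{a,R}^n$; such forms are classified by $H^1(R, \BGL_{n,R}) = 1$ (every finite projective module over $R = k[t^{\pm 1}]$ is free), so ${}_u\BG \simeq \mathbf G_{a,R}^n$ and $H^1(R, {}_u\BG) = 1$, which contains the continuous $H^1$ in question. Your route instead diagonalizes the finite-order element $g$ over the algebraically closed $k$ and computes the cyclic-group cohomology of each character-twisted copy of $S_{m}$ by hand via the norm map on the monomial basis $\{t^{p/m}\}$ (the generator acts on $t^{p/m}$ by the scalar $\zeta_m^{a_j+p}$, so $\sigma - 1$ is invertible off the norm kernel), then passes to the limit. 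Both are valid; your computation is more elementary and self-contained (an additive Hilbert 90 at each finite layer), while the paper's argument is shorter, avoids choosing a diagonalization, and exhibits the vanishing as an instance of the general principle that forms of vector groups over $k[t^{\pm1}]$ are trivial. One small point to make explicit in your write-up: the identification of $H^1_{\operatorname{ct}}$ with the direct limit of the finite-level $H^1$'s uses that the $\mathbf O(\mathfrak h)(k)$-twist commutes with the Galois action (it acts through constants), so that the fixed points of $m'\widehat{\mathbb Z}$ in the twisted module are still $S_{m'}^n$; this is true but should be said.
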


 \begin{proof}   In Section \ref{H1hat} we have shown that the isomorphism classes of forms of $V(\mathfrak h,1)_R$ that are trivialized by  $\hS$ are in bijection with the pointed set $ H^1_{\operatorname{ct}} (\widehat {\mathbb Z}, \BAut(V(\mathfrak h, 1)(\widehat S)).$ Moreover, by continuity any such form is in fact trivialized by some $S_m,$ hence isotrivial. 
 
 In what follows we will denote by $F$ the group $ \mathbf O(\mathfrak h)(k)$ with trivial action of $\hZ,$  and the $R-$group $\mathbf G_{a, R}^n$ by $\BG.$ Note that $H^1(\hZ, \mathbf O(\mathfrak h)^{\rm vac}(\hS)) = H^1(\hZ, F),$ and  that the action of $F$ on $\BG(\hS)$ is given by elements of $\BGL_n(k).$ Our relevant exact sequence of $H^1$ reads
     \begin{equation}\label{Hcoho2} 
  H^1_{\operatorname{ct}}(\hZ, \mathbf G(\hS)) \rightarrow H^1_{\operatorname{ct}}(\hZ, \mathbf G(\hS) \rtimes F) \rightarrow H^1_{\operatorname{ct}}(\hZ, F).   \end{equation}
  Since $H^1_{\operatorname{ct}}(\hZ, F)$ is in bijection with the conjugacy classes of elements of finite order of $\mathbf O(\mathfrak h)(k),$ our aim is to show that this last map is bijective. Surjectivity is clear. As for injectivity, let $u \in Z^1_{\operatorname{ct}}(\hZ, F). $ Recall that $u(\hat{1}) = \sigma$ is an element of finite order $m$ of $F.$  Following Serre \cite{Se} we construct the twisted $\hZ-$group  $\mathbf G(\hS)_u.$ The bijectivity of (\ref{Hcoho2}) that we are after would hold if we can show that for all $u$ as above the corresponding $H^1_{\operatorname{ct}}(\hZ, \mathbf G(\hS)_u)$ vanish. Now this last $H^1$ lies inside the isotrivial part of $H^1(R, \, {_u}\BG)$ where ${_u}\BG$ is the $R-$group constructed from $u$ by Galois descent. Since ${_u}\BG$ is a twisted form of $\BG$ and such forms  are classified (up to isomorphism) by $H^1(R, \BAut(\BG)) = H^1(R, \BGL_{n,R}) = 1,$ this last since every projective $R-$module of finite type  is free, we conclude that $\BG \simeq  {_u\BG}.$ Thus $H^1(R, {_u}\BG) = 1.$
  \end{proof}


\subsection{$R-$isomorphism versus $k-$isomorphism}

Let $U$ and $W$ be $R-$vertex algebras. Assume that $U$ and $W$ are isomorphic as $k-$vertex algebras. Then we may ask whether $U$ and $W$ are isomorphic as $R-$vertex algebras. 
In many cases, the structure map of an $R-$vertex algebra is in fact an isomorphism onto the centre. Suppose that this is so for $U$ and $W$. 
A $k-$isomorphism $\varphi \colon U \to W$ then induces a $k-$automorphism of $R$, but the automorphism group of $R = (k[t^{\pm 1}], \frac{d}{dt})$ is trivial. It follows that $\varphi$ is $R-$linear.

For example, if $V$ is a $k-$vertex algebra with $C(V) \simeq k$, then the loop vertex algebras of $V$ have centre $R$. We conclude that 
\[ L(V,g) \simeq_R L(V,h) \iff L(V,g) \simeq_k L(V,h) \]
for any two finite order automorphisms $g$ and $h$ of $V$.

\subsection{Pullback of twisted modules and a correspondence of Li} \label{twmodLi}

A classical result in Kac-Moody theory says that any twisted loop algebra associated to a finite order automorphism of a simple Lie algebra is in fact isomorphic to a twisted loop algebra coming from a Dynkin graph automorphism. In \cite{Li1, Li2}, Li shows through calculations that this principle carries over to twisted modules for the vertex operator algebras $L(\fg, \ell)$. This can be explained using Galois cohomology.

Fix a vertex algebra $V$ over an algebraically closed  field $k$ of characteristic 0. We shall use a shorthand notation for elementary tensors in the loop vertex algebra $L(V,g) = \bigoplus_{0 \leq r < M} V^{g,r} \otimes_k t^{\frac{r}{M}} k[t^{\pm 1}]$ associated to an automorphism $g \colon V \to V$ of period $M$ and a choice of primitive $M-$th root of unity $\zeta$.
Namely, we set $v_{(n)} = v \otimes t^n$, for $v \in V^{g,r}$ and $n \in \frac{r}{M} + \mathbb Z$.
We have the key formula
\begin{equation} \label{keyf} (u_{(m)})_\ell(v_{(n)}) = \sum_{i \geq 0} u_{\ell+i} v \otimes (t^m)_{-i-1} t^n = \sum_{i \geq 0} \binom{m}{i} (u_{\ell+i} v)_{(m+n-i)} \end{equation}
	for $u \in V^{g,r}$ and $v \in V^{g,s}$, $m \in \frac{r}{M} + \mathbb Z$ and $n \in \frac{s}{M} + \mathbb Z$, and $\ell \in \mathbb Z$.

The twisted loop vertex algebras $L(V,g)$ play a prominent role in the study of $g-$twisted $V-$modules as defined in \cite[Def.\@ 3.1]{DLM}.
Here is a concrete connection.

\begin{prop}\label{tfiso}
    Let $h \colon V \to V$ be another automorphism of finite period $M$. Let $\varphi \colon L(V,g) \to L(V,h)$ be a $k[t^{\pm1}]-$vertex algebra map, and let $W$ be a weak $h-$twisted $V-$module with vertex operator map $Y_W(v,z) = \sum_{n \in \frac{1}{M} \mathbb Z} v_n z^{-n-1}$. 
    Denote by $\overline \varphi$ the linear map $L(V,g) \to \End(W)$ obtained by composing $\varphi$ with $L(V,h) \to \End(W)$, $v_{(n)} \mapsto v_n$. 
    
    Then the linear map $(\varphi^\ast Y_W)(\cdot, z) \colon V \to \End(W)[[z^{\frac{1}{M}},z^{-\frac{1}{M}}]]$ defined by 
	   \[ (\varphi^\ast Y_W) (v,z) = \sum_{n \in \frac{r}{M} + \mathbb Z} \overline \varphi(v_{(n)}) z^{-n-1}, \quad v \in V^{g,r}, \]
    makes $W$ into a weak $g-$twisted $V-$module, which we denote simply by $\varphi^\ast W$.
\end{prop}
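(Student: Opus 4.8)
The plan is to verify that the candidate vertex operator map $(\varphi^\ast Y_W)(\cdot, z)$ satisfies the axioms of a weak $g$-twisted $V$-module in the sense of \cite[Def.\@ 3.1]{DLM}: the truncation condition, the vacuum axiom $(\varphi^\ast Y_W)(\mathbf 1, z) = \id_W$, and the twisted Jacobi identity (or equivalently the twisted Borcherds/commutator and associativity identities, whichever form is most convenient). The guiding principle is that each of these should be a formal consequence of the fact that $\varphi$ is a $k[t^{\pm1}]$-vertex algebra homomorphism and that $W$ is already a weak $h$-twisted $V$-module, once we translate everything through the key formula \eqref{keyf}.

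First I would unwind the definition of $\overline\varphi$. Since $\varphi$ preserves $n$-th products and the vacuum, and the map $L(V,h) \to \End(W)$ sending $v_{(n)} \mapsto v_n$ already has good multiplicative properties coming from the twisted module axioms for $W$, the composite $\overline\varphi$ is a $k[t^{\pm1}]$-linear map respecting all products in the sense inherited from $L(V,g)$. Concretely, using \eqref{keyf}, for $u \in V^{g,r}$, $v \in V^{g,s}$, the relation $\varphi\big((u_{(m)})_\ell v_{(n)}\big) = \varphi(u_{(m)})_\ell \varphi(v_{(n)})$ together with the way $Y_W$ realizes the $\ell$-th products of $L(V,h)$ on $\End(W)$ should translate directly into the statement that the generating series $(\varphi^\ast Y_W)(u,z)$ and $(\varphi^\ast Y_W)(v,w)$ satisfy the twisted commutator formula. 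The grading $\varphi(V^{g,r}\otimes t^{\frac rM}k[t^{\pm1}]) \subset V^{h,?}\otimes t^{?}k[t^{\pm1}]$ forced by $k[t^{\pm1}]$-linearity is what guarantees that the exponents of $z$ in $(\varphi^\ast Y_W)(v,z)$ for $v \in V^{g,r}$ land in $\tfrac rM + \mathbb Z$, i.e.\ that $\varphi^\ast W$ is $g$-twisted rather than $h$-twisted; I would spell this compatibility out first, as it is the conceptual heart of the statement.

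Next I would check the two easy axioms: truncation follows because for each fixed $w \in W$ and $v \in V$, the coefficients $\overline\varphi(v_{(n)})w = \varphi(v_{(n)})$ acting on $w$ vanish for $n \gg 0$ — this is just the truncation condition for $Y_W$ pulled back along $\varphi$, using that $\varphi$ sends $v_{(n)}$ to a finite sum of elements $w_{(n')}$ with $n' \to +\infty$ as $n \to +\infty$. The vacuum axiom holds because $\varphi(\mathbf 1) = \mathbf 1 = \mathbf 1_{(0)}$ and $Y_W(\mathbf 1, z) = \id_W$. For the twisted Jacobi identity I would follow the standard argument: the Jacobi identity for a $g$-twisted module is equivalent, in the presence of the grading, to the twisted commutator formula plus (weak) associativity, and both can be obtained from the corresponding identities for $Y_W$ as an $h$-twisted module by applying $\overline\varphi$ and invoking \eqref{keyf} to match the $\binom{m}{i}$-coefficients on both sides.

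The main obstacle I expect is the bookkeeping in the twisted Jacobi/associativity identity: one must be careful that applying $\overline\varphi$ commutes with the infinite sums appearing in \eqref{keyf} and in the Jacobi identity (this is fine since everything is truncated and $\overline\varphi$ is linear), and that the binomial coefficients $\binom mi$ arising from the loop-algebra structure on the $L(V,g)$ side are exactly the ones demanded by the twisted module axioms on the $L(V,h)$ side — in other words, that the ``same'' formal-calculus identity governs both. Since $\varphi$ is required only to be a $k[t^{\pm1}]$-vertex algebra map (not induced by an automorphism of $V$), one cannot simply transport structure; the verification genuinely runs through \eqref{keyf}. I would organize the computation so that the twisted Jacobi identity for $\varphi^\ast W$ is reduced in one move to that for $W$ composed with the homomorphism property of $\varphi$, keeping the formal-variable manipulations to the minimum needed to see the exponent shifts by $\tfrac rM$ land correctly.
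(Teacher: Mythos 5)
Your proposal matches the paper's proof in both structure and substance: you verify truncation via the $k[t^{\pm1}]$-linearity of $\varphi$ (which forces $\varphi(v_{(n+\ell)})$ to be the uniform shift of $\varphi(v_{(n)})$), the vacuum axiom via $\varphi(\mathbf 1_{(0)})=\mathbf 1_{(0)}$, and the twisted Jacobi identity by combining the key formula \eqref{keyf}, the homomorphism property of $\varphi$, and the twisted Jacobi identity for $Y_W$ — exactly the ingredients the paper cites before declaring the rest mechanical. No gaps; this is the same argument.
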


\begin{proof}
	For $u \in V^{g,r}$, $m \in \frac{r}{M} + \mathbb Z$, we write $\varphi(u_{(m)}) = \sum_j u^j_{(m_j)}$ for some $u^j \in V^{h, r_j}$ and $m_j \in \frac{r_j}{M} + \mathbb Z$, and observe then that $\varphi (u_{(m+\ell)}) = \sum_j u^j_{(m_j+\ell)}$ follows for any $\ell \in \mathbb Z$ from $k[t^{\pm 1}]-$linearity.
	The regularity axiom follows, and so does the vacuum axiom, since $\varphi(\mathbf 1_{(0)}\!) = \mathbf 1_{(0)}$.
	Verification of the Jacobi identity  for $\varphi^\ast Y_W$ additionally uses \eqref{keyf}, the twisted Jacobi identity for $Y_W$, and that $\varphi$ is a homomorphism, but is otherwise mechanical.
\end{proof}

\begin{rem}
    The above correspondence satisfies some basic categorical compatibilities. 
    For example, if $W_1 \to W_2$ is a map of weak $h-$twisted $V-$modules, then the same set map is a map of weak $g-$twisted $V-$modules $\varphi^\ast W_1 \to \varphi^\ast W_2$. 
    This makes $\varphi^\ast(-)$ into an additive functor from the category of weak $h-$twisted modules to the category of weak $g-$twisted modules.

    If $p \colon V \to V$ is another finite order automorphism, and $\psi \colon L(V,h) \to L(V,p)$ is another $k[t^{\pm 1}]-$vertex algebra map, then for any weak $p-$twisted $V-$module $U$, we have $(\psi \varphi)^\ast U = \varphi^\ast \psi^\ast U$.
    From this it follows that if $\varphi$ is an isomorphism, then $\varphi^\ast (-)$ is an isomorphism of categories.

	We  postpone further discussion of this correspondence to a future paper.
\end{rem}

\noindent Li's correspondence is corollary.

\begin{cor}[Cf.\@ Section 5 of \cite{Li1}]
    Let $\mathfrak g$ be a finite dimensional simple $k-$Lie algebra, and let $\ell \in k \setminus \{ -h^\vee\}$, where $h^\vee$ of $\fg$ is the dual Coxeter number of $\fg.$
    Let $g$ and $h$ be two finite order automorphisms of $\mathfrak g$ whose images in the outer automorphism group $\Out(\fg)$ are conjugate.

    Then, viewing $g$ and $h$ as automorphisms of the simple conformal $k-$vertex algebra $L(\fg, \ell)$, we have a functorial one-to-one correspondence between weak $g-$twisted and weak $h-$twisted modules.
\end{cor}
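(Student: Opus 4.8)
The plan is to deduce the statement from Proposition \ref{tfiso} together with the remark that follows it: if $V$ is a $k$-vertex algebra and $\varphi \colon L(V,g) \to L(V,h)$ is an isomorphism of $k[t^{\pm 1}]$-vertex algebras, then the pullback $\varphi^\ast(-)$ is an isomorphism of categories from weak $h$-twisted $V$-modules to weak $g$-twisted $V$-modules (with inverse $(\varphi^{-1})^\ast$). Taking $V = L(\fg,\ell)$ — the simple quotient, which is well defined since $\ell \neq -h^\vee$ — it therefore suffices to produce an $R$-vertex algebra isomorphism $L(L(\fg,\ell),g) \simeq L(L(\fg,\ell),h)$, where $R = k[t^{\pm 1}]$.

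First I would reduce to the case that $g$ and $h$ share a common finite period $M$: replacing $M$ by a common multiple of the periods of $g$ and $h$ changes neither $L(V,g)$ nor $L(V,h)$ up to $R$-vertex algebra isomorphism (the eigenspaces for the discarded eigenvalues vanish, and one re-indexes the grading). Then $L(L(\fg,\ell),g)$ and $L(L(\fg,\ell),h)$ are both filtered $\widehat S/R$-twisted forms of $L(\fg,\ell)\otimes_k R$: each is trivialized by $S_M \subset \widehat S$ (Section \ref{loopva}), and the filtration induced from that of $L(\fg,\ell)$ satisfies the required compatibility. By Theorem \ref{Lforms} the $R$-isomorphism classes of such forms are in bijection with the conjugacy classes of $\Out(\fg)$, so it is enough to show that $L(L(\fg,\ell),g)$ and $L(L(\fg,\ell),h)$ lie in the same class.

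To identify that class, I would track the cocycle of $L(V,g)$, $V = L(\fg,\ell)$, through the isomorphisms $\BAut(V_R) \simeq \BAut(\fg)^\infty$ (Theorem \ref{affheis}, valid for the simple quotient since automorphisms preserve the maximal ideal) and $H^1(R,\BAut(\fg)^\infty) \simeq H^1(R,\BAut(\fg_R))$ (Proposition \ref{tor3}). As in the cocycle argument in the proof of the first theorem of Section \ref{app}, the functor $\gL$ carries $L(V,g)$ to $\gL(L(V,g)) \simeq L(\fg,g)$ as $R$-Lie algebras — here one uses that $\gL$ is compatible with base change (Lemma \ref{Lie}) and with passage to Galois-fixed subalgebras, invariants under the cyclic group $\mathbb Z/M\mathbb Z$ being exact because $M$ is invertible in $k$ — so the class of $L(V,g)$ maps to the class of the Kac-Moody loop algebra $L(\fg,g)$. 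By the main theorem of \cite{Pianzola2}, the latter corresponds to the conjugacy class of the image $\bar g$ of $g$ in $\Out(\fg)$. Since the classification map of Theorem \ref{Lforms} is a bijection, $L(V,g)$ and $L(V,h)$ have the same class exactly when $\bar g$ and $\bar h$ are conjugate — the hypothesis. This produces the desired isomorphism $\varphi$, whence $\varphi^\ast(-)$ furnishes the asserted functorial one-to-one correspondence; functoriality on morphisms and the fact that it is an isomorphism of categories are recorded in the remark following Proposition \ref{tfiso}.

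The main obstacle is precisely the identification in the previous paragraph: verifying that the loop vertex algebra $L(L(\fg,\ell),g)$ corresponds, under the cohomological dictionary, to the same conjugacy class of $\Out(\fg)$ as the Lie-theoretic loop algebra $L(\fg,g)$. This rests on $\BAut(L(\fg,\ell)) \simeq \BAut(\fg)^\infty$ and on the naturality of $\gL$ with respect to both base change and passage to fixed points; granted these, the remaining steps — the reduction to a common period and the categorical compatibilities of $\varphi^\ast$ — are routine consequences of the machinery already in place.
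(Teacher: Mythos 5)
Your proposal is correct and follows essentially the same route as the paper, whose proof is simply to invoke Theorem \ref{Lforms} (to get an $R$-isomorphism $L(L(\fg,\ell),g)\simeq L(L(\fg,\ell),h)$ from the conjugacy of $\bar g$ and $\bar h$ in $\Out(\fg)$) together with Proposition \ref{tfiso} (to pull back twisted modules along that isomorphism). The extra bookkeeping you supply — reduction to a common period and the cocycle computation identifying the class of $L(L(\fg,\ell),g)$ with that of the Lie-theoretic loop algebra $L(\fg,g)$ — is exactly the detail the paper leaves implicit (it performs the analogous cocycle identification in the proof of the preceding theorem for $V(\fg,\ell)$).
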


\begin{proof}
    This follows from Theorem \ref{Lforms} together with Proposition \ref{tfiso}.
\end{proof}

\bibliographystyle{amsplain}
\bibliography{refs}

\small{	\noindent Mathematisches Institut, Ludwig-Maximilians-Universität München, Theresienstr.\@ 39, 80333 München, Germany; and \\ 
Department of Mathematical and Statistical Sciences, University of Alberta, Edmonton, Alberta T6G 2G1, Canada, \\
\textit{e-mail}: \textsf{rmader@ualberta.ca}

\medskip
\noindent Department of Mathematical and Statistical Sciences, University of Alberta, Edmonton, Alberta T6G 2G1, Canada, \\
\textit{e-mail}: \textsf{tjgannon@ualberta.ca}

\medskip
\noindent Department of Mathematical and Statistical Sciences, University of Alberta, Edmonton, Alberta T6G 2G1, Canada, \\
\textit{e-mail}: \textsf{a.pianzola@ualberta.ca}
}

\end{document}